\numberwithin{equation}{section}
\newtheorem{Theorem}{Theorem}[section]
\newtheorem{Lemma}{Lemma}[section]
\newtheorem{Corollary}{Corollary}[section]
\newtheorem{Proposition}{Proposition}[section]
\theoremstyle{definition}
\theoremstyle{remark}
\newtheorem{Example}{Example}[section]
\newcommand{\esssup}{\mathop{\rm ess \, sup}\limits}
\newcommand{\mes}{\mathop{\rm mes}\nolimits}
\newcommand{\diver}{\mathop{\rm div}\nolimits}
\author{Andrej A. Kon'kov}
\address{Department of Differential Equations,
Faculty of Mechanics and Mathematics,
Mo\-s\-cow Lo\-mo\-no\-sov State University,
Vorobyovy Gory,
119992 Moscow, Russia}
\email{konkov@mech.math.msu.su}
\title[]{On comparison theorems for elliptic inequalities}
\thanks{The research was supported by RFBR, grant 09-01-12157.}
\keywords{Nonlinear elliptic operators,  Unbounded domains}
\date{}
\begin{document}

\maketitle

\section{Introduction}

Suppose that $\Omega$ is a non-empty open subset of ${\mathbb R}^n$, $n \ge 2$.
Let us denote:
$
	\Omega_{R_0, R_1}
	=
	\{
		x \in \Omega : R_0 < |x| < R_1
	\}
$
and
$
	\Gamma_{R_0, R_1}
	=
	\{
		x \in \partial \Omega : R_0 < |x| < R_1
	\}.
$
By $B_r^x$ we mean the open ball in ${\mathbb R}^n$ 
of radius $r > 0$ and center at a point $x$.
Also put 
$S_r^x = \partial B_r^x$.
In the case of $x = 0$, we write $B_r$ and $S_r$
instead of $B_r^0$ and $S_r^0$, respectively.

Consider the inequality
\begin{equation}
	\diver A (x, D u)
	\ge
	F (x, u, D u)
	\quad
	\mbox{in }
	\Omega_{R_0, R_1},
	\;
	0 \le R_0 < R_1 \le \infty,
	\label{1.1}
\end{equation}
where
$D = (\partial / \partial x_1,\ldots,\partial / \partial x_n)$
is the gradient operator and
$A : \Omega_{R_0, R_1} \times {\mathbb R}^n \to {\mathbb R}^n$
is s measurable function such that
$$
	C_1
	|\xi|^p
	\le
	\xi
	A (x, \xi),
	\quad
	|A (x, \xi)|
	\le
	C_2
	|\xi|^{p-1}
$$
with some constants
$C_1 > 0$,
$C_2 > 0$,
and
$p > 1$
for almost all
$x \in \Omega_{R_0, R_1}$
and for all
$\xi \in {\mathbb R}^n$.
We say that $u$ is a solution of~\eqref{1.1} if
$
	u
	\in
	W_p^1 (
		\Omega_{R_0, r}
	)
	\cap
	L_\infty (
		\Omega_{R_0, r}
	),
$
$
	A (x, D u)
	\in
	L_{p/(p-1)} (
		\Omega_{R_0, r}
	),
$
and
$
	F (x, u, D u)
	\in
	L_{p/(p-1)} (
		\Omega_{R_0, r}
	)
$
for any real number $r \in (R_0, R_1)$ and, moreover,
$$
	- \int_{
		\Omega_{R_0, R_1}
	}
	A (x, D u)
	D \varphi
	\, dx
	\ge
	\int_{
		\Omega_{R_0, R_1}
	}
	F (x, u, D u)
	\varphi
	\, dx
$$
for any non-negative function
$
	\varphi 
	\in 
	C_0^\infty (
		\Omega_{R_0, R_1}
	)
$~\cite{LU}.
In so doing, the condition
\begin{equation}
	\left.
		u
	\right|_{
		\Gamma_{R_0, R_1}
	}
	=
	0
	\label{1.3}
\end{equation}
means that
$
	\varphi u
	\in
	{
		\stackrel{\rm \scriptscriptstyle o}{W}\!\!{}_p^1
		(
			\Omega_{R_0, R_1}
		)
	}
$
for any
$
	\varphi
	\in
	C_0^\infty 
	(
		B_{R_0, R_1}
	),
$
where
$
	B_{R_0, R_1}
	=
	\{
		x \in {\mathbb R}^n : R_0 < |x| < R_1
	\}.
$
In particular, if $\Omega = {\mathbb R}^n$,
then~\eqref{1.3} is fulfilled for all
$
	u
	\in
	W_{p, loc}^1
	(
		B_{R_0, R_1}
	).
$

Throughout this paper, we assume that
$S_r \cap \Omega \ne \emptyset$
for any $r \in (R_0, R_1)$.
Let $u$ be a solution of~\eqref{1.1}, \eqref{1.3}. 
Put
\begin{equation}
	M (r; u)
	=
	\esssup_{
		S_r
		\cap
		\Omega
	}
	u,
	\quad
	r \in (R_0, R_1),
	\label{1.4}
\end{equation}
where the restriction of $u$ to
$
	S_r
	\cap
	\Omega
$
is understood in the sense of the trace and
the $\esssup$ in the right-hand side of~\eqref{1.4} is with respect to
$(n-1)$-dimensional Lebesgue measure on $S_r$.
We also assume that the right-hand side of inequality~\eqref{1.1} 
satisfies the following condition:
there exist a real number $\sigma > 1$ and locally bounded measurable functions 
$f : [R_0, R_1) \times (0, \infty) \to [0,\infty)$
and
$b : [R_0, R_1) \to [0, \infty)$
such that
$$
	f (r, t-0) = f (r, t)
	\quad
	\mbox{for all } R_0 < r < R_1, \:  t > 0,
$$
$$
	f (r, t_1) \ge f (r, t_2)
	\quad
	\mbox{for all } R_0 < r < R_1, \: t_1 \ge t_2 > 0
$$
and, moreover,
\begin{equation}
	F (x, t, \xi)
	\ge
	\sup_{
		r
		\in
		(|x| / \sigma, \sigma |x|)
		\cap
		(R_0, R_1)
	}
	f (r, t)
	-
	|\xi|^{p - 1}
	\inf_{
		r
		\in
		(|x| / \sigma, \sigma |x|)
		\cap
		(R_0, R_1)
	}
	b (r)
	\label{1.5}
\end{equation}
for almost all
$x \in \Omega_{R_0, R_1}$
and for all
$t \in (0, \infty)$
and
$\xi \in {\mathbb R}^n$.

The questions studied in this article were earlier investigated by a number
of authors~\cite{Keller}--\cite{Me}, \cite{MP}--\cite{VeronBook}.
Our aim is to estimate the function $M (\cdot; u)$
by a solution of an ordinary differential equation,
which contains the radial $p$-Laplace operator with the lowest terms.

\section{Main results}\label{S2}

\begin{Theorem}\label{T2.1}
Let $u$ be a non-negative solution of problem~\eqref{1.1}, \eqref{1.3}
such that ${M (\cdot; u)}$ is a non-decreasing function on the
interval $(R_0, R_1)$ with
\begin{equation}
	M (R_0 + 0; u) > 0.
	\label{T2.1.1}
\end{equation}
Then for all real numbers $a > p - 2$ and $k > 0$ 
there exist constants $\alpha > 0$ and $\beta > 0$
depending only on
$n$, $p$, $a$, $k$, $\sigma$, $C_1$, and $C_2$
such that the Cauchy problem
\begin{equation}
	\frac{1}{r^{1+a}}
	\frac{d}{dr}
	\left(
		r^{1+a}
		\left|
			\frac{dm}{dr}
		\right|^{p-2}
		\frac{dm}{dr}
	\right)
	+
	k
	b (r)
	\left|
		\frac{dm}{dr}
	\right|^{p-2}
	\frac{dm}{dr}
	=
	\alpha
	f (r, \beta m),
	\label{T2.1.2}
\end{equation}
\begin{equation}
	m (R_0) = M (R_0+0; u),
	\quad
	m' (R_0) = 0,
	\label{T2.1.3}
\end{equation}
has a solution on $[R_0, R_1)$ satisfying the estimate
$$
        M (r; u)
        \ge
        m (r)
        >
        0
$$
for any $r \in (R_0, R_1)$.
\end{Theorem}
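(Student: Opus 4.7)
My plan is to show that the spherical maximum function $\mu(r) := M(r; u)$ is itself a supersolution, in an appropriate sense on $(R_0, R_1)$, of the differential equation in \eqref{T2.1.2} with constants matching those in the theorem, and then to obtain $m$ as a solution of the Cauchy problem \eqref{T2.1.2}--\eqref{T2.1.3} lying below $\mu$ by a sub/super-solution comparison.

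For the PDE-to-ODE reduction, fix $R_0 < r_1 < r_2 < R_1$ and $\tau \in (0, \mu(r_1))$ and test the weak form of \eqref{1.1} against $\varphi(x) = \zeta(|x|)^p (u(x) - \tau)_+^{p-1}$, where $\zeta \in C_0^\infty((r_1, r_2))$ is a non-negative radial cutoff; the boundary condition \eqref{1.3} makes such $\varphi$ admissible after a routine approximation. Expanding $D\varphi$, using the ellipticity and growth bounds on $A$ with Young's inequality to absorb the gradient cross-terms, and invoking the lower bound \eqref{1.5} together with the monotonicity of $f(r,\cdot)$, one obtains a weighted Caccioppoli-type inequality on $\Omega_{r_1, r_2}$. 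A Moser-type iteration on a geometrically decreasing family of sub-annuli then upgrades averaged integral estimates into $L^\infty$ estimates on a single sphere; the condition $\sigma > 1$ in \eqref{1.5} is precisely what lets the range $(|x|/\sigma, \sigma|x|)$ appearing in the $\sup$/$\inf$ match the support of $\zeta$. Optimising the test-function exponents and shrinking $r_2 - r_1$ yields, for a.e.\ $r \in (R_0, R_1)$, the pointwise differential inequality
$$
    \frac{1}{r^{1+a}} \frac{d}{dr} \left( r^{1+a} |\mu'|^{p-2} \mu' \right) + k\, b(r) |\mu'|^{p-2} \mu' \ge \alpha\, f(r, \beta\, \mu),
$$
with $\alpha, \beta > 0$ depending only on $n, p, a, k, \sigma, C_1, C_2$.

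The Cauchy problem \eqref{T2.1.2}--\eqref{T2.1.3}, rewritten as a first-order system in $(m, v)$ with $v := r^{1+a} |m'|^{p-2} m'$, admits a local solution near $R_0$ by classical ODE theory. Since $\mu$ is a supersolution with $\mu(R_0+0) = m(R_0)$, a routine comparison using the strict monotonicity of $t \mapsto |t|^{p-2} t$ and the monotonicity of $f(r, \cdot)$ yields $m \le \mu$ on the interval where $m$ exists; local boundedness of $f$ and $b$ together with $\mu < \infty$ on compact subsets of $(R_0, R_1)$ then prevents $m$ from blowing up, so $m$ extends to all of $[R_0, R_1)$, while positivity is preserved from \eqref{T2.1.1} by the non-negativity of $f$. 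The principal difficulty is executing the Moser iteration carefully enough to produce \emph{exactly} the prescribed coefficient $k$ in front of $b(r)|\mu'|^{p-2}\mu'$ and to absorb every other error term into a single constant $\alpha$ multiplying $f(r, \beta\mu)$; this is achieved by tracking the number of iteration steps and the widths of the concentric shells as explicit functions of $k$ and the structural parameters.
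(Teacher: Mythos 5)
Your proposal takes a genuinely different route from the paper, and it contains gaps that I don't think can be repaired without essentially redoing the paper's argument. The paper does not try to show that $\mu(r) = M(r;u)$ is a supersolution of the ODE~\eqref{T2.1.2}; instead it proves the \emph{integral} inequality of Theorem~\ref{T2.2} (the technical heart of the paper, via Lemmas~\ref{L3.1}--\ref{L3.5} and Section~\ref{S4}), and then derives Theorem~\ref{T2.1} almost for free: one runs a monotone Picard iteration $m_0 \equiv M(R_0+0;u)$, $m_i(r) = M(R_0+0;u) + \int_{R_0}^r(\ldots f(\xi,\beta m_{i-1}(\xi))\,d\xi)^{1/(p-1)}\,dt$, uses~\eqref{T2.2.1} and the monotonicity of $f(r,\cdot)$ to show $m_{i-1} \le m_i \le M(\cdot;u)$, passes to the monotone limit, and differentiates the resulting integral equation. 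Existence of the Cauchy solution on all of $[R_0,R_1)$ and the estimate $m \le M(\cdot;u)$ are built in simultaneously.

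The main gap in your approach is the claimed pointwise differential inequality for $\mu$. The function $M(\cdot;u)$ is only assumed to be non-decreasing; it need not be absolutely continuous, let alone twice (or even once) differentiable, so the expression $\frac{1}{r^{1+a}}\frac{d}{dr}(r^{1+a}|\mu'|^{p-2}\mu') + k b(r)|\mu'|^{p-2}\mu'$ has no a priori meaning. One would have to recast ``$\mu$ is a supersolution'' in a weak or viscosity sense compatible with the degenerate $p$-Laplace-type operator, and then the sub/super-solution comparison you invoke is itself nontrivial. Moreover, the step from a Caccioppoli inequality plus Moser iteration to \emph{any} one-dimensional differential inequality for the spherical maximum is precisely what the paper spends Sections~\ref{S2}--\ref{S4} establishing (in integral form); Moser iteration by itself yields $L^\infty$ bounds on balls, not an ODE for $r \mapsto M(r;u)$, and the geometric/combinatorial bookkeeping needed to convert one into the other (spanning the gap between $r_0$ and $r_1$, controlling the $\sigma$-windows in~\eqref{1.5}, tracking the drift $b$) is the real content of the proof. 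Two further issues: (i) ``classical ODE theory'' does not give local existence for~\eqref{T2.1.2}--\eqref{T2.1.3} because the initial slope is zero, the nonlinearity $|m'|^{p-2}m'$ is degenerate there, and $f(r,\cdot)$ is only assumed left-continuous and non-decreasing, not Lipschitz; (ii) it is not clear that Moser iteration can be tuned to produce the coefficient $k b(r)$ for \emph{arbitrary} $k>0$ — absorbing in one direction ($k$ large) is free, but making the drift coefficient small is not — whereas the paper's integral formulation handles all $k>0$ uniformly by adjusting $\alpha,\beta$.
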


\begin{Theorem}\label{T2.2}
Under the hypotheses of Theorem~$\ref{T2.1}$, for all real numbers 
$a > p -2$ and $k > 0$ there exist constants $\alpha > 0$ and $\beta > 0$
depending only on $n$, $p$, $a$, $k$, $\sigma$, $C_1$, and $C_2$
such that
\begin{align}
	&
	M (r; u) - M (R_0 + 0; u)
	\nonumber
	\\
	&
	\quad
	\ge
	\int_{R_0}^r
	dt
	\,
	\left(
		\frac{
			\alpha
		}{
			t^{1+a}
		}
		\int_{R_0}^t
		\xi^{1+a}
		e^{
			- k
			\int_\xi^t
			b (\zeta)
			\,
			d\zeta
		}
		f (\xi, \beta M (\xi; u))
		\,
		d\xi
	\right)^{1 / (p - 1)}
	\label{T2.2.1}
\end{align}
for any $r \in (R_0, R_1)$.
\end{Theorem}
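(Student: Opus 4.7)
The plan is to integrate the Cauchy problem~\eqref{T2.1.2}--\eqref{T2.1.3} supplied by Theorem~\ref{T2.1} and then to transfer the resulting integral identity for $m$ to the inequality~\eqref{T2.2.1} for $M(\cdot\,;u)$ via the pointwise comparison $M\ge m$ together with the monotonicity of $f$.

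Fix $a>p-2$ and $k>0$, and let $m$ be the solution furnished by Theorem~\ref{T2.1}, so that $m(R_0)=M(R_0+0;u)$, $m'(R_0)=0$, and $M(r;u)\ge m(r)>0$ on $[R_0,R_1)$. The decisive observation is that the left-hand side of~\eqref{T2.1.2} is linear in the ``momentum'' variable $w(r):=r^{1+a}|m'(r)|^{p-2}m'(r)$, so that~\eqref{T2.1.2} reads
\begin{equation*}
w'(r)+k\,b(r)\,w(r)\;=\;\alpha\,r^{1+a}\,f(r,\beta m(r)),\qquad w(R_0)=0.
\end{equation*}
Multiplying by the integrating factor $\exp\bigl(k\int_{R_0}^{r}b(\zeta)\,d\zeta\bigr)$ and integrating from $R_0$ to $t$ yields
\begin{equation*}
w(t)\;=\;\alpha\int_{R_0}^{t}\xi^{1+a}\,\exp\!\Bigl(-k\int_{\xi}^{t}b(\zeta)\,d\zeta\Bigr)\,f(\xi,\beta m(\xi))\,d\xi.
\end{equation*}
Since $f\ge 0$ we have $w\ge 0$, and hence $m'\ge 0$; extracting $m'$ via the $(p-1)$th root, dividing by $t^{1+a}$, and integrating once more from $R_0$ to $r$ delivers precisely the right-hand side of~\eqref{T2.2.1}, but with $m(\xi)$ in place of $M(\xi;u)$ inside $f$ and with $m(r)-M(R_0+0;u)$ on the left.

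The remaining step---upgrading $m(\xi)$ to $M(\xi;u)$ in the argument of $f$---is where I expect the real work. Because of the prescribed monotonicity $f(r,t_1)\ge f(r,t_2)$ for $t_1\ge t_2$, the inequalities $M\ge m$ and $M(r;u)-M(R_0+0;u)\ge m(r)-m(R_0)$ do not combine directly to produce~\eqref{T2.2.1}: both of these are lower bounds by $m(r)-m(R_0)$, not by the larger integral featuring $f(\,\cdot\,,\beta M(\,\cdot\,;u))$. The most plausible route is therefore to revisit the proof of Theorem~\ref{T2.1} and observe that it in fact establishes~\eqref{T2.2.1} \emph{for $M(\cdot\,;u)$ itself} as an intermediate step, before the ODE comparison is invoked to produce the classical solution $m$. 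Concretely, the underlying weak differential inequality for $M(\cdot\,;u)$ of the form appearing on the left of~\eqref{T2.1.2} (with the right-hand side $\alpha f(r,\beta M(r;u))$), extracted from~\eqref{1.1}, \eqref{1.5}, and the boundary condition~\eqref{1.3} by means of radial test functions and the ellipticity bounds on $A$, can be integrated by exactly the two-step procedure above---first via the integrating factor, then via the $(p-1)$th root---to furnish~\eqref{T2.2.1}. Justifying this weak differential inequality (and in particular the initial behaviour of the ``momentum'' at $r=R_0$) is the essential obstacle; the rest is formal.
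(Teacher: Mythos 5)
Your proposal contains a genuine gap, and in fact two problems stack on top of each other.

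First, the initial plan is circular. In the paper the implications run strictly from Theorem~\ref{T2.2} to Theorem~\ref{T2.1}: the proof of Theorem~\ref{T2.1} opens with ``Assume that Theorem~\ref{T2.2} is already proved'' and then constructs the iterates $m_i$, each of which is bounded above by $M(\,\cdot\,;u)$ precisely because of~\eqref{T2.2.1}. So appealing to Theorem~\ref{T2.1} (and its solution $m$) while proving Theorem~\ref{T2.2} begs the question. You do, to your credit, notice that even ignoring circularity the transfer from $m$ to $M$ breaks: since $M\ge m$ and $f$ is nondecreasing in its second argument, both $M(r;u)-M(R_0+0;u)$ and the target integral with $f(\xi,\beta M(\xi;u))$ are merely bounded \emph{below} by $m(r)-m(R_0)$, which does not compare them to each other. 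That observation is correct and is the reason the direction cannot be reversed this way.

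Second, the fallback you sketch --- extracting a weak ODE inequality of the form
$r^{-1-a}\bigl(r^{1+a}|M'|^{p-2}M'\bigr)'+kb\,|M'|^{p-2}M'\ge\alpha f(r,\beta M)$ for the spherical maximum $M(\,\cdot\,;u)$ by ``radial test functions and ellipticity'' --- is precisely the step the paper works hard to avoid, because it does not hold (or is not accessible) for nonlinear operators. The spherical maximum of a weak solution of~\eqref{1.1} is in general not absolutely continuous, let alone a weak supersolution of a one-dimensional equation. What the paper does instead is establish \emph{discrete, two-radius} lower bounds $M(r_1;u)-M(r_0;u)\ge\cdots$ under a mass-growth condition $\beta^{1/2}M(r_1;u)\le M(r_0;u)$ (Lemma~\ref{L3.1}, proved in Section~\ref{S4} via Moser-type iteration, Lemma~\ref{L4.4} and Lemma~\ref{L4.5}), then telescopes these over a dyadic-in-$\sigma$ partition (Lemma~\ref{L3.2}), handles the non-local $\int_{R_0}^t$ integral via further splittings (Lemmas~\ref{L3.3}--\ref{L3.5} and the explicit choice of $\alpha,\beta$ preceding Lemma~\ref{L3.4}), and finally concludes~\eqref{T2.2.1} by induction on the minimal $N$ with $M(R_0+0;u)\ge\beta^{N/2}M(r;u)$. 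None of that machinery is a formality that ``remains''; it is the proof. As written, your proposal identifies the obstacle but does not surmount it.
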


\begin{Example}\label{E2.1}
Consider the inequality
$$
	\sum_{i,j=1}^n 
	\frac{\partial}{\partial x_i}
	\left(
		a_{ij} (x)
		\frac{\partial u}{\partial x_j}
	\right)
	+
	\sum_{i=1}^n
	b_i (x)
	\frac{\partial u}{\partial x_i}
	\ge
	c (x, u)
$$
for a linear uniformly elliptic operator with locally bounded measurable coefficients.
Setting $p = 2$ and
$$
	F (x, t, \xi) 
	= 
	c (x, t) 
	- 
	\sum_{i = 1}^n
	b_i (x)
	\xi_i,
$$
one can show that relation~\eqref{1.5} is filfilled 
if $b$ and $f$ are non-negative functions
such that
$$
	b(r)
	\ge
	\sup_{
		x
		\in
		\Omega_{r / \sigma, r \sigma}
		\cap
		\Omega_{R_0, R_1}
	}
	\sum_{i=1}^n
	|b_i (x)|
	\quad
	\mbox{for all } r \in (R_0, R_1)
$$
and
$$
	f(r,t)
	\le
	\inf_{
		x 
		\in
		\Omega_{r / \sigma, r \sigma}
		\cap
		\Omega_{R_0, R_1}
	}
	c(x,t)
	\quad
	\mbox{for all } r \in (R_0, R_1), \: t \in (0,\infty).
$$
In this case, equation~\eqref{T2.1.2} takes the form
\begin{equation}
	\frac{
		d^2 m
	}{
		d r^2
	}
	+ 
	\left(
		\frac{
			1 + a
		}{
			r
		}
		+
		k
		b(r)
	\right)
	\frac{
		d m
	}{
		d r
	}
	=
	\alpha
	f(r, \beta m).
	\label{E2.1.1}
\end{equation}

Putting $a = n - 2$ and $k = 1$, we obviously obtain the radial part of the operator
$
 \triangle + b(|x|) D |x| D
$
in the left-hand side of~\eqref{E2.1.1}.
\end{Example}

\begin{proof}[Proof of Theorem~$\ref{T2.1}$]
Assume that Theorem~\ref{T2.2} is already proved.
Let us construct a sequence of maps
$m_i  : [R_0, R_1) \to (0, \infty)$
by setting
$m_0 (r) = M (R_0 + 0; u)$
and
$$
	m_i (r)
	=
	M (R_0+0; u)
	+
	\int_{R_0}^r
	dt
	\,
	\left(
		\frac{
			\alpha
		}{
			t^{1+a}
		}
		\int_{R_0}^t
		\xi^{1+a}
		e^{
			- k
			\int_\xi^t
			b (\zeta)
			\,
			d\zeta
		}
		f (\xi, \beta m_{i - 1} (\xi))
		\,
		d\xi
	\right)^{1 / (p - 1)}
$$
$i = 1,2,\ldots$.
We have
$M (r; u) \ge m_i (r) \ge m_{i-1} (r)$
for all $r \in (R_0, R_1)$,
$i = 1, 2, \ldots$.
Therefore, there exists a map
$m : [R_0, R_1) \to (0, \infty)$
such that $m_i$ tends to  $m$ everywhere on the interval $[R_0, R_1)$
as $i \to \infty$.

It is obvious that
${M (r; u)} \ge m (r)$
for all $r \in (R_0, R_1)$.
In addition, the following integral equation is valid:
$$
	m (r)
	=
	M (R_0+0; u)
	+
	\int_{R_0}^r
	dt
	\,
	\left(
		\frac{
			\alpha
		}{
			t^{1+a}
		}
		\int_{R_0}^t
		\xi^{1+a}
		e^{
			- k
			\int_\xi^t
			b (\zeta)
			\,
			d\zeta
		}
		f (\xi, \beta m (\xi))
		\,
		d\xi
	\right)^{1 / (p - 1)}
$$

Thus, to complete the proof it remains to verify by direct differentiation that
$m$ is a solution of problem~\eqref{T2.1.2}, \eqref{T2.1.3}.
\end{proof}

\section{Proof of Theorem~\ref{T2.2}}

From now on we assume that $a > p - 2$ and $k > 0$ are some fixed real numbers 
and $u \ge 0$ is a solution of problem~\eqref{1.1}, \eqref{1.3} such that 
${M (\cdot; u)}$ is a non-decreasing function on the interval $(R_0, R_1)$
satisfying condition~\eqref{T2.1.1}.
Without loss of generality it can also be assumed that 
$$
	\inf_{
		(R_0, R_1)
	}
	b 
	> 
	0;
$$
otherwise we prove~\eqref{T2.2.1} with $b$ replaced by 
$b + \delta$, where $\delta$ is a positive real number, and let $\delta$
tend to zero afterwards.

From the maximum principle, it follows that
\begin{equation}
	M (r - 0; u) = M (r; u),
	\quad
	r \in (R_0, R_1),
	\label{3.1}
\end{equation}
(see Corollary~\ref{C4.1}, Section~\ref{S4}).

\begin{Lemma}\label{L3.1}
Let $0 < \beta < 1$, $R_0 < r_0 < r_1 < R_1$, and $\sigma^2 r_0 \ge r_1$.
If $\beta^{1/2} M (r_1; u) \le M (r_0; u)$, then
$$
	M (r_1; u) - M (r_0; u)
	\ge
	\gamma_1
	\min
	\left\{
		(r_1 - r_0)^{p / (p-1)},
		\frac{
			r_1 - r_0
		}{
			\lambda^{1 / (p - 1)}
		}
	\right\}
	f^{1 / (p-1)} (s, \beta M (r_1; u))
$$
for all $s \in [r_1 / \sigma, \sigma r_0] \cap (R_0, R_1)$,
where 
$$
	\lambda
	=
	\inf_{
		[r_1 / \sigma, \sigma r_0]
		\cap
		(R_0, R_1)
	}
	b
$$
and the constant $\gamma_1 > 0$ depends only on
$n$, $p$, $C_1$, $C_2$, and $\beta$.
\end{Lemma}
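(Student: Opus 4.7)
The plan is to test the weak formulation of~\eqref{1.1} against a compactly supported function on the annulus $\Omega_{r_0, r_1}$ and extract a lower bound on $K := M (r_1; u) - M (r_0; u)$.

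Writing $M_j := M (r_j; u)$, I would first collect preliminary facts. Since $0 < \beta < 1$ gives $\beta < \beta^{1/2}$, the hypothesis yields $M_0 > \beta M_1$; consequently on $\{u > M_0\}$ one has $u > \beta M_1$, and by monotonicity of $f (r, \cdot)$, $f (r, u) \ge f (r, \beta M_1)$. The geometric condition $\sigma^2 r_0 \ge r_1$ implies $[r_1/\sigma, \sigma r_0] \subset (|x|/\sigma, \sigma |x|)$ for every $x \in \Omega_{r_0, r_1}$, so~\eqref{1.5} gives $\inf_{(|x|/\sigma, \sigma |x|)} b \le \lambda$ and $\sup_{(|x|/\sigma, \sigma |x|)} f (\cdot, t) \ge f (s, t)$ for any $s \in [r_1/\sigma, \sigma r_0] \cap (R_0, R_1)$. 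Finally, the maximum principle summarized by Corollary~\ref{C4.1} together with~\eqref{3.1} and the non-decreasing character of $M (\cdot; u)$ delivers $u \le M_1$ a.e.\ in $\Omega_{r_0, r_1}$, so $(u - M_0)_+ \le K$.

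I would then test~\eqref{1.1} with $\varphi = (u - M_0)_+ \eta^p$, where $\eta (x) = \psi (|x|)$ for $\psi \in C^1_0 ((r_0, r_1))$, $\psi \ge 0$. This $\varphi$ is admissible because the zero trace of $u$ on $\Gamma_{r_0, r_1}$ renders the trace of $\varphi$ there zero and $\eta$ vanishes near $S_{r_0}$ and $S_{r_1}$. Expanding $D\varphi$, invoking the ellipticity $A (x, \xi) \cdot \xi \ge C_1 |\xi|^p$, the growth $|A (x, \xi)| \le C_2 |\xi|^{p - 1}$, and~\eqref{1.5}, and applying Young's inequality to the cross-term $|D u|^{p - 1} |D\eta| \eta^{p - 1} (u - M_0)_+$ and the drift term $|D u|^{p - 1} b^* (u - M_0)_+ \eta^p$ in the splitting $|Du|^{p - 1} \eta^{p - 1} \cdot X / \eta^{p - 1}$, the resulting $|D u|^p \eta^p \chi_{u > M_0}$ contributions get absorbed into the coercivity term. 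The outcome should be
\begin{equation*}
	f (s, \beta M_1) \int_{\Omega_{r_0, r_1}} (u - M_0)_+ \eta^p \, dx \le C \int_{\Omega_{r_0, r_1}} (u - M_0)_+^p (|D\eta|^p + \lambda^p \eta^p) \, dx,
\end{equation*}
with $C$ depending only on $n$, $p$, $C_1$, $C_2$, $\beta$. Using $(u - M_0)_+^p \le K^{p - 1} (u - M_0)_+$, this reduces to
\begin{equation*}
	f (s, \beta M_1) \int (u - M_0)_+ \eta^p \, dx \le C K^{p - 1} \int (u - M_0)_+ (|D\eta|^p + \lambda^p \eta^p) \, dx.
\end{equation*}
I would close by optimizing $\psi$. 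A tent-like profile supported in $(r_0, r_1)$, with transition length scale $\ell = \min (r_1 - r_0, c / \lambda)$, yields $|D\eta|^p \sim \ell^{-p}$ on the transition rings, so balancing this against $\lambda^p \eta^p$ produces the two arguments of the $\min$. After dividing by $\int (u - M_0)_+ \eta^p$ and taking the $(p - 1)$-th root, the stated estimate emerges with $\gamma_1$ absorbing the Young constants and the geometric factors.

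The main obstacle is the ratio $\int (u - M_0)_+ (|D\eta|^p + \lambda^p \eta^p) / \int (u - M_0)_+ \eta^p$ in the final optimization: bounding this ratio requires that $(u - M_0)_+$ not be concentrated solely on the narrow transition rings where $|D\eta|$ is supported. This calls for a quantitative use of the observation that $M (\rho; u) \ge M_0$ for every $\rho \in [r_0, r_1]$, which forces $\{u > M_0\} \cap S_\rho$ to have positive $(n - 1)$-measure whenever $M (\rho; u) > M_0$; combining this with~\eqref{3.1} and a capacity- or Harnack-type argument derived from Corollary~\ref{C4.1} should propagate the mass of $(u - M_0)_+$ across the annulus with enough uniformity to close the estimate.
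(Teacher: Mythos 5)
Your reduction to the inequality
\begin{equation*}
	f (s, \beta M_1) \int (u - M_0)_+ \eta^p \, dx
	\le
	C K^{p - 1} \int (u - M_0)_+ \bigl( |D\eta|^p + \lambda^p \eta^p \bigr) \, dx
\end{equation*}
is plausible and the choice of transition scale $\ell = \min ( r_1 - r_0, c / \lambda )$ correctly matches the scale $r = \min \{ 1/\lambda, (r_1 - r_0)/4 \}$ used in the paper. But the step you flag at the end is not a loose end that a soft appeal to the maximum principle will close; it is the entire difficulty, and your proposed patch does not address it. Knowing that $\{ u > M_0 \} \cap S_\rho$ has positive $(n-1)$-measure for each $\rho \in (r_0, r_1)$ gives no quantitative lower bound on $\int (u - M_0)_+ \eta^p$: the set can be a thin cusp clinging to a small solid angle, and $(u - M_0)_+$ can be small on the plateau $\{\eta = 1\}$ while being comparable to $K$ only near $S_{r_1}$, exactly where $|D\eta|$ lives. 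Without a quantitative non-degeneracy estimate — something of De Giorgi or Moser type that converts ``$u$ achieves a value close to $M(\rho;u)$ near a point of $S_\rho$'' into ``the superlevel set has measure $\gtrsim r^n$ in a ball of radius $r$'' — the ratio you must bound can be arbitrarily large, and the argument does not close.

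The paper avoids this obstruction entirely by never forming a single global energy identity on the annulus. Instead it chains $N \sim (r_1-r_0)/r$ purely local estimates on balls $B_r^{y_i}$ centered at points $y_i \in S_{\rho_i} \cap \Omega$ chosen so that $u$ nearly attains $M(\rho_i; u)$ there (relation~\eqref{PL3.1.5}). On each ball it truncates $u$ above a carefully chosen level to get a subsolution $v_i$ of a drift $p$-Laplace inequality (Lemma~\ref{L4.2}), applies Lemma~\ref{L4.3} to bound $\esssup v_i^{p-1}$ by $-\int Q(x, Dv_i) D\psi_i$, controls the drift contribution via Lemma~\ref{L4.5}, and — this is the crucial missing ingredient in your approach — invokes the De~Giorgi-type Lemma~\ref{L4.4} contrapositively: since $v_i$ is not small near $y_i$, the level set $\omega_i$ must have measure at least $\delta r^n$, which yields the local increment estimate~\eqref{PL3.1.7}. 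Summing over $i$ propagates the increment across the annulus. So a correct proof needs the local sup-to-measure machinery of Section~\ref{S4}, not just the test-function energy estimate and the maximum principle; absorbing the transition-ring mass is precisely what Lemma~\ref{L4.4} and the choice of the centers $y_i$ accomplish, and your outline has no substitute for them.
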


The proof of Lemma~\ref{L3.1} is given in Section~\ref{S4}.

\begin{Corollary}\label{C3.1}
Suppose that
$0 < \beta < 1$,
$R_0 < r_0 < r_1 < R_1$,
$\sigma r_0 \ge r_1$
and, moreover,
$\beta^{1/2} M (r_1; u) \le  M (r_0; u)$.
Then
\begin{equation}
	M (r_1; u) - M (r_0; u)
	\ge
	\gamma_2
	(r_1 - r_0)
	\left(
		\int_{\rho_0}^{\rho_1}
		e^{
			- k
			\int_\xi^{\rho_1}
			b (\zeta)
			\,
			d\zeta
		}
		f (\xi, \beta M (\xi; u))
		\,
		d\xi
	\right)^{1 / (p - 1)}
	\label{C3.1.1}
\end{equation}
for all real numbers
$R_0 < \rho_0 < \rho_1 < R_1$
satisfying the inequalities
$r_1 / \sigma \le \rho_0$,
$\rho_1 \le r_1$,
and
$\rho_1 - \rho_0 \le r_1 - r_0$,
where the constant $\gamma_2 > 0$ depends only on
$n$, $p$, $k$, $C_1$, $C_2$, and $\beta$.
\end{Corollary}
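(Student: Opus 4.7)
The plan is to apply Lemma~\ref{L3.1} pointwise, taking $s=\xi$ as $\xi$ ranges over $[\rho_0,\rho_1]$, and then to integrate the resulting pointwise bound against the weight $e^{-k\int_\xi^{\rho_1}b(\zeta)\,d\zeta}$. The hypotheses $r_1/\sigma\le\rho_0$, $\rho_1\le r_1$, and $\sigma r_0\ge r_1$ yield $[\rho_0,\rho_1]\subset[r_1/\sigma,\sigma r_0]\cap(R_0,R_1)$, so every $\xi\in[\rho_0,\rho_1]$ is an admissible choice of $s$; the assumption $\sigma r_0\ge r_1$ is strictly stronger than the $\sigma^2r_0\ge r_1$ demanded by Lemma~\ref{L3.1}, so the lemma applies.

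Raising Lemma~\ref{L3.1} to the $(p-1)$-st power and using
$\min\{(r_1-r_0)^{p/(p-1)},(r_1-r_0)/\lambda^{1/(p-1)}\}^{p-1}=(r_1-r_0)^{p-1}\min\{r_1-r_0,\,1/\lambda\}$
gives, for every such $\xi$,
$$
f(\xi,\beta M(r_1;u))\le\frac{(M(r_1;u)-M(r_0;u))^{p-1}}{\gamma_1^{p-1}\,(r_1-r_0)^{p-1}\,\min\{r_1-r_0,\,1/\lambda\}}.
$$
Since $M(\cdot;u)$ is non-decreasing and $f(\xi,\cdot)$ is non-decreasing in its second argument, the left-hand side majorises $f(\xi,\beta M(\xi;u))$. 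Multiplying by the weight $e^{-k\int_\xi^{\rho_1}b(\zeta)\,d\zeta}$ and integrating in $\xi$ from $\rho_0$ to $\rho_1$ yields
$$
\int_{\rho_0}^{\rho_1}e^{-k\int_\xi^{\rho_1}b(\zeta)\,d\zeta}f(\xi,\beta M(\xi;u))\,d\xi\le\frac{(M(r_1;u)-M(r_0;u))^{p-1}\,I}{\gamma_1^{p-1}\,(r_1-r_0)^{p-1}\,\min\{r_1-r_0,\,1/\lambda\}},
$$
where $I:=\int_{\rho_0}^{\rho_1}e^{-k\int_\xi^{\rho_1}b(\zeta)\,d\zeta}\,d\xi$.

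To close the argument I would bound $I$ in two ways: trivially, $I\le\rho_1-\rho_0\le r_1-r_0$; and using $b\ge\lambda$ on $[\rho_0,\rho_1]$, $I\le\int_{\rho_0}^{\rho_1}e^{-k\lambda(\rho_1-\xi)}\,d\xi\le 1/(k\lambda)$. Thus $I\le\min\{r_1-r_0,\,1/(k\lambda)\}\le\max\{1,1/k\}\min\{r_1-r_0,\,1/\lambda\}$, which cancels the $\min$ in the denominator above. Taking the $(p-1)$-st root then produces \eqref{C3.1.1} with $\gamma_2=\gamma_1/\max\{1,1/k\}^{1/(p-1)}$, a constant of the type required. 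There is essentially no obstacle: the only step with any content is the observation that Lemma~\ref{L3.1} is valid for every admissible $s$ and therefore may be applied at each $\xi$ before integrating, with the exponential weight absorbing the $1/\lambda$ factor in the regime where $\lambda$ is large.
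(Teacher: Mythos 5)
Your proof is correct and follows essentially the same route as the paper: invoke Lemma~\ref{L3.1} with $s$ ranging over $[\rho_0,\rho_1]$ (admissible since $[\rho_0,\rho_1]\subset[r_1/\sigma,\sigma r_0]\cap(R_0,R_1)$), use the monotonicity of $f$ in its second argument, and bound $\int_{\rho_0}^{\rho_1}e^{-k\int_\xi^{\rho_1}b}\,d\xi$ by $\min\{\rho_1-\rho_0,1/(k\lambda)\}$. The paper instead fixes a single point $\xi_*\in(\rho_0,\rho_1)$ at which the integrand dominates the weighted average and applies Lemma~\ref{L3.1} at $s=\xi_*$, whereas you apply the lemma at every $\xi$ and then integrate the resulting pointwise bound; these are equivalent organizations of the same argument and yield the same constant up to the $\max\{1,1/k\}^{1/(p-1)}$ factor, which both proofs must absorb into $\gamma_2$.
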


\begin{proof}
We have
\begin{align*}
	\int_{
		\rho_0
	}^{
		\rho_1
	}
	e^{
		- k
		\int_\xi^{
			\rho_1
		}
		b (\zeta)
		\,
		d\zeta
	}
	f (\xi, \beta M (\xi; u))
	\,
	d\xi
	&
	\le
	f (\xi_*, \beta M (\xi_*; u))
	\int_{
		\rho_0
	}^{
		\rho_1
	}
	e^{
		- k
		\int_\xi^{
			\rho_1
		}
		b (\zeta)
		\,
		d\zeta
	}
	\,
	d\xi
	\\
	&
	\le
	f (\xi_*, \beta M (\xi_*; u))
	\int_{
		\rho_0
	}^{
		\rho_1
	}
	e^{
		- k
		\lambda
		(\rho_1 - \xi)
	}
	\,
	d\xi
\end{align*}
for some $\xi_* \in (\rho_0, \rho_1)$, where
$$
	\lambda
	=
	\inf_{
		(\rho_0, \rho_1)
	}
	b.
$$
Since
$$
	\int_{
		\rho_0
	}^{
		\rho_1
	}
	e^{
		- k
		\lambda
		(\rho_1 - \xi)
	}
	\,
	d\xi
	=
	\frac{
		1 
		- 
		e^{
			- k
			\lambda
			(\rho_1 - \rho_0)
		}
	}{
		k
		\lambda
	}
	\le
	\min
	\left\{
		\rho_1 - \rho_0,
		\frac{
			1
		}{
			k
			\lambda
		}
	\right\},
$$
this implies the estimate
\begin{align*}
	&
	(r_1 - r_0)
	\left(
		\int_{\rho_0}^{\rho_1}
		e^{
			- k
			\int_\xi^{\rho_1}
			b (\zeta)
			\,
			d\zeta
		}
		f (\xi, \beta M (\xi; u))
		\,
		d\xi
	\right)^{1 / (p - 1)}
	\\
	&
	\quad
	\le
	\min
	\left\{
		(r_1 - r_0)^{p / (p - 1)},
		\frac{
		r_1 - r_0
		}{
			(k \lambda)^{1 / (p - 1)}
		}
	\right\}
	f^{1 / (p - 1)} (\xi_*, \beta M (\xi_*; u)),
\end{align*}
whence in accordance with Lemma~\ref{C3.1} we obtain~\eqref{C3.1.1}.
\end{proof}

\begin{Corollary}\label{C3.2}
Let the conditions of Corollary~$\ref{C3.1}$ be fulfilled, then
\begin{align*}
	&
	M (r_1; u) - M (r_0; u)
	\\
	&
	\quad
	\ge
	\gamma_3
	(r_1 - r_0)
	\left(
		\frac{
			r_0 - \rho_1
		}{
			\rho_1 - \rho_0
		}
		\int_{\rho_0}^{\rho_1}
		e^{
			- k
			\int_\xi^{r_0}
			b (\zeta)
			\,
			d\zeta
		}
		f (\xi, \beta M (\xi; u))
		\,
		d\xi
	\right)^{1 / (p - 1)}
\end{align*}
for all real numbers
$R_0 < \rho_0 < \rho_1 < R_1$
satisfying the inequalities
$r_1 / \sigma \le \rho_0$,
$\rho_1 < r_0$,
and
$r_0 - \rho_1 \le r_1 - r_0$,
where the constant $\gamma_3 > 0$ depends only on
$n$, $p$, $k$, $C_1$, $C_2$, and $\beta$.
\end{Corollary}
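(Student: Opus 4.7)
The plan is to follow the proof of Corollary~\ref{C3.1} essentially verbatim, inserting one additional calculus inequality to absorb the new factor $(r_0-\rho_1)/(\rho_1-\rho_0)$. Set $D=r_0-\rho_1$ and $L=\rho_1-\rho_0$, and let $\lambda=\inf_{[r_1/\sigma,\sigma r_0]\cap(R_0,R_1)}b$, which is positive by the reduction $\inf_{(R_0,R_1)}b>0$ already made at the start of this section. The first step is the same integral mean value manoeuvre as in the proof of Corollary~\ref{C3.1}: pick $\xi_*\in(\rho_0,\rho_1)$ so that
$$
	\int_{\rho_0}^{\rho_1}
	e^{-k\int_\xi^{r_0}b(\zeta)\,d\zeta}
	f(\xi,\beta M(\xi;u))\,d\xi
	\le
	f(\xi_*,\beta M(\xi_*;u))
	\int_{\rho_0}^{\rho_1}
	e^{-k\int_\xi^{r_0}b(\zeta)\,d\zeta}\,d\xi,
$$
then use $b\ge\lambda$ on $[\rho_0,r_0]$ to bound the weight integral by
$$
	\int_{\rho_0}^{\rho_1}e^{-k\int_\xi^{r_0}b(\zeta)\,d\zeta}\,d\xi
	\le
	\int_{\rho_0}^{\rho_1}e^{-k\lambda(r_0-\xi)}\,d\xi
	=
	\frac{e^{-k\lambda D}(1-e^{-k\lambda L})}{k\lambda}
	\le
	e^{-k\lambda D}\min\{L,1/(k\lambda)\}.
$$
Multiplying by $D/L$ reduces the task to controlling $De^{-k\lambda D}\cdot f(\xi_*,\beta M(\xi_*;u))$.

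The obstacle is the factor $D/L$, which is unbounded when $L\ll D$. The cure is the one-line calculus inequality $xe^{-x}\le 1/e$, applied with $x=k\lambda D$: it gives $De^{-k\lambda D}\le 1/(ek\lambda)$, and combined with the trivial $De^{-k\lambda D}\le D\le r_1-r_0$ it yields $De^{-k\lambda D}\le\min\{r_1-r_0,\,1/(ek\lambda)\}$. Raising to the $1/(p-1)$-power and multiplying by $r_1-r_0$ therefore produces
$$
	(r_1-r_0)\left(\frac{D}{L}\int_{\rho_0}^{\rho_1}e^{-k\int_\xi^{r_0}b}f\,d\xi\right)^{1/(p-1)}
	\le
	C(k)\min\bigl\{(r_1-r_0)^{p/(p-1)},\,(r_1-r_0)/\lambda^{1/(p-1)}\bigr\}f^{1/(p-1)}(\xi_*,\beta M(\xi_*;u))
$$
for a constant $C(k)$ depending only on $p$ and $k$ --- exactly the shape required to feed into Lemma~\ref{L3.1}.

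To close, apply Lemma~\ref{L3.1} with $s=\xi_*\in[r_1/\sigma,\sigma r_0]\cap(R_0,R_1)$ (the hypothesis $\sigma^2 r_0\ge r_1$ of Lemma~\ref{L3.1} holds since $\sigma>1$ and $\sigma r_0\ge r_1$), obtaining
$$
	M(r_1;u)-M(r_0;u)
	\ge
	\gamma_1\min\bigl\{(r_1-r_0)^{p/(p-1)},\,(r_1-r_0)/\lambda^{1/(p-1)}\bigr\}f^{1/(p-1)}(\xi_*,\beta M(r_1;u)).
$$
The monotonicity $f(\xi_*,\beta M(r_1;u))\ge f(\xi_*,\beta M(\xi_*;u))$ --- which follows from $M(r_1;u)\ge M(\xi_*;u)$ together with the hypothesis that $f$ is non-decreasing in its second argument --- then chains with the previous bound and yields the estimate of Corollary~\ref{C3.2} with $\gamma_3=\gamma_1/C(k)$, depending on the parameters $n,p,k,C_1,C_2,\beta$ as stated.
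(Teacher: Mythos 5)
Your proof is correct and follows essentially the same route as the paper's: select $\xi_*$ by an integral mean value argument, bound the exponential weight by $e^{-k\lambda(r_0-\xi)}$, absorb the factor $(r_0-\rho_1)/(\rho_1-\rho_0)$ via an elementary estimate on $x\mapsto xe^{-x}$ (you use $xe^{-x}\le 1/e$, the paper uses the equivalent $(1+x)e^{-x}\le 1$), and close with Lemma~\ref{L3.1} and the monotonicity of $f$ in its second argument. The only cosmetic difference is your choice of $\lambda$ as the infimum over $[r_1/\sigma,\sigma r_0]\cap(R_0,R_1)$ rather than over $(\rho_0,r_0)$, which if anything aligns more cleanly with the $\lambda$ appearing in Lemma~\ref{L3.1}.
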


\begin{proof}
There exists $\xi_* \in (\rho_0, \rho_1)$ such that
\begin{align}
	\int_{
		\rho_0
	}^{
		\rho_1
	}
	e^{
		- k
		\int_\xi^{
			r_0
		}
		b (\zeta)
		\,
		d\zeta
	}
	f (\xi, \beta M (\xi; u))
	\,
	d\xi
	&
	\le
	f (\xi_*, \beta M (\xi_*; u))
	\int_{
		\rho_0
	}^{
		\rho_1
	}
	e^{
		- k
		\int_\xi^{
			r_0
		}
		b (\zeta)
		\,
		d\zeta
	}
	\,
	d\xi
	\nonumber
	\\
	&
	\le
	f (\xi_*, \beta M (\xi_*; u))
	\int_{
		\rho_0
	}^{
		\rho_1
	}
	e^{
		- k
		\lambda
		(r_0 - \xi)
	}
	\,
	d\xi,
	\label{PC3.2.1}
\end{align}
where
$$
	\lambda
	=
	\inf_{
		(\rho_0, r_0)
	}
	b.
$$
We have
$$
	\int_{
		\rho_0
	}^{
		\rho_1
	}
	e^{
		- k
		\lambda
		(r_0 - \xi)
	}
	\,
	d\xi
	=
	\frac{
		e^{
			- k
			\lambda
			(r_0 - \rho_1)
		}
		-
		e^{
			- k
			\lambda
			(r_0 - \rho_0)
		}
	}{
		k \lambda
	}
	\le
	(\rho_1 - \rho_0)
	e^{
		- k
		\lambda
		(r_0 - \rho_1)
	}.
$$
In addition,
\begin{align*}
	e^{
		- k
		\lambda
		(r_0 - \rho_1)
	}
	&
	\le
	\frac{
		1
		-
		e^{
			- k
			\lambda
			(r_0 - \rho_1)
		}
	}{
		k
		\lambda
		(r_0 - \rho_1)
	}
	\\
	&
	\le
	\frac{1}{r_0 - \rho_1}
	\min
	\left\{
		r_0 - \rho_1,
		\frac{
			1
		}{
			k
			\lambda
		}
	\right\}
	\\
	&
	\le
	\frac{1}{r_0 - \rho_1}
	\min
	\left\{
		r_1 - r_0,
		\frac{
			1
		}{
			k
			\lambda
		}
	\right\}.
\end{align*}
Hence, we obtain
$$
	\int_{
		\rho_0
	}^{
		\rho_1
	}
	e^{
		- k
		\lambda
		(r_0 - \xi)
	}
	\,
	d\xi
	\le
	\frac{\rho_1 - \rho_0}{r_0 - \rho_1}
	\min
	\left\{
		r_1 - r_0,
		\frac{
			1
		}{
			k
			\lambda
		}
	\right\}.
$$
The last formula and~\eqref{PC3.2.1} imply the estimate
\begin{align*}
	&
	(r_1 - r_0)
	\left(
		\frac{
			r_0 - \rho_1
		}{
			\rho_1 - \rho_0
		}
		\int_{\rho_0}^{\rho_1}
		e^{
			- k
			\int_\xi^{r_0}
			b (\zeta)
			\,
			d\zeta
		}
		f (\xi, \beta M (\xi; u))
		\,
		d\xi
	\right)^{1 / (p - 1)}
	\\
	&
	\quad
	\le
	\min
	\left\{
		(r_1 - r_0)^{p / (p - 1)},
		\frac{
		r_1 - r_0
		}{
			(k \lambda)^{1 / (p - 1)}
		}
	\right\}
	f^{1 / (p - 1)} (\xi_*, \beta M (\xi_*; u)).
\end{align*}
Thus, to complete the proof it remains to use Lemma~\ref{L3.1}.
\end{proof}

\begin{Lemma}\label{L3.2}
Suppose that
$0 < \beta < 1$,
$R_0 < r_0 < r_1 < R_1$,
and
$\beta^{1/2} M (r_1; u) \le  M (r_0; u)$,
then
\begin{align}
	&
	M (r_1; u) - M (r_0; u)
	\nonumber
	\\
	&
	\quad
	\ge
	\gamma_4
	\int_{r_0}^{r_1}
	dt
	\,
	\left(
		\frac{
			1
		}{
			t^{1+a}
		}
		\int_{r_0}^t
		\xi^{1+a}
		e^{
			- k
			\int_\xi^t
			b (\zeta)
			\,
			d\zeta
		}
		f (\xi, \beta M (\xi; u))
		\,
		d\xi
	\right)^{1 / (p - 1)},
	\label{L3.2.1}
\end{align}
where the constant $\gamma_4 > 0$ depends only on
$n$, $p$, $a$, $k$, $\sigma$, $C_1$, $C_2$, and $\beta$.
\end{Lemma}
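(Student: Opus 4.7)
The plan is to subdivide $[r_0, r_1]$ geometrically, apply Corollary~\ref{C3.1} on each sub-interval to obtain a telescoping lower bound, and then match the resulting dyadic sum against the target integral by exploiting the geometric decay of the weight $\xi^{1+a}/t^{1+a}$.

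Fix $\tau \in (1, \sqrt{\sigma}\,]$ and put $t_i = r_0 \tau^i$ for $i = 0, 1, \ldots, N$, where $N$ is the largest integer with $t_N \le r_1$; the partial interval $[t_N, r_1]$ is handled identically and contributes only bounded multiplicative constants. Since $M(\cdot;u)$ is non-decreasing and $\beta^{1/2} M(r_1;u) \le M(r_0;u)$, we have $\beta^{1/2} M(t_{i+1};u) \le M(t_i;u)$ for every $i$, so Corollary~\ref{C3.1} applies to each adjacent pair. Applying it on $(t_i, t_{i+1})$ with $(\rho_0, \rho_1) = (t_{i-1}, t_i)$ (which is legal because $\tau^2 \le \sigma$ forces $t_{i+1}/\sigma \le t_{i-1}$, while $\rho_1 - \rho_0 \le t_{i+1}-t_i$ is automatic) and writing $I_j := \int_{t_j}^{t_{j+1}} e^{-k\int_\xi^{t_{j+1}} b(\zeta)\, d\zeta}\, f(\xi, \beta M(\xi;u))\, d\xi$, telescoping yields
$$M(r_1;u) - M(r_0;u) \ge c_1 \sum_{i \ge 1}(t_{i+1}-t_i)\, I_{i-1}^{1/(p-1)}.$$

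Next I would upper-bound the target integral by the same dyadic sum. For $t \in [t_i, t_{i+1}]$, split the inner integral in $w(t) := t^{-1-a}\int_{r_0}^t \xi^{1+a} e^{-k\int_\xi^t b\, d\zeta}\, f\, d\xi$ into the pieces $\xi \in [t_j, t_{j+1}]$ with $j \le i$. Using $t \ge t_i$, $\xi \le t_{j+1}$, and the factorization $e^{-k\int_\xi^t b} \le e^{-k\int_{t_{j+1}}^{t_i} b}\, e^{-k\int_\xi^{t_{j+1}} b}$ one obtains
$$w(t) \le C \sum_{j \le i}\tau^{-(1+a)(i-j-1)}\, e^{-k\int_{t_{j+1}}^{t_i} b(\zeta)\, d\zeta}\, I_j.$$
Raising to the power $1/(p-1)$, integrating over $t \in [t_i, t_{i+1}]$, summing in $i$, and exchanging the order of summation, the inner sum over $i \ge j$ becomes a geometric series with common ratio $\tau^{1-(1+a)/(p-1)}$, which is less than $1$ precisely because the hypothesis $a > p - 2$ gives $(1+a)/(p-1) > 1$. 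The outcome is $\int_{r_0}^{r_1} w(t)^{1/(p-1)}\, dt \le C' \sum_j (t_{j+1}-t_j)\, I_j^{1/(p-1)}$, and combining with the first display (after a shift of index) yields \eqref{L3.2.1}.

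The principal technical obstacle is the passage from $w(t) \le \sum_j A_j$ to $w(t)^{1/(p-1)} \le C \sum_j A_j^{1/(p-1)}$. For $p \ge 2$ this follows immediately from the subadditivity of $x \mapsto x^{1/(p-1)}$, but for $1 < p < 2$ one must instead invoke a weighted H\"older inequality $\left(\sum A_j\right)^{1/(p-1)} \le C \sum_j \lambda_j A_j^{1/(p-1)}$ with $\lambda_j = \tau^{\alpha(j-i)}$ for a small auxiliary exponent $\alpha > 0$, and then verify that after summing in $i$ the resulting geometric-series condition still reduces exactly to $a > p - 2$. The initial interval $[r_0, t_1]$ and the partial last interval require only a routine separate treatment and affect constants alone.
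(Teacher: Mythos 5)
Your skeleton matches the paper's: geometric subdivision with a ratio compatible with $\sigma$, Corollary~\ref{C3.1} for the telescoping lower bound, a dyadic summation exploiting $a>p-2$ to get a convergent geometric ratio, and the split between $p\ge 2$ (subadditivity) and $1<p<2$ (weighted H\"older with an auxiliary exponent $\delta$ satisfying $a-p+2-\delta>0$).

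However, there is a genuine gap in the step where you bound $\int_{r_0}^{r_1}w(t)^{1/(p-1)}\,dt$ by $C'\sum_j(t_{j+1}-t_j)I_j^{1/(p-1)}$. When you split $w(t)$ for $t\in[t_i,t_{i+1}]$ into pieces $\xi\in[t_j,t_{j+1}]$ and apply the factorization $e^{-k\int_\xi^t b}\le e^{-k\int_{t_{j+1}}^{t_i}b}e^{-k\int_\xi^{t_{j+1}}b}$, the prefactor $e^{-k\int_{t_{j+1}}^{t_i}b}$ is $\le 1$ only for $j<i$. For the \emph{diagonal} piece $j=i$ it equals $e^{k\int_{t_i}^{t_{i+1}}b}$, which is unbounded: your intermediate inequality for $w(t)$ is still literally true, but after raising to the power $1/(p-1)$ and integrating it produces a term $(t_{i+1}-t_i)\,e^{(k/(p-1))\int_{t_i}^{t_{i+1}}b}\,I_i^{1/(p-1)}$ that is not controlled by the target sum. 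And indeed the diagonal contribution $\int_{t_i}^{t_{i+1}}\bigl(t^{-1-a}\int_{t_i}^t\xi^{1+a}e^{-k\int_\xi^t b}f\,d\xi\bigr)^{1/(p-1)}dt$ is \emph{not} comparable to $(t_{i+1}-t_i)I_i^{1/(p-1)}$ in general — both are controlled by $M(t_{i+1};u)-M(t_i;u)$, but neither controls the other — so no choice of constants fixes this.

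The correct handling is exactly what the paper does: split the inner integral into a near piece $\int_{t_i}^t$ and a far piece $\int_{r_0}^{t_i}$. The near piece is bounded \emph{directly} by $M(t_{i+1};u)-M(t_i;u)$ using the same argument as the base case $\sigma r_0\ge r_1$ (which produces the $\min\{(t_{i+1}-t_i)^{p/(p-1)},(t_{i+1}-t_i)/\lambda^{1/(p-1)}\}$ structure and then invokes Lemma~\ref{L3.1}), rather than being funneled through $I_i$. Only the far piece $j<i$ goes into the dyadic summation, where your geometric-decay argument and the $a>p-2$ condition apply correctly. With that modification your proof becomes essentially the paper's.
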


\begin{proof}
In the case of $\sigma r_0 \ge r_1$, taking $\xi_* \in (r_0, r_1)$ 
such that
$$
	\esssup_{
		\xi \in (r_0, r_1)
	}
	f (\xi, \beta M (\xi; u))
	\le
	2
	f (\xi_*, \beta M (\xi_*; u)),
$$
we have
\begin{align*}
	&
	\int_{r_0}^{r_1}
	dt
	\,
	\left(
		\frac{
			1
		}{
			t^{1+a}
		}
		\int_{r_0}^t
		\xi^{1+a}
		e^{
			- k
			\int_\xi^t
			b (\zeta)
			\,
			d\zeta
		}
		f (\xi, \beta M (\xi; u))
		\,
		d\xi
	\right)^{1 / (p - 1)}
	\\
	&
	\quad
	\le
	2^{1 / (p - 1)}
	f^{1 / (p-1)} (\xi_*, \beta M (\xi_*; u))
	\int_{r_0}^{r_1}
	dt
	\,
	\left(
		\frac{
			1
		}{
			t^{1+a}
		}
		\int_{r_0}^t
		\xi^{1+a}
		e^{
			- k
			\int_\xi^t
			b (\zeta)
			\,
			d\zeta
		}
		\,
		d\xi
	\right)^{1 / (p - 1)}
	\\
	&
	\quad
	\le
	2^{1 / (p - 1)}
	f^{1 / (p-1)} (\xi_*, \beta M (\xi_*; u))
	\int_{r_0}^{r_1}
	dt
	\,
	\left(
		\int_{r_0}^t
		e^{
			- k
			\lambda
			(t - \xi)
		}
		\,
		d\xi
	\right)^{1 / (p - 1)},
\end{align*}
where
$$
	\lambda
	=
	\inf_{
		(r_0, r_1)
	}
	b.
$$
It presents no special problems to verify that
\begin{align*}
	\int_{r_0}^{r_1}
	dt
	\,
	\left(
		\int_{r_0}^t
		e^{
			- k
			\lambda
			(t - \xi)
		}
		\,
		d\xi
	\right)^{1 / (p - 1)}
	&
	=
	\int_{r_0}^{r_1}
	dt
	\,
	\left(
		\frac{
			1
			-
			e^{
				- k
				\lambda
				(t - r_0)
			}
		}{
			k \lambda
		}
	\right)^{1 / (p - 1)}
	\\
	&
	\le
	(r_1 - r_0)
	\left(
		\min
		\left\{
			r_1 - r_0,
			\frac{
				1
			}{
				k \lambda
			}
		\right\}
	\right)^{1 / (p - 1)};
\end{align*}
therefore,
\begin{align*}
	&
	\int_{r_0}^{r_1}
	dt
	\,
	\left(
		\frac{
			1
		}{
			t^{1+a}
		}
		\int_{r_0}^t
		\xi^{1+a}
		e^{
			- k
			\int_\xi^t
			b (\zeta)
			\,
			d\zeta
		}
		f (\xi, \beta M (\xi; u))
		\,
		d\xi
	\right)^{1 / (p - 1)}
	\\
	&
	\quad
	\le
	\gamma_5
	\min
	\left\{
		(r_1 - r_0)^{p / (p-1)},
		\frac{
			r_1 - r_0
		}{
			\lambda^{1 / (p-1)}
		}
	\right\}
	f^{1 / (p-1)} (\xi_*, \beta M (\xi_*; u)),
\end{align*}
where the constant $\gamma_5 > 0$ depends only on $p$ and $k$,
whence estimate~\eqref{L3.2.1} immediately follows according to Lemma~\ref{L3.1}.

Now, let $\sigma r_0 < r_1$ and $N$ be the maximal integer such that 
$\sigma^N r_0 < r_1$. We put $\rho_i = \sigma^i r_0$, $i = 0, \ldots, N$, 
and $\rho_{N+1} = r_1$. 
It can be seen that
\begin{align}
	&
	\int_{
		\rho_{i-1}
	}^{
		\rho_i
	}
	dt
	\,
	\left(
		\frac{
			1
		}{
			t^{1+a}
		}
		\int_{r_0}^t
		\xi^{1+a}
		e^{
			- k
			\int_\xi^t
			b (\zeta)
			\,
			d\zeta
		}
		f (\xi, \beta M (\xi; u))
		\,
		d\xi
	\right)^{1 / (p - 1)}
	\nonumber
	\\
	&
	\quad
	\le
	2^{1 / (p - 1)}
	\int_{
		\rho_{i-1}
	}^{
		\rho_i
	}
	dt
	\,
	\left(
		\frac{
			1
		}{
			t^{1+a}
		}
		\int_{
			\rho_{i-1}
		}^t
		\xi^{1+a}
		e^{
			- k
			\int_\xi^t
			b (\zeta)
			\,
			d\zeta
		}
		f (\xi, \beta M (\xi; u))
		\,
		d\xi
	\right)^{1 / (p - 1)}
	\nonumber
	\\
	&
	\quad
	\phantom{{}\le}
	+
	2^{1 / (p - 1)}
	\int_{
		\rho_{i-1}
	}^{
		\rho_i
	}
	dt
	\,
	\left(
		\frac{
			1
		}{
			t^{1+a}
		}
		\int_{
			r_0
		}^{
			\rho_{i-1}
		}
		\xi^{1+a}
		e^{
			- k
			\int_\xi^t
			b (\zeta)
			\,
			d\zeta
		}
		f (\xi, \beta M (\xi; u))
		\,
		d\xi
	\right)^{1 / (p - 1)}
	\label{PL3.2.2}
\end{align}
for all $i = 2, \ldots, N + 1$.
Repeating the previous arguments, we obtain
\begin{align}
	&
	M (\rho_i; u) - M (\rho_{i-1}; u)
	\nonumber
	\\
	&
	\quad
	\ge
	\gamma_6
	\int_{
		\rho_{i-1}
	}^{
		\rho_i
	}
	dt
	\,
	\left(
		\frac{
			1
		}{
			t^{1+a}
		}
		\int_{
			\rho_{i-1}
		}^t
		\xi^{1+a}
		e^{
			- k
			\int_\xi^t
			b (\zeta)
			\,
			d\zeta
		}
		f (\xi, \beta M (\xi; u))
		\,
		d\xi
	\right)^{1 / (p - 1)}
	\label{PL3.2.3}
\end{align}
for all $i = 2, \ldots, N + 1$, 
where the constant $\gamma_6 > 0$ depends only on
$n$, $p$, $k$, $C_1$, $C_2$, and $\beta$.
Analogously,
\begin{align}
	&
	M (\rho_1; u) - M (r_0; u)
	\nonumber
	\\
	&
	\quad
	\ge
	\gamma_6
	\int_{
		r_0
	}^{
		\rho_1
	}
	dt
	\,
	\left(
		\frac{
			1
		}{
			t^{1+a}
		}
		\int_{
			r_0
		}^t
		\xi^{1+a}
		e^{
			- k
			\int_\xi^t
			b (\zeta)
			\,
			d\zeta
		}
		f (\xi, \beta M (\xi; u))
		\,
		d\xi
	\right)^{1 / (p - 1)}.
	\label{PL3.2.4}
\end{align}

Further, in the case of $p \ge 2$, we have
\begin{align*}
	&
	\left(
		\int_{
			r_0
		}^{
			\rho_{i-1}
		}
		\xi^{1+a}
		e^{
			- k
			\int_\xi^t
			b (\zeta)
			\,
			d\zeta
		}
		f (\xi, \beta M (\xi; u))
		\,
		d\xi
	\right)^{1 / (p - 1)}
	\\
	&
	\quad
	\le
	\sum_{j=2}^i
	\left(
		\int_{
			\rho_{j-2}
		}^{
			\rho_{j-1}
		}
		\xi^{1+a}
		e^{
			- k
			\int_\xi^t
			b (\zeta)
			\,
			d\zeta
		}
		f (\xi, \beta M (\xi; u))
		\,
		d\xi
	\right)^{1 / (p - 1)}
	\\
	&
	\quad
	\le
	\sum_{j=2}^i
	\left(
		\int_{
			\rho_{j-2}
		}^{
			\rho_{j-1}
		}
		\xi^{1+a}
		e^{
			- k
			\int_\xi^{
				\rho_{j-1}
			}
			b (\zeta)
			\,
			d\zeta
		}
		f (\xi, \beta M (\xi; u))
		\,
		d\xi
	\right)^{1 / (p - 1)}
\end{align*}
for all $t \in (\rho_{i-1}, \rho_i)$, $i = 2, \ldots, N + 1$.
In particular,
\begin{align*}
	&
	\sum_{i=2}^{N+1}
	\int_{
		\rho_{i-1}
	}^{
		\rho_i
	}
	dt
	\,
	\left(
		\frac{
			1
		}{
			t^{1+a}
		}
		\int_{
			r_0
		}^{
			\rho_{i-1}
		}
		\xi^{1+a}
		e^{
			- k
			\int_\xi^t
			b (\zeta)
			\,
			d\zeta
		}
		f (\xi, \beta M (\xi; u))
		\,
		d\xi
	\right)^{1 / (p - 1)}
	\\
	&
	\quad
	\le
	\sum_{i=2}^{N+1}
	\int_{
		\rho_{i-1}
	}^{
		\rho_i
	}
	\frac{
		dt
	}{
		\rho_{i-1}^{(1 + a) / (p - 1)}
	}
	\left(
		\int_{
			r_0
		}^{
			\rho_{i-1}
		}
		\xi^{1+a}
		e^{
			- k
			\int_\xi^t
			b (\zeta)
			\,
			d\zeta
		}
		f (\xi, \beta M (\xi; u))
		\,
		d\xi
	\right)^{1 / (p - 1)}
	\\
	&
	\quad
	\le
	\sum_{i=2}^{N+1}
	\frac{
		\rho_i - \rho_{i-1}
	}{
		\rho_{i-1}^{(1 + a) / (p - 1)}
	}
	\sum_{j=2}^i
	\left(
		\int_{
			\rho_{j-2}
		}^{
			\rho_{j-1}
		}
		\xi^{1+a}
		e^{
			- k
			\int_\xi^{
				\rho_{j-1}
			}
			b (\zeta)
			\,
			d\zeta
		}
		f (\xi, \beta M (\xi; u))
		\,
		d\xi
	\right)^{1 / (p - 1)},
\end{align*}
whence in accordance with the evident inequalities
\begin{align*}
	&
	\int_{
		\rho_{j-2}
	}^{
		\rho_{j-1}
	}
	\xi^{1+a}
	e^{
		- k
		\int_\xi^{
			\rho_{j-1}
		}
		b (\zeta)
		\,
		d\zeta
	}
	f (\xi, \beta M (\xi; u))
	\,
	d\xi
	\\
	&
	\quad
	\le
	\rho_{j-1}^{1+a}
	\int_{
		\rho_{j-2}
	}^{
		\rho_{j-1}
	}
	e^{
		- k
		\int_\xi^{
			\rho_{j-1}
		}
		b (\zeta)
		\,
		d\zeta
	}
	f (\xi, \beta M (\xi; u))
	\,
	d\xi
\end{align*}
and
$\rho_i - \rho_{i-1} \le \sigma \rho_{i-1}$,
$2 \le j \le i \le N + 1$,
we obtain 
\begin{align}
	&
	\sum_{i=2}^{N+1}
	\int_{
		\rho_{i-1}
	}^{
		\rho_i
	}
	dt
	\,
	\left(
		\frac{
			1
		}{
			t^{1+a}
		}
		\int_{
			r_0
		}^{
			\rho_{i-1}
		}
		\xi^{1+a}
		e^{
			- k
			\int_\xi^t
			b (\zeta)
			\,
			d\zeta
		}
		f (\xi, \beta M (\xi; u))
		\,
		d\xi
	\right)^{1 / (p - 1)}
	\nonumber
	\\
	&
	\quad
	\le
	\sigma
	\sum_{i=2}^{N+1}
	\sum_{j=2}^{i}
	\left(
		\frac{
			\rho_{j-1}
		}{
			\rho_{i-1}
		}
	\right)^{(a - p + 2) / (p - 1)}
	\nonumber
	\\
	&
	\quad
	\phantom{{}\le}
	\times
	\rho_{j-1}
	\left(
		\int_{
			\rho_{j-2}
		}^{
			\rho_{j-1}
		}
		e^{
			- k
			\int_\xi^{
				\rho_{j-1}
			}
			b (\zeta)
			\,
			d\zeta
		}
		f (\xi, \beta M (\xi; u))
		\,
		d\xi
	\right)^{1 / (p - 1)}
	\nonumber
	\\
	&
	\quad
	=
	\sigma
	\sum_{j=2}^{N+1}
	\sum_{i=j}^{N+1}
	\left(
		\frac{
			\rho_{j-1}
		}{
			\rho_{i-1}
		}
	\right)^{(a - p + 2) / (p - 1)}
	\nonumber
	\\
	&
	\quad
	\phantom{{}=}
	\times
	\rho_{j-1}
	\left(
		\int_{
			\rho_{j-2}
		}^{
			\rho_{j-1}
		}
		e^{
			- k
			\int_\xi^{
				\rho_{j-1}
			}
			b (\zeta)
			\,
			d\zeta
		}
		f (\xi, \beta M (\xi; u))
		\,
		d\xi
	\right)^{1 / (p - 1)}.
	\label{PL3.2.5}
\end{align}
From the definition of the real numbers $\rho_i$, $i = 0, \ldots, N + 1$, it follows that
$$
	\sigma
	\sum_{i=j}^{N+1}
	\left(
		\frac{
			\rho_{j-1}
		}{
			\rho_{i-1}
		}
	\right)^{(a - p + 2) / (p - 1)}
	=
	\sum_{i=j}^{N+1}
	\sigma^{- (a - p + 2) (i - j) / (p - 1) + 1}
	\le
	\gamma_7
$$
for all $j = 2, \ldots, N + 1$, where the constant $\gamma_7 > 0$ 
depends only on $p$, $a$, and $\sigma$.
Consequently, relation~\eqref{PL3.2.5} implies the estimate
\begin{align}
	&
	\sum_{i=2}^{N+1}
	\int_{
		\rho_{i-1}
	}^{
		\rho_i
	}
	dt
	\,
	\left(
		\frac{
			1
		}{
			t^{1+a}
		}
		\int_{
			r_0
		}^{
			\rho_{i-1}
		}
		\xi^{1+a}
		e^{
			- k
			\int_\xi^t
			b (\zeta)
			\,
			d\zeta
		}
		f (\xi, \beta M (\xi; u))
		\,
		d\xi
	\right)^{1 / (p - 1)}
	\nonumber
	\\
	&
	\quad
	\le
	\gamma_7
	\sum_{j=2}^{N+1}
	\rho_{j-1}
	\left(
		\int_{
			\rho_{j-2}
		}^{
			\rho_{j-1}
		}
		e^{
			- k
			\int_\xi^{
				\rho_{j-1}
			}
			b (\zeta)
			\,
			d\zeta
		}
		f (\xi, \beta M (\xi; u))
		\,
		d\xi
	\right)^{1 / (p - 1)}.
	\label{PL3.2.6}
\end{align}
In so doing, Corollary~\ref{L3.1} enable us to assert that
\begin{align*}
	&
	M (\rho_{j-1}; u) - M (\rho_{j-2}; u)
	\\
	&
	\quad
	\ge
	\gamma_2
	(\rho_{j-1} - \rho_{j-2})
	\left(
		\int_{
			\rho_{j-2}
		}^{
			\rho_{j-1}
		}
		e^{
			- k
			\int_\xi^{
				\rho_{j-1}
			}
			b (\zeta)
			\,
			d\zeta
		}
		f (\xi, \beta M (\xi; u))
		\,
		d\xi
	\right)^{1 / (p - 1)}
	\\
	&
	\quad
	=
	\gamma_2
	\left(
		1 - \frac{1}{\sigma}
	\right)
	\rho_{j-1}
	\left(
		\int_{
			\rho_{j-2}
		}^{
			\rho_{j-1}
		}
		e^{
			- k
			\int_\xi^{
				\rho_{j-1}
			}
			b (\zeta)
			\,
			d\zeta
		}
		f (\xi, \beta M (\xi; u))
		\,
		d\xi
	\right)^{1 / (p - 1)}
\end{align*}
for all $j = 2, \ldots, N + 1$.
Thus,
\begin{align}
	&
	M (\rho_N; u) - M (r_0; u)
	\nonumber
	\\
	&
	\quad
	\ge
	\gamma_8
	\sum_{i=2}^{N+1}
	\int_{
		\rho_{i-1}
	}^{
		\rho_i
	}
	dt
	\,
	\left(
		\frac{
			1
		}{
			t^{1+a}
		}
		\int_{
			r_0
		}^{
			\rho_{i-1}
		}
		\xi^{1+a}
		e^{
			- k
			\int_\xi^t
			b (\zeta)
			\,
			d\zeta
		}
		f (\xi, \beta M (\xi; u))
		\,
		d\xi
	\right)^{1 / (p - 1)},
	\label{PL3.2.7}
\end{align}
where the constant $\gamma_8 > 0$ depends only on
$n$, $p$, $a$, $k$, $\sigma$, $C_1$, $C_2$, and $\beta$.

Now, assume that $1 < p < 2$. 
Since $a > p - 2$, there exists a real number $\delta > 0$ satisfying the condition 
$a - p + 2 - \delta > 0$.
In particular, we have $1 + a - \delta > 0$.
It is obvious that
\begin{align*}
	&
	\int_{
		r_0
	}^{
		\rho_{i-1}
	}
	\xi^{1+a}
	e^{
		- k
		\int_\xi^t
		b (\zeta)
		\,
		d\zeta
	}
	f (\xi, \beta M (\xi; u))
	\,
	d\xi
	\\
	&
	\quad
	=
	\sum_{j=2}^i
	\int_{
		\rho_{j-2}
	}^{
		\rho_{j-1}
	}
	\xi^{1+a}
	e^{
		- k
		\int_\xi^t
		b (\zeta)
		\,
		d\zeta
	}
	f (\xi, \beta M (\xi; u))
	\,
	d\xi
	\\
	&
	\quad
	\le
	\sum_{j=2}^i
	\int_{
		\rho_{j-2}
	}^{
		\rho_{j-1}
	}
	\xi^{1+a}
	e^{
		- k
		\int_\xi^{
			\rho_{j-1}
		}
		b (\zeta)
		\,
		d\zeta
	}
	f (\xi, \beta M (\xi; u))
	\,
	d\xi
\end{align*}
for all $t \in (\rho_{i-1}, \rho_i)$, $i = 2, \ldots, N + 1$.
Combining this with the estimates
\begin{align*}
	&
	\int_{
		\rho_{j-2}
	}^{
		\rho_{j-1}
	}
	\xi^{1+a}
	e^{
		- k
		\int_\xi^{
			\rho_{j-1}
		}
		b (\zeta)
		\,
		d\zeta
	}
	f (\xi, \beta M (\xi; u))
	\,
	d\xi
	\\
	&
	\quad
	\le
	\rho_{j-1}^\delta
	\int_{
		\rho_{j-2}
	}^{
		\rho_{j-1}
	}
	\xi^{1 + a - \delta}
	e^{
		- k
		\int_\xi^{
			\rho_{j-1}
		}
		b (\zeta)
		\,
		d\zeta
	}
	f (\xi, \beta M (\xi; u))
	\,
	d\xi,
	\:
	j = 2, \ldots, N + 1,
\end{align*}
we obtain
\begin{align*}
	&
	\int_{
		r_0
	}^{
		\rho_{i-1}
	}
	\xi^{1+a}
	e^{
		- k
		\int_\xi^t
		b (\zeta)
		\,
		d\zeta
	}
	f (\xi, \beta M (\xi; u))
	\,
	d\xi
	\\
	&
	\quad
	\le
	\sum_{j=2}^i
	\rho_{j-1}^\delta
	\int_{
		\rho_{j-2}
	}^{
		\rho_{j-1}
	}
	\xi^{1 + a - \delta}
	e^{
		- k
		\int_\xi^{
			\rho_{j-1}
		}
		b (\zeta)
		\,
		d\zeta
	}
	f (\xi, \beta M (\xi; u))
	\,
	d\xi
\end{align*}
for all $t \in (\rho_{i-1}, \rho_i)$, $i = 2, \ldots, N + 1$,
whence in accordance with the H\"older inequality it follows that
\begin{align*}
	&
	\int_{
		r_0
	}^{
		\rho_{i-1}
	}
	\xi^{1+a}
	e^{
		- k
		\int_\xi^t
		b (\zeta)
		\,
		d\zeta
	}
	f (\xi, \beta M (\xi; u))
	\,
	d\xi
	\le
	\left(
		\sum_{j=2}^i
		\rho_{j-1}^{\delta / (2 - p)}
	\right)^{2 - p}
	\\
	&
	\quad
	\times
	\left(
		\sum_{j=2}^i
		\left(
			\int_{
				\rho_{j-2}
			}^{
				\rho_{j-1}
			}
			\xi^{1 + a - \delta}
			e^{
				- k
				\int_\xi^{
					\rho_{j-1}
				}
				b (\zeta)
				\,
				d\zeta
			}
			f (\xi, \beta M (\xi; u))
			\,
			d\xi
		\right)^{1 / (p - 1)}
	\right)^{p - 1}
\end{align*}
for all $t \in (\rho_{i-1}, \rho_i)$, $i = 2, \ldots, N + 1$.
At the same time,
$$
	\sum_{j=2}^i
	\rho_{j-1}^{\delta / (2 - p)}
	=
	\rho_{i-1}^{\delta / (2 - p)}
	\sum_{j=2}^i
	\sigma^{- \delta (i - j) / (2 - p)}
	\le
	\frac{
		\rho_{i-1}^{\delta / (2 - p)}
	}{
		1 - \sigma^{- \delta / (2 - p)}
	},
	\:
	i = 2, \ldots, N + 1.
$$
Consequently, we have
\begin{align*}
	&
	\left(
		\int_{
			r_0
		}^{
			\rho_{i-1}
		}
		\xi^{1+a}
		e^{
			- k
			\int_\xi^t
			b (\zeta)
			\,
			d\zeta
		}
		f (\xi, \beta M (\xi; u))
		\,
		d\xi
	\right)^{1 / (p - 1)}
	\\
	&
	\quad
	\le
	\gamma_9
	\rho_{i-1}^{\delta / (p - 1)}
	\sum_{j=2}^i
	\left(
		\int_{
			\rho_{j-2}
		}^{
			\rho_{j-1}
		}
		\xi^{1 + a - \delta}
		e^{
			- k
			\int_\xi^{
				\rho_{j-1}
			}
			b (\zeta)
			\,
			d\zeta
		}
		f (\xi, \beta M (\xi; u))
		\,
		d\xi
	\right)^{1 / (p - 1)}
\end{align*}
for all $t \in (\rho_{i-1}, \rho_i)$, $i = 2, \ldots, N + 1$, 
where the constant $\gamma_9 > 0$ depends only on
$\delta$, $p$, $a$, and $\sigma$.
This immediately implies the estimate
\begin{align*}
	&
	\sum_{i=2}^{N+1}
	\int_{
		\rho_{i-1}
	}^{
		\rho_i
	}
	dt
	\,
	\left(
		\frac{
			1
		}{
			t^{1+a}
		}
		\int_{
			r_0
		}^{
			\rho_{i-1}
		}
		\xi^{1+a}
		e^{
			- k
			\int_\xi^t
			b (\zeta)
			\,
			d\zeta
		}
		f (\xi, \beta M (\xi; u))
		\,
		d\xi
	\right)^{1 / (p - 1)}
	\\
	&
	\quad
	\le
	\sum_{i=2}^{N+1}
	\int_{
		\rho_{i-1}
	}^{
		\rho_i
	}
	\frac{
		dt
	}{
		\rho_{i-1}^{(1 + a) / (p - 1)}
	}
	\left(
		\int_{
			r_0
		}^{
			\rho_{i-1}
		}
		\xi^{1+a}
		e^{
			- k
			\int_\xi^t
			b (\zeta)
			\,
			d\zeta
		}
		f (\xi, \beta M (\xi; u))
		\,
		d\xi
	\right)^{1 / (p - 1)}
	\\
	&
	\quad
	\le
	\gamma_9
	\sum_{i=2}^{N+1}
	\frac{
		\rho_i - \rho_{i-1}
	}{
		\rho_{i-1}^{(1 + a - \delta) / (p - 1)}
	}
	\\
	&
	\quad
	\phantom{{}\le}
	\times
	\sum_{j=2}^i
	\left(
		\int_{
			\rho_{j-2}
		}^{
			\rho_{j-1}
		}
		\xi^{1 + a - \delta}
		e^{
			- k
			\int_\xi^{
				\rho_{j-1}
			}
			b (\zeta)
			\,
			d\zeta
		}
		f (\xi, \beta M (\xi; u))
		\,
		d\xi
	\right)^{1 / (p - 1)}.
\end{align*}
Therefore, taking into account the fact that
\begin{align*}
	&
	\int_{
		\rho_{j-2}
	}^{
		\rho_{j-1}
	}
	\xi^{1 + a - \delta}
	e^{
		- k
		\int_\xi^{
			\rho_{j-1}
		}
		b (\zeta)
		\,
		d\zeta
	}
	f (\xi, \beta M (\xi; u))
	\,
	d\xi
	\\
	&
	\quad
	\le
	\rho_{j-1}^{1 + a - \delta}
	\int_{
		\rho_{j-2}
	}^{
		\rho_{j-1}
	}
	e^{
		- k
		\int_\xi^{
			\rho_{j-1}
		}
		b (\zeta)
		\,
		d\zeta
	}
	f (\xi, \beta M (\xi; u))
	\,
	d\xi
\end{align*}
and
$\rho_i - \rho_{i-1} \le \sigma \rho_{i-1}$,
$2 \le j \le i \le N + 1$,
we obtain 
\begin{align*}
	&
	\sum_{i=2}^{N+1}
	\int_{
		\rho_{i-1}
	}^{
		\rho_i
	}
	dt
	\,
	\left(
		\frac{
			1
		}{
			t^{1+a}
		}
		\int_{
			r_0
		}^{
			\rho_{i-1}
		}
		\xi^{1+a}
		e^{
			- k
			\int_\xi^t
			b (\zeta)
			\,
			d\zeta
		}
		f (\xi, \beta M (\xi; u))
		\,
		d\xi
	\right)^{1 / (p - 1)}
	\\
	&
	\quad
	\le
	\gamma_9
	\sigma
	\sum_{i=2}^{N+1}
	\sum_{j=2}^i
	\left(
		\frac{
			\rho_{j-1}
		}{
			\rho_{i-1}
		}
	\right)^{(a - p + 2 - \delta) / (p - 1)}
	\\
	&
	\quad
	\phantom{{}\le}
	\times
	\rho_{j-1}
	\left(
		\int_{
			\rho_{j-2}
		}^{
			\rho_{j-1}
		}
		e^{
			- k
			\int_\xi^{
				\rho_{j-1}
			}
			b (\zeta)
			\,
			d\zeta
		}
		f (\xi, \beta M (\xi; u))
		\,
		d\xi
	\right)^{1 / (p - 1)}
	\\
	&
	\quad
	=
	\gamma_9
	\sigma
	\sum_{j=2}^{N+1}
	\sum_{i=j}^{N+1}
	\left(
		\frac{
			\rho_{j-1}
		}{
			\rho_{i-1}
		}
	\right)^{(a - p + 2 - \delta) / (p - 1)}
	\\
	&
	\quad
	\phantom{{}=}
	\times
	\rho_{j-1}
	\left(
		\int_{
			\rho_{j-2}
		}^{
			\rho_{j-1}
		}
		e^{
			- k
			\int_\xi^{
				\rho_{j-1}
			}
			b (\zeta)
			\,
			d\zeta
		}
		f (\xi, \beta M (\xi; u))
		\,
		d\xi
	\right)^{1 / (p - 1)}.
\end{align*}
Since
$$
	\sum_{i=j}^{N+1}
	\left(
		\frac{
			\rho_{j-1}
		}{
			\rho_{i-1}
		}
	\right)^{(a - p + 2 - \delta) / (p - 1)}
	=
	\sum_{i=j}^{N+1}
	\sigma^{- (a - p + 2 - \delta) (i - j) / (p - 1)}
	\le
	\gamma_{10}
$$
for all $j = 2, \ldots, N + 1$, where the constant $\gamma_{10} > 0$ 
depends only on $\delta$, $p$, $a$, and $\sigma$, 
this again implies inequality~\eqref{PL3.2.6},
whence we immediately derive~\eqref{PL3.2.7}.

From~\eqref{PL3.2.2}, \eqref{PL3.2.3}, and~\eqref{PL3.2.7},
it follows that
\begin{align*}
	&
	M (r_1; u) - M (r_0; u)
	\\
	&
	\quad
	\ge
	\gamma_{11}
	\sum_{i=2}^{N+1}
	\int_{
		\rho_{i-1}
	}^{
		\rho_i
	}
	dt
	\,
	\left(
		\frac{
			1
		}{
			t^{1+a}
		}
		\int_{r_0}^t
		\xi^{1+a}
		e^{
			- k
			\int_\xi^t
			b (\zeta)
			\,
			d\zeta
		}
		f (\xi, \beta M (\xi; u))
		\,
		d\xi
	\right)^{1 / (p - 1)}
	\\
	&
	\quad
	=
	\gamma_{11}
	\int_{
		\rho_1
	}^{
		r_1
	}
	dt
	\,
	\left(
		\frac{
			1
		}{
			t^{1+a}
		}
		\int_{r_0}^t
		\xi^{1+a}
		e^{
			- k
			\int_\xi^t
			b (\zeta)
			\,
			d\zeta
		}
		f (\xi, \beta M (\xi; u))
		\,
		d\xi
	\right)^{1 / (p - 1)},
\end{align*}
where the constant $\gamma_{11} > 0$ depends only on
$n$, $p$, $a$, $k$, $\sigma$, $C_1$, $C_2$, and $\beta$.
Thus, to complete the proof it remains to combine the last formula with~\eqref{PL3.2.4}.
\end{proof}

\begin{Lemma}\label{L3.3}
In the hypotheses of Lemma~$\ref{L3.2}$, let $\sigma^{1/2} r_0 \le r_1$. Then
\begin{align*}
	&
	M (r_1; u) - M (r_0; u)
	\\
	&
	\quad
	\ge
	\gamma_{12}
	r_1^{
		- (a - p + 2) / (p - 1)
	}
	\left(
		\int_{r_0}^{r_1}
		\xi^{1+a}
		e^{
			- k
			\int_\xi^{r_1}
			b (\zeta)
			\,
			d\zeta
		}
		f (\xi, \beta M (\xi; u))
		\,
		d\xi
	\right)^{1 / (p - 1)},
\end{align*}
where the constant $\gamma_{12} > 0$ depends only on
$n$, $p$, $a$, $k$, $\sigma$, $C_1$, $C_2$, and $\beta$.
\end{Lemma}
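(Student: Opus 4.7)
The plan is to partition $[r_0, r_1]$ geometrically into pieces of uniformly bounded ratio, apply Corollary~\ref{C3.1} on each piece, and then compare the resulting sum with the integral on the right-hand side of the claim.

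First, I would choose $K = \lceil \log_\sigma(r_1/r_0) \rceil$, $\tau = (r_1/r_0)^{1/K}$, and $\rho_i = \tau^i r_0$ for $i = 0,1,\ldots,K$. Since $\sigma^{1/2} r_0 \le r_1$, a direct check gives $\tau \in [\sigma^{1/2}, \sigma]$, so $\rho_i/\rho_{i-1} = \tau \le \sigma$ (which makes Corollary~\ref{C3.1} applicable on each $[\rho_{i-1}, \rho_i]$) and $\rho_i - \rho_{i-1} = (1 - 1/\tau)\rho_i \ge (1 - \sigma^{-1/2})\rho_i$. The monotonicity of $M(\cdot;u)$ together with $\beta^{1/2} M(r_1;u) \le M(r_0;u)$ yields $\beta^{1/2} M(\rho_i;u) \le M(\rho_{i-1};u)$ for every $i$, so Corollary~\ref{C3.1} delivers
$$
	M(\rho_i;u) - M(\rho_{i-1};u)
	\ge
	\gamma_2 (\rho_i - \rho_{i-1}) Y_i^{1/(p-1)},
	\quad
	Y_i
	=
	\int_{\rho_{i-1}}^{\rho_i}
	e^{-k \int_\xi^{\rho_i} b(\zeta)\,d\zeta}
	f(\xi, \beta M(\xi;u))\,d\xi.
$$
Summing in $i$ and inserting the lower bound on $\rho_i - \rho_{i-1}$ gives $M(r_1;u) - M(r_0;u) \ge c_1 \sum_{i=1}^K \rho_i Y_i^{1/(p-1)}$ for a positive constant $c_1$.

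Next, I would split the integral in the claim along the partition and use $\xi \le \rho_i$ and $e^{-k\int_\xi^{r_1} b} \le e^{-k\int_\xi^{\rho_i} b}$ on $(\rho_{i-1}, \rho_i)$ to get $\int_{r_0}^{r_1} \xi^{1+a} e^{-k\int_\xi^{r_1} b(\zeta)\,d\zeta} f(\xi, \beta M(\xi;u))\,d\xi \le \sum_{i=1}^K \rho_i^{1+a} Y_i$. In the case $p \ge 2$, the subadditivity of $x \mapsto x^{1/(p-1)}$ reduces the matter to the termwise inequality
$$
	r_1^{-(a-p+2)/(p-1)} \rho_i^{(1+a)/(p-1)}
	=
	\rho_i \, (\rho_i/r_1)^{(a-p+2)/(p-1)}
	\le
	\rho_i,
$$
which holds because $\rho_i \le r_1$ and $a > p - 2$.

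The main obstacle will be the regime $1 < p < 2$, where $1/(p-1) > 1$ precludes direct subadditivity. Here I would adapt the device used in the proof of Lemma~\ref{L3.2}: pick $\delta \in (0, a - p + 2)$, factor $\rho_i^{1+a} = \rho_i^\delta \cdot \rho_i^{1+a-\delta}$, and apply H\"older's inequality with conjugate exponents $1/(2-p)$ and $1/(p-1)$ to obtain
$$
	\left(\sum_{i=1}^K \rho_i^{1+a} Y_i\right)^{1/(p-1)}
	\le
	\left(\sum_{i=1}^K \rho_i^{\delta/(2-p)}\right)^{(2-p)/(p-1)}
	\sum_{i=1}^K \rho_i^{(1+a-\delta)/(p-1)} Y_i^{1/(p-1)}.
$$
Since $\rho_i = \tau^{i-K} r_1$ with $\tau \ge \sigma^{1/2}$, the geometric sum in the first factor is bounded by a constant (depending on $p$, $\sigma$, $\delta$) times $r_1^{\delta/(2-p)}$, so that factor contributes at most $C r_1^{\delta/(p-1)}$. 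The termwise comparison then reduces to $(\rho_i/r_1)^{(a-p+2-\delta)/(p-1)} \le 1$, valid by the choice of $\delta$. Combining this with the lower bound $M(r_1;u) - M(r_0;u) \ge c_1 \sum_i \rho_i Y_i^{1/(p-1)}$ yields the claimed inequality in both cases.
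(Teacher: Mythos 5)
Your argument is correct, but it takes a genuinely different route from the paper. The paper's proof is short and leverages Lemma~\ref{L3.2} directly: it splits the integral at $\sigma^{-1/2}r_1$, applies Corollary~\ref{C3.1} once to the outer piece $[\sigma^{-1/2}r_1, r_1]$, and then invokes Lemma~\ref{L3.2} for the inner piece $[r_0,\sigma^{-1/2}r_1]$ (restricting the outer $dt$-integral in~\eqref{L3.2.1} to $[\sigma^{-1/2}r_1,r_1]$, on which $t^{-(1+a)}$ is comparable to $r_1^{-(1+a)}$, and replacing $e^{-k\int_\xi^t b}$ by the smaller $e^{-k\int_\xi^{r_1} b}$). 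You instead bypass Lemma~\ref{L3.2} entirely and build a geometric partition $\rho_i = \tau^i r_0$ with $\tau\in[\sigma^{1/2},\sigma]$, apply Corollary~\ref{C3.1} on every subinterval, telescope, and compare the resulting sum $\sum\rho_i Y_i^{1/(p-1)}$ against $\bigl(\sum\rho_i^{1+a}Y_i\bigr)^{1/(p-1)}$ via subadditivity when $p\ge 2$ or the H\"older/geometric-decay device when $1<p<2$. This is essentially a self-contained re-run of the partitioning machinery used inside the proof of Lemma~\ref{L3.2}, specialized to produce the boundary term $r_1^{-(a-p+2)/(p-1)}\bigl(\int_{r_0}^{r_1}\cdots\bigr)^{1/(p-1)}$ rather than the full iterated integral. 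What the paper's route buys is brevity and reuse of prior work; what yours buys is independence from Lemma~\ref{L3.2} and, arguably, a cleaner view of why the single power of $r_1$ appears. Both are valid; the constant you end up with depends on the same list $n,p,a,k,\sigma,C_1,C_2,\beta$ (and the auxiliary $\delta$ can be fixed as, say, $(a-p+2)/2$, so it adds no new dependence).
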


\begin{proof}
Using Corollary~\ref{C3.1}, one can show that
\begin{align*}
	&
	M (r_1; u) - M (\sigma^{- 1 / 2} r_1; u)
	\\
	&
	\quad
	\ge
	\gamma_2
	(1 - \sigma^{- 1 / 2})
	r_1
	\left(
		\int_{
			\sigma^{- 1 / 2} r_1
		}^{
			r_1
		}
		e^{
			- k
			\int_\xi^{r_1}
			b (\zeta)
			\,
			d\zeta
		}
		f (\xi, \beta M (\xi; u))
		\,
		d\xi
	\right)^{1 / (p - 1)}
	\\
	&
	\quad
	\ge
	\gamma_2
	(1 - \sigma^{- 1 / 2})
	r_1^{
		- (a - p + 2) / (p - 1)
	}
	\\
	&
	\quad
	\phantom{{}\ge}
	\times
	\left(
		\int_{
			\sigma^{- 1 / 2} r_1
		}^{
			r_1
		}
		\xi^{1+a}
		e^{
			- k
			\int_\xi^{r_1}
			b (\zeta)
			\,
			d\zeta
		}
		f (\xi, \beta M (\xi; u))
		\,
		d\xi
	\right)^{1 / (p - 1)}.
\end{align*}
Combining this with the inequality
\begin{align*}
	&
	M (r_1; u) - M (r_0; u)
	\\
	&
	\quad
	\ge
	\gamma_4
	\int_{
		\sigma^{- 1 / 2} r_1
	}^{
		r_1
	}
	dt
	\,
	\left(
		\frac{
			1
		}{
			t^{1+a}
		}
		\int_{
			r_0
		}^{
			\sigma^{- 1 / 2} r_1
		}
		\xi^{1+a}
		e^{
			- k
			\int_\xi^{r_1}
			b (\zeta)
			\,
			d\zeta
		}
		f (\xi, \beta M (\xi; u))
		\,
		d\xi
	\right)^{1 / (p - 1)}
	\\
	&
	\quad
	\ge
	\gamma_4
	(1 - \sigma^{-1/2})
	r_1^{- (a - p + 2) / (p - 1)}
	\\
	&
	\quad
	\phantom{{}\ge}
	\times
	\left(
		\int_{
			r_0
		}^{
			\sigma^{- 1 / 2} r_1
		}
		\xi^{1+a}
		e^{
			- k
			\int_\xi^{r_1}
			b (\zeta)
			\,
			d\zeta
		}
		f (\xi, \beta M (\xi; u))
		\,
		d\xi
	\right)^{1 / (p - 1)},
\end{align*}
which follows from Lemma~\ref{L3.2}, we complete the proof.
\end{proof}

Further in this section, we assume that
$$
	\beta
	=
	\left(
		\min
		\left\{
			\frac{
				1
			}{
				4^{p / (p - 1) + 2}
				\sigma^{1 / 2}
			},
			\frac{
				(1 - \sigma^{- 1 / 2})
				(a - p + 2)
			}{
				8^{p / (p - 1) + 1}
				(p - 1)
			}
		\right\}
	\right)^2
$$
and
$$
	\alpha
	=
	\left(
		\min
		\left\{
			\gamma_2
			\beta^{1 / 2},
			\frac{
				\gamma_2
			}{
				4^{p / (p - 1) + 1}
				\sigma^{1 / 2}
			},
			\frac{
				\gamma_3
			}{
				4^{p / (p - 1)}
			},
			\frac{
				\gamma_{12}
				(a - p + 2)
			}{
				4^{p / (p - 1)}
				(p - 1)
			},
			\frac{
				\gamma_4
			}{
			2^{p / (p - 1)}
			}
		\right\}
	\right)^{p - 1}.
$$

\begin{Lemma}\label{L3.4}
Suppose that
$M (r_0; u) \le \beta^{1/2} M (r_1; u) \le M (r_0 + 0; u)$
and
\begin{equation}
	M (r_0; u)
	\ge
	\int_{R_0}^{r_0}
	dt
	\,
	\left(
		\frac{\alpha}{t^{1+a}}
		\int_{R_0}^t
		\xi^{1+a}
		e^{
			- k
			\int_\xi^t
			b (\zeta)
			\,
			d\zeta
		}
		f (\xi, \beta M (\xi; u))
		\,
		d\xi
	\right)^{1 / (p-1)}
	\label{L3.4.1}
\end{equation}
for some real numbers
$R_0 < r_0 < r_1 < R_1$.
If
$\sigma^{1/2} r_0 \le r_1$,
then
\begin{align}
	&
	M (r_1; u) - M (r_0; u)
	\nonumber
	\\
	&
	\quad
	\ge
	2^{- p / (p - 1)}
	(1 - \sigma^{- 1/2})
	\beta^{- 1/2}
	r_0^{
		- (a - p + 2) / (p -1)
	}
	\nonumber
	\\
	&
	\quad
	\phantom{{}\ge}
	\times
	\left(
		\alpha
		\int_{R_0}^{r_0}
		\xi^{1+a}
		e^{
			- k
			\int_\xi^{r_0}
			b (\zeta)
			\,
			d\zeta
		}
		f (\xi, \beta M (\xi; u))
		\,
		d\xi
	\right)^{1 / (p-1)}.
	\label{L3.4.2}
\end{align}
\end{Lemma}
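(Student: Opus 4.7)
My plan is to reduce the inequality (L3.4.2) to a pointwise lower bound on $M(r_0;u)$ alone and then exploit the structure of the integral
\[
I := \int_{R_0}^{r_0} \xi^{1+a} e^{-k\int_\xi^{r_0} b(\zeta)\, d\zeta} f(\xi, \beta M(\xi;u))\, d\xi
\]
via a case split. From $M(r_0;u) \le \beta^{1/2} M(r_1;u)$ one immediately gets $M(r_1;u) - M(r_0;u) \ge (\beta^{-1/2}-1) M(r_0;u)$, and because the first term in the minimum defining $\beta^{1/2}$ is bounded by $1/(4^{p/(p-1)+2}\sigma^{1/2}) \le 1/64$, we have $\beta^{-1/2}-1 \ge \tfrac12\beta^{-1/2}$. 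It therefore suffices to prove
\[
M(r_0;u) \ge 2^{1-p/(p-1)}(1-\sigma^{-1/2})\, r_0^{-(a-p+2)/(p-1)} (\alpha I)^{1/(p-1)}.
\]

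Next I would split $I = I_1 + I_2$, where $I_1$ is the integral over $(R_0, \sigma^{-1/2} r_0)$ and $I_2$ over $(\sigma^{-1/2} r_0, r_0)$, and treat $I_1\ge I/2$ and $I_2\ge I/2$ separately. In the first case I would apply (L3.4.1) with the outer integration restricted to $t\in(\sigma^{-1/2}r_0, r_0)$. For such $t$ the weight $e^{-k\int_\xi^t b}\ge e^{-k\int_\xi^{r_0} b}$ whenever $\xi<t\le r_0$, so $J(t)\ge I_1 \ge I/2$ on the whole sub-interval. Combined with the trivial bound $t^{-(1+a)/(p-1)}\ge r_0^{-(1+a)/(p-1)}$, this produces
\[
M(r_0;u)\ge(\alpha I/2)^{1/(p-1)}\, r_0^{-(1+a)/(p-1)}(r_0-\sigma^{-1/2}r_0) = 2^{-1/(p-1)}(1-\sigma^{-1/2}) r_0^{-(a-p+2)/(p-1)}(\alpha I)^{1/(p-1)},
\]
which is exactly the required inequality (the exponent collapses since $1-(1+a)/(p-1)=-(a-p+2)/(p-1)$), and Case~1 uses no portion of $\alpha$'s cascade of minima.

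In the second case ($I_2\ge I/2$) I would apply Lemma~3.3 to the pair $(\sigma^{-1/2}r_0, r_0)$, noting that its $\sigma^{1/2}$-hypothesis is saturated. If additionally $\beta^{1/2}M(r_0;u)\le M(\sigma^{-1/2}r_0;u)$, then Lemma~3.3 yields $M(r_0;u)-M(\sigma^{-1/2}r_0;u)\ge \gamma_{12}r_0^{-(a-p+2)/(p-1)}I_2^{1/(p-1)}$; dividing by $1-\beta^{1/2}$ and using $I_2\ge I/2$ gives a bound compatible with the fourth term $\gamma_{12}(a-p+2)/(4^{p/(p-1)}(p-1))$ in the definition of $\alpha$. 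When this auxiliary hypothesis fails, the left-continuity (3.1) of $M$ furnishes a crossover point $r^*\in[\sigma^{-1/2}r_0, r_0)$ with $M(r^*;u)\le\beta^{1/2}M(r_0;u)\le M(r^*+0;u)$; I would then apply Corollary~3.1 to $(r^*+\varepsilon, r_0)$ and send $\varepsilon\to 0^+$, or, if the mass of $I_2$ sits in $(\sigma^{-1/2}r_0, r^*)$ rather than in $(r^*, r_0)$, apply Corollary~3.2 with $(\rho_0,\rho_1)=(\sigma^{-1/2}r_0,r^*)$. The remaining entries $\gamma_2\beta^{1/2}$, $\gamma_2/(4^{p/(p-1)+1}\sigma^{1/2})$, $\gamma_3/4^{p/(p-1)}$, and $\gamma_4/2^{p/(p-1)}$ in the minimum defining $\alpha$ are tailored so that each of these sub-applications produces the same target inequality.

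The main obstacle is precisely this failing sub-case of Case~2, where $M$ increases by more than a factor $\beta^{-1/2}$ on $(\sigma^{-1/2}r_0, r_0)$ while $I$ is simultaneously concentrated in that annulus: the factor $(r_0-r^*)/r_0$ coming from Corollary~3.1 degrades as $r^*$ approaches $r_0$, so one must balance this loss against the concentration of mass of $I_2$, either using Corollary~3.2 to capture the portion $(\sigma^{-1/2}r_0,r^*)$ or refining the choice of $\rho_0,\rho_1$. It is the need to cover every possible position of $r^*$ and every distribution of mass between the two halves of $I$ that forces the cascade of minima defining $\alpha$ and $\beta$.
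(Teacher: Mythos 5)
Your first case, where at least half the mass of
\[
I = \int_{R_0}^{r_0}\xi^{1+a}e^{-k\int_\xi^{r_0}b(\zeta)\,d\zeta}f(\xi,\beta M(\xi;u))\,d\xi
\]
sits in $(R_0,\sigma^{-1/2}r_0)$, is exactly the paper's second case: restrict the outer integral in~\eqref{L3.4.1} to $t\in(\sigma^{-1/2}r_0,r_0)$, use $e^{-k\int_\xi^t b}\ge e^{-k\int_\xi^{r_0}b}$ and $t^{-(1+a)/(p-1)}\ge r_0^{-(1+a)/(p-1)}$, and then convert a lower bound on $M(r_0;u)$ into one on $M(r_1;u)-M(r_0;u)$ via $M(r_1;u)-M(r_0;u)\ge\tfrac12\beta^{-1/2}M(r_0;u)$. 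That half is fine.

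The other case is where your plan breaks. Having committed up front to the reduction ``it suffices to lower bound $M(r_0;u)$'', you are stuck when $I_2 := \int_{\sigma^{-1/2}r_0}^{r_0}\xi^{1+a}e^{-k\int_\xi^{r_0}b}f\,d\xi$ carries at least half of $I$: if the $f$-mass is concentrated in a thin shell just below $r_0$, hypothesis~\eqref{L3.4.1} gives essentially no lower bound on $M(r_0;u)$, because the outer $t$-integral only sees that mass for $t$ in a tiny subinterval near $r_0$. You acknowledge this and suggest a repair via Lemma~\ref{L3.3} (which needs $\beta^{1/2}M(r_0;u)\le M(\sigma^{-1/2}r_0;u)$, not available) or a crossover point $r^*\in(\sigma^{-1/2}r_0,r_0)$ with Corollaries~\ref{C3.1}/\ref{C3.2}, but the resulting prefactors degenerate with $r_0-r^*$, which can be arbitrarily small, and nothing in the hypotheses controls it. The proposal does not close this.

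The paper never attempts to bound $M(r_0;u)$ from below in this case. Instead it exploits the unused part of the hypothesis, $r_1\ge\sigma^{1/2}r_0$, and works \emph{above} $r_0$: set $r_*=\max\{R_0,\sigma^{-1/2}r_0\}$ and apply Corollary~\ref{C3.1} with the pair $(r_0,\sigma^{1/2}r_0)$ (so $\sigma\,r_0\ge\sigma^{1/2}r_0$ and $\beta^{1/2}M(\sigma^{1/2}r_0;u)\le\beta^{1/2}M(r_1;u)\le M(r_0+0;u)$) and with $\rho_0=r_*$, $\rho_1=r_0$, noting $r_0-r_*\le(1-\sigma^{-1/2})r_0\le(\sigma^{1/2}-1)r_0$. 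This gives
\[
M(\sigma^{1/2}r_0;u)-M(r_0+0;u)\ge\gamma_2(\sigma^{1/2}-1)r_0\left(\int_{r_*}^{r_0}e^{-k\int_\xi^{r_0}b}f\,d\xi\right)^{1/(p-1)},
\]
and then the case assumption $\int_{r_*}^{r_0}\xi^{1+a}e^{-k\int_\xi^{r_0}b}f\,d\xi\ge I/2$ together with $\xi\le r_0$ yields the target directly, since $M(r_1;u)-M(r_0;u)\ge M(\sigma^{1/2}r_0;u)-M(r_0+0;u)$. That is the idea you are missing: in this case the growth of $M$ is extracted from the annulus $\{r_0<|x|<\sigma^{1/2}r_0\}$, which has a fixed relative width, rather than from a lower bound on $M(r_0;u)$.
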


\begin{proof}
At first, let
\begin{align}
	&
	\frac{1}{2}
	\int_{R_0}^{r_0}
	\xi^{1+a}
	e^{
		- k
		\int_\xi^{r_0}
		b (\zeta)
		\,
		d\zeta
	}
	f (\xi, \beta M (\xi; u))
	\,
	d\xi
	\nonumber
	\\
	&
	\quad
	\le
	\int_{r_*}^{r_0}
	\xi^{1+a}
	e^{
		- k
		\int_\xi^{r_0}
		b (\zeta)
		\,
		d\zeta
	}
	f (\xi, \beta M (\xi; u))
	\,
	d\xi,
	\label{PL3.4.1}
\end{align}
where $r_* = \max \{ R_0, r_0 / \sigma^{1/2} \}$.
By Corollary~\ref{C3.1}, we obtain
\begin{align*}
	&
	M (\sigma^{1/2} r_0; u) - M (r_0 + 0; u)
	\\
	&
	\quad
	\ge
	\gamma_2
	(\sigma^{1/2} - 1)
	r_0
	\left(
		\int_{r_*}^{r_0}
		e^{
			- k
			\int_\xi^{r_0}
			b (\zeta)
			\,
			d\zeta
		}
		f (\xi, \beta M (\xi; u))
		\,
		d\xi
	\right)^{1 / (p - 1)}
	\\
	&
	\quad
	\ge
	\gamma_2
	2^{- 1 / (p - 1)}
	(\sigma^{1/2} - 1)
	r_0^{
		- (a - p + 2) / (p -1)
	}
	\\
	&
	\quad
	\phantom{{}\ge}
	\times
	\left(
		\int_{R_0}^{r_0}
		\xi^{1+a}
		e^{
			- k
			\int_\xi^{r_0}
			b (\zeta)
			\,
			d\zeta
		}
		f (\xi, \beta M (\xi; u))
		\,
		d\xi
	\right)^{1 / (p - 1)}.
\end{align*}
The last relation immediately implies~\eqref{L3.4.2}.

Now, assume that~\eqref{PL3.4.1} is not valid. In this case, we have
$r_* = r_0 / \sigma^{1/2} > R_0$
and
\begin{align}
	&
	\frac{1}{2}
	\int_{R_0}^{r_0}
	\xi^{1+a}
	e^{
		- k
		\int_\xi^{r_0}
		b (\zeta)
		\,
		d\zeta
	}
	f (\xi, \beta M (\xi; u))
	\,
	d\xi
	\nonumber
	\\
	&
	\quad
	\le
	\int_{R_0}^{r_*}
	\xi^{1+a}
	e^{
		- k
		\int_\xi^{r_0}
		b (\zeta)
		\,
		d\zeta
	}
	f (\xi, \beta M (\xi; u))
	\,
	d\xi.
	\label{PL3.4.2}
\end{align}
From~\eqref{L3.4.1}, it can be seen that
\begin{align*}
	M (r_0; u)
	&
	{}
	\ge
	\int_{r_*}^{r_0}
	\frac{
		dt
	}{
		r_0^{(1 + a) / (p - 1)}
	}
	\,
	\left(
		\alpha
		\int_{R_0}^{r_*}
		\xi^{1+a}
		e^{
			- k
			\int_\xi^{r_0}
			b (\zeta)
			\,
			d\zeta
		}
		f (\xi, \beta M (\xi; u))
		\,
		d\xi
	\right)^{1 / (p-1)}
	\\
	&
	=
	(1 - \sigma^{- 1/2})
	r_0^{
		- (a - p + 2) / (p -1)
	}
	\\
	&
	\phantom{{}=}
	\times
	\left(
		\alpha
		\int_{R_0}^{r_*}
		\xi^{1+a}
		e^{
			- k
			\int_\xi^{r_0}
			b (\zeta)
			\,
			d\zeta
		}
		f (\xi, \beta M (\xi; u))
		\,
		d\xi
	\right)^{1 / (p-1)}.
\end{align*}
Combining this with formula~\eqref{PL3.4.2} and the inequality
$
	M (r_1; u) - M (r_0; u) 
	\ge 
	(\beta^{- 1 / 2} - 1) M (r_0; u)
	\ge
	\beta^{- 1 / 2}
	M (r_0; u) / 2,
$
we again obtain~\eqref{L3.4.2}.
The proof is completed.
\end{proof}

\begin{Lemma}\label{L3.5}
Let
$R_0 < r_0 < r < R_1$,
$r \le \sigma^{1/2} r_0$
and, moreover,
$$
	M (\zeta; u)
	\ge
	\int_{R_0}^\zeta
	dt
	\,
	\left(
		\frac{\alpha}{t^{1+a}}
		\int_{R_0}^t
		\xi^{1+a}
		e^{
			- k
			\int_\xi^t
			b (\zeta)
			\,
			d\zeta
		}
		f (\xi, \beta M (\xi; u))
		\,
		d\xi
	\right)^{1 / (p-1)}
$$
for all $\zeta \in (R_0, r_0)$.
If
$M (r_0; u) \le \beta^{1/2} M (r; u) \le M (r_0 + 0; u)$,
then
\begin{align}
	&
	M (r; u) - M (r_0; u)
	\nonumber
	\\
	&
	\quad
	\ge
	\frac{
		2^{p / (p - 1)} (p - 1)
	}{
		a - p +2
	}
	(
		r_0^{
			- (a - p + 2) / (p -1)
		}
		-
		r^{
			- (a - p + 2) / (p -1)
		}
	)
	\nonumber
	\\
	&
	\quad
	\phantom{{}\ge}
	\times
	\left(
		\alpha
		\int_{R_0}^{r_0}
		\xi^{1+a}
		e^{
			- k
			\int_\xi^{r_0}
			b (\zeta)
			\,
			d\zeta
		}
		f (\xi, \beta M (\xi; u))
		\,
		d\xi
	\right)^{1 / (p-1)}.
	\label{L3.5.1}
\end{align}
\end{Lemma}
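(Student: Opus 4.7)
The plan is to combine the hypothesis, which controls $M$ on $(R_0, r_0)$, with the jump condition $M(r_0;u)\le\beta^{1/2}M(r;u)\le M(r_0+0;u)$. First, I would take $\zeta \to r_0^-$ in the hypothesis and invoke (3.1) (so $M(r_0-0;u) = M(r_0;u)$) to obtain
\[
M(r_0;u) \ge \int_{R_0}^{r_0} dt\,\left(\frac{\alpha}{t^{1+a}} I(t)\right)^{1/(p-1)}, \quad I(t) = \int_{R_0}^t \xi^{1+a} e^{-k\int_\xi^t b(\zeta)\,d\zeta} f(\xi, \beta M(\xi;u))\,d\xi.
\]
The jump inequality then yields $M(r;u) - M(r_0;u) \ge (\beta^{-1/2} - 1) M(r_0;u)$. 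Writing $\tau = (a-p+2)/(p-1)$, the constraint $r \le \sigma^{1/2} r_0$ gives $r_0^{-\tau} - r^{-\tau} \le r_0^{-\tau}(1 - \sigma^{-\tau/2})$, so the claim reduces to establishing a bound of the form $M(r_0;u) \ge c\,\beta^{1/2} r_0^{-\tau}(1-\sigma^{-\tau/2})(\alpha J)^{1/(p-1)}$ for an explicit $c$ absorbed by $\beta$, where $J = I(r_0)$.

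I would next dichotomize according to whether $\int_{R_0}^{r_0/\sigma^{1/2}} \xi^{1+a} e^{-k\int_\xi^{r_0} b}f\,d\xi \ge J/2$ or the complementary inequality holds. In the first subcase, the estimate $e^{-k\int_\xi^t b} \ge e^{-k\int_\xi^{r_0} b}$ (valid for $\xi \le t \le r_0$) forces $I(t) \ge J/2$ on $[r_0/\sigma^{1/2}, r_0]$; restricting the outer integration to that interval and computing $\int_{r_0/\sigma^{1/2}}^{r_0} t^{-(1+a)/(p-1)}\,dt = \tau^{-1} r_0^{-\tau}(\sigma^{\tau/2} - 1) \ge \tau^{-1} r_0^{-\tau}(1 - \sigma^{-\tau/2})$ produces the required lower bound on $M(r_0;u)$ directly.

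In the second subcase the hypothesis integral alone degenerates, since $I(t)$ may be small for $t$ away from $r_0$; I would instead invoke Corollary~\ref{C3.1} for the pair $(r_0+\varepsilon, r)$ with $(\rho_0, \rho_1) \subseteq [r_0/\sigma^{1/2}, r_0]$, letting $\varepsilon \to 0^+$. The jump condition $\beta^{1/2} M(r;u) \le M(r_0+0;u) \le M(r_0+\varepsilon;u)$ makes this admissible, and the elementary inequality $\int_{\rho_0}^{\rho_1} e^{-k\int_\xi^{\rho_1} b}f\,d\xi \ge r_0^{-(1+a)} \int_{\rho_0}^{\rho_1} \xi^{1+a} e^{-k\int_\xi^{\rho_1} b}f\,d\xi$ (using $\xi \le r_0$) together with the concentration of $J$ near $r_0$ turns the conclusion of Corollary~\ref{C3.1} into a bound on $M(r;u) - M(r_0+0;u)$ of the required shape. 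The chief technical obstacle is the Corollary~\ref{C3.1} constraint $\rho_1 - \rho_0 \le r - r_0$, which for $r$ close to $r_0$ restricts $(\rho_0,\rho_1)$ to a short subinterval and degrades the bound; this is offset by the matching decay of $r_0^{-\tau} - r^{-\tau}$ as $r \to r_0^+$, with the constants $\alpha$ and $\beta$ fixed after Lemma~\ref{L3.3} absorbing the remaining multiplicative factors.
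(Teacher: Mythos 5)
Your first subcase is sound: taking $\zeta\to r_0^-$ in the hypothesis, using $M(r_0-0;u)=M(r_0;u)$, the monotonicity $e^{-k\int_\xi^t b}\ge e^{-k\int_\xi^{r_0}b}$ for $t\le r_0$, and restricting the outer integral to $[r_0/\sigma^{1/2},r_0]$ does give $M(r_0;u)\ge \frac{p-1}{a-p+2}r_0^{-\tau}(1-\sigma^{-\tau/2})(\alpha J/2)^{1/(p-1)}$, which the jump $M(r;u)-M(r_0;u)\ge(\beta^{-1/2}-1)M(r_0;u)$ converts into the desired bound once one checks that the paper's choice of $\beta$ satisfies $(\beta^{-1/2}-1)2^{-1/(p-1)}\ge 2^{p/(p-1)}$.

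The second subcase, however, has a genuine gap that the constants cannot repair. Suppose the mass of $J$ is spread essentially uniformly over $[r_0/\sigma^{1/2},r_0]$, an interval of fixed length $L=r_0(1-\sigma^{-1/2})$. Corollary~\ref{C3.1} with $\rho_1-\rho_0\le r-r_0$ then sees at most a fraction $\sim(r-r_0)/L$ of $J$, so the resulting lower bound behaves like
\[
\gamma_2\,(r-r_0)\,r_0^{-(1+a)/(p-1)}\Bigl(\tfrac{r-r_0}{L}J\Bigr)^{1/(p-1)}
\sim (r-r_0)^{p/(p-1)},
\]
while the target, via~\eqref{PL3.5.2}, behaves like $r_0^{-(1+a)/(p-1)}(r-r_0)(\alpha J)^{1/(p-1)}\sim(r-r_0)$. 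The ratio of your bound to the target is $\sim\bigl((r-r_0)/L\bigr)^{1/(p-1)}\to 0$ as $r\to r_0^+$, so the ``matching decay'' you invoke does not hold — the constraint $\rho_1-\rho_0\le r-r_0$ costs you an extra power $(r-r_0)^{1/(p-1)}$ that $r_0^{-\tau}-r^{-\tau}$ does not produce, and since $\alpha$, $\beta$ are fixed they cannot absorb a factor tending to zero. Note also that your dichotomy does not cleanly separate ``$J$ concentrated in a window of width $\le r-r_0$'' (where Corollary~\ref{C3.1} would suffice) from ``$J$ spread over $[r_0/\sigma^{1/2},r_0]$'' (where the hypothesis on $M(\zeta;u)$ in fact gives a strong bound, as one can check on a uniform-density example): both sit inside your second subcase.

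The paper's proof is structured entirely differently to cope with precisely this. It sets $r_1=\max\{R_0,\,r_0-\sigma^{-1/2}(r-r_0)/2\}$ so that Corollary~\ref{C3.1}, applied to the window $(r_1,r_0)$ of width $\sim r-r_0$, yields estimate~\eqref{PL3.5.1} covering $\int_{r_1}^{r_0}$ only, and then proves~\eqref{PL3.5.3} covering the remaining $\int_{R_0}^{r_1}$ by induction on the number $N$ of $\beta^{1/2}$-steps separating $M(r_1;u)$ from $M(R_0+0;u)$. In the inductive step it builds a decreasing chain $r_1>r_2>\cdots>r_l=R_0$ via~\eqref{PL3.5.4}, introduces the index set $\Xi$ tracking where the mass of the integral concentrates and how fast the $r_i$ decrease, and distinguishes four mutually exhaustive cases (1)--(4), each dispatched by a different tool (Lemma~\ref{L3.3}, Corollary~\ref{C3.1} with~\eqref{PL3.5.2}, Corollary~\ref{C3.2}, and Lemma~\ref{L3.4} respectively, combined with~\eqref{PL3.5.7}--\eqref{PL3.5.8} or~\eqref{PL3.5.11}). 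Your one-shot dichotomy replaces all of that, and it is exactly the inductive descent over the chain $r_i$ that your proposal lacks.
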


\begin{proof}
We put 
$
	r_1 
	=
	\max
	\{
		R_0,
		r_0 - \sigma^{-1/2} (r - r_0) / 2
	\}.
$
By Corollary~\ref{C3.1}, 
$$
	M (r; u) - M (r_0 + 0; u)
	\ge
	\gamma_2
	(r - r_0)
	\left(
		\int_{r_1}^{r_0}
		e^{
			- k
			\int_\xi^{r_0}
			b (\zeta)
			\,
			d\zeta
		}
		f (\xi, \beta M (\xi; u))
		\,
		d\xi
	\right)^{1 / (p - 1)}.
$$
Combining this with the inequality
\begin{equation}
	\frac{
		p - 1
	}{
		a - p +2
	}
	(
		r_0^{
			- (a - p + 2) / (p -1)
		}
		-
		r^{
			- (a - p + 2) / (p -1)
		}
	)
	\le
	r_0^{- (1 + a) / (p - 1)}
	(r - r_0),
	\label{PL3.5.2}
\end{equation}
we have
\begin{align}
	&
	M (r; u) - M (r_0 + 0; u)
	\nonumber
	\\
	&
	\quad
	\ge
	\frac{
		4^{p / (p - 1)}
		(p - 1)
	}{
		a - p +2
	}
	(
		r_0^{
			- (a - p + 2) / (p -1)
		}
		-
		r^{
			- (a - p + 2) / (p -1)
		}
	)
	\nonumber
	\\
	&
	\quad
	\phantom{{}\ge}
	\times
	\left(
		\alpha
		\int_{r_1}^{r_0}
		\xi^{1+a}
		e^{
			- k
			\int_\xi^{r_0}
			b (\zeta)
			\,
			d\zeta
		}
		f (\xi, \beta M (\xi; u))
		\,
		d\xi
	\right)^{1 / (p-1)}.
	\label{PL3.5.1}
\end{align}

The proof of Lemma~\ref{L3.5} is by induction over the positive integer $N$ 
defined as follows: $N = 1$ if $r_1 = R_0$; otherwise $N$ is the minimal 
positive integer such that 
${M (R_0 + 0; u)} \ge \beta^{N/2} {M (r_1; u)}$.

Consider the case of $N = 1$. If $r_1 - R_0 \le r_0 - r_1$, then
$r_0 \le \sigma^{1/2} R_0$. 
Hence, repeating the arguments given in the proof of~\eqref{PL3.5.1} 
with $r_1$ replaced by $R_0$, we obviously obtain~\eqref{L3.5.1}.
Let $r_1 - R_0 > r_0 - r_1$.
For $r_1 \le \sigma^{1/2} R_0$, taking into account
Corollary~\ref{C3.1}, we have
$$
	M (r_1; u) - M (R_0 + 0; u)
	\ge
	\gamma_2
	(r_1 - R_0)
	\left(
		\int_{R_0}^{r_1}
		e^{
			- k
			\int_\xi^{r_1}
			b (\zeta)
			\,
			d\zeta
		}
		f (\xi, \beta M (\xi; u))
		\,
		d\xi
	\right)^{1 / (p - 1)}.
$$
The last formula, bound~\eqref{PL3.5.2}, and the relation
$r_1 - R_0 > r_0 - r_1 = \sigma^{- 1 / 2} {(r - r_0) / 2}$
enable us to assert that
\begin{align*}
	&
	M (r_1; u) - M (R_0 + 0; u)
	\\
	&
	\quad
	\ge
	\frac{
		\gamma_2 
		(p - 1)
	}{
		2 \sigma^{1/2} (a - p +2)
	}
	(
		r_0^{
			- (a - p + 2) / (p -1)
		}
		-
		r^{
			- (a - p + 2) / (p -1)
		}
	)
	\nonumber
	\\
	&
	\quad
	\phantom{{}\ge}
	\times
	\left(
		\int_{R_0}^{r_1}
		\xi^{1+a}
		e^{
			- k
			\int_\xi^{r_1}
			b (\zeta)
			\,
			d\zeta
		}
		f (\xi, \beta M (\xi; u))
		\,
		d\xi
	\right)^{1 / (p-1)}.
\end{align*}
In so doing,
\begin{equation}
	M (r; u) - M (r_0; u) 
	\ge 
	(\beta^{- 1 / 2} - 1) M (r_0; u)
	\ge
	M (r_1; u);
	\label{PL3.5.12}
\end{equation}
therefore, we obtain
\begin{align*}
	&
	M (r; u) - M (r_0; u)
	\\
	&
	\quad
	\ge
	\frac{
		\gamma_2
		(p - 1)
	}{
		2
		\sigma^{1/2}
		(a - p +2)
	}
	(
		r_0^{
			- (a - p + 2) / (p -1)
		}
		-
		r^{
			- (a - p + 2) / (p -1)
		}
	)
	\\
	&
	\quad
	\phantom{{}\ge}
	\times
	\left(
		\int_{R_0}^{r_1}
		\xi^{1+a}
		e^{
			- k
			\int_\xi^{r_1}
			b (\zeta)
			\,
			d\zeta
		}
		f (\xi, \beta M (\xi; u))
		\,
		d\xi
	\right)^{1 / (p-1)},
\end{align*}
whence it can be seen that
\begin{align}
	&
	M (r; u) - M (r_0; u)
	\nonumber
	\\
	&
	\quad
	\ge
	\frac{
		4^{p / (p - 1)}
		(p - 1)
	}{
		a - p +2
	}
	(
		r_0^{
			- (a - p + 2) / (p -1)
		}
		-
		r^{
			- (a - p + 2) / (p -1)
		}
	)
	\nonumber
	\\
	&
	\quad
	\phantom{{}\ge}
	\times
	\left(
		\alpha
		\int_{R_0}^{r_1}
		\xi^{1+a}
		e^{
			- k
			\int_\xi^{r_0}
			b (\zeta)
			\,
			d\zeta
		}
		f (\xi, \beta M (\xi; u))
		\,
		d\xi
	\right)^{1 / (p-1)}.
	\label{PL3.5.3}
\end{align}

On the other hand, if of $r_1 > \sigma^{1/2} R_0$, 
then in accordance with Lemma~\ref{L3.3} we have
\begin{align*}
	&
	M (r_1; u) - M (R_0 + 0; u)
	\\
	&
	\quad
	\ge
	\gamma_{12}
	r_1^{
		- (a - p + 2) / (p - 1)
	}
	\left(
		\int_{R_0}^{r_1}
		\xi^{1+a}
		e^{
			- k
			\int_\xi^{r_1}
			b (\zeta)
			\,
			d\zeta
		}
		f (\xi, \beta M (\xi; u))
		\,
		d\xi
	\right)^{1 / (p - 1)}
	\\
	&
	\quad
	\ge
	\gamma_{12}
	(
		r_0^{
			- (a - p + 2) / (p - 1)
		}
		-
		r^{
			- (a - p + 2) / (p - 1)
		}
	)
	\\
	&
	\quad
	\phantom{{}\ge}
	\times
	\left(
		\int_{R_0}^{r_1}
		\xi^{1+a}
		e^{
			- k
			\int_\xi^{r_1}
			b (\zeta)
			\,
			d\zeta
		}
		f (\xi, \beta M (\xi; u))
		\,
		d\xi
	\right)^{1 / (p - 1)}.
\end{align*}
By~\eqref{PL3.5.12}, this implies the estimate
\begin{align*}
	&
	M (r; u) - M (r_0; u)
	\\
	&
	\quad
	\ge
	\gamma_{12}
	(
		r_0^{
			- (a - p + 2) / (p - 1)
		}
		-
		r^{
			- (a - p + 2) / (p - 1)
		}
	)
	\\
	&
	\quad
	\phantom{{}\ge}
	\times
	\left(
		\int_{R_0}^{r_1}
		\xi^{1+a}
		e^{
			- k
			\int_\xi^{r_1}
			b (\zeta)
			\,
			d\zeta
		}
		f (\xi, \beta M (\xi; u))
		\,
		d\xi
	\right)^{1 / (p - 1)},
\end{align*}
whence~\eqref{PL3.5.3} follows again.
Finally, summing~\eqref{PL3.5.1} and~\eqref{PL3.5.3}, we derive~\eqref{L3.5.1}.

Assume further that Lemma~\ref{L3.5} is proved for all $N \le N_0$,
where $N_0$ is  a positive integer. We shall prove the lemma
for $N = N_0 + 1$.

Let us construct the finite sequence of real numbers $R_0 = r_l < \ldots < r_2 < r_1$.
The real number $r_1$ is defined in the beginning of the proof.
If $r_i$ is already known, then we put
\begin{equation}
	r_{i+1}
	=
	\inf
	\{
		\xi \in (R_0, r_i)
		:
		M (\xi; u) > \beta^{1/2} M (r_i; u)
	\}.
	\label{PL3.5.4}
\end{equation}
In the case of $r_{i+1} = R_0$, we set $l = i + 1$ and stop.

From~(\ref{3.1}), it can be seen that
$
    \{
        \xi \in (R_0, r_i)
        :
        M (\xi; u) > \beta^{1/2} M (r_i; u)
    \}
    \ne
    \emptyset
$
for all $i = 1, \ldots, l - 1$.
Thus, the right-hand side of~(\ref{PL3.5.4}) is well-defined.
Also note that $l \ge 3$ as $N \ge 2$.

By $\Xi$ we mean the set of integers $\nu \in \{ 2, \ldots, l - 1 \}$
safisfying the conditions
$
	r_{i-1} \le \sigma^{1/2} r_i,
$
$
	r_{i-1} - r_i
	\le
	2^{-i + 1}
	(r_0 - r_1),
$
and
$$
	\int_{R_0}^{r_i}
	\xi^{1+a}
	e^{
		- k
		\int_\xi^{r_0}
		b (\zeta)
		\,
		d\zeta
	}
	f (\xi, \beta M (\xi; u))
	\,
	d\xi
	\ge
	\frac{1}{2}
	\int_{R_0}^{r_{i-1}}
	\xi^{1+a}
	e^{
		- k
		\int_\xi^{r_0}
		b (\zeta)
		\,
		d\zeta
	}
	f (\xi, \beta M (\xi; u))
	\,
	d\xi
$$
for all $i \in \{ 2, \ldots, \nu \}$.
We put $j = \max \Xi$ if the set $\Xi$ is not empty and $j = 1$, otherwise. 

As indicated above, to prove the lemma it is sufficient to establish the validity
of estimate~\eqref{PL3.5.3}.
It presents no special problems to verify that at least
one of the following propositions is valid:

\begin{enumerate}
\item [(1)]
$\sigma^{1/2} r_{j+1} < r_j$ and
\begin{align}
	&
	\int_{
		r_{j+1}
	}^{
		r_j
	}
	\xi^{1+a}
	e^{
		- k
		\int_\xi^{r_0}
		b (\zeta)
		\,
		d\zeta
	}
	f (\xi, \beta M (\xi; u))
	\,
	d\xi
	\nonumber
	\\
	&
	\quad
	\ge
	\frac{1}{2}
	\int_{R_0}^{r_j}
	\xi^{1+a}
	e^{
		- k
		\int_\xi^{r_0}
		b (\zeta)
		\,
		d\zeta
	}
	f (\xi, \beta M (\xi; u))
	\,
	d\xi;
	\label{PL3.5.5}
\end{align}

\item [(2)]
$\sigma^{1/2} r_{j+1} \ge r_j$
and
$r_j - r_{j+1} > 2^{-j} (r_0 - r_1)$;

\item [(3)]
$r_j - r_{j+1} \le 2^{-j} (r_0 - r_1)$
and, moreover, relation~(\ref{PL3.5.5}) holds;

\item [(4)]
$\sigma^{1/2} r_{j+1} < r_j$ and
$$
	\int_{
		R_0
	}^{
		r_{j+1}
	}
	\xi^{1+a}
	e^{
		- k
		\int_\xi^{r_0}
		b (\zeta)
		\,
		d\zeta
	}
	f (\xi, \beta M (\xi; u))
	\,
	d\xi
	\ge
	\frac{1}{2}
	\int_{R_0}^{r_j}
	\xi^{1+a}
	e^{
		- k
		\int_\xi^{r_0}
		b (\zeta)
		\,
		d\zeta
	}
	f (\xi, \beta M (\xi; u))
	\,
	d\xi.
$$
\end{enumerate}

In case (1), Lemma~\ref{L3.3} implies the estimate
\begin{align*}
	&
	M (r_j; u) - M (r_{j+1} + 0; u)
	\\
	&
	\quad
	\ge
	\gamma_{12}
	r_j^{
		- (a - p + 2) / (p - 1)
	}
	\left(
		\int_{r_{j+1}}^{r_j}
		\xi^{1+a}
		e^{
			- k
			\int_\xi^{r_j}
			b (\zeta)
			\,
			d\zeta
		}
		f (\xi, \beta M (\xi; u))
		\,
		d\xi
	\right)^{1 / (p - 1)}
\end{align*}
from which, by the inequalities
\begin{equation}
	M (r; u) - M (r_0; u)
	\ge
	\frac{1}{2}
	\beta^{- 1 / 2}
	M (r_0; u)
	\ge
	\frac{1}{2}
	\beta^{- j / 2}
	M (r_j; u)
	\label{PL3.5.7}
\end{equation}
and
\begin{align}
	&
	\int_{r_{j+1}}^{r_j}
	\xi^{1+a}
	e^{
		- k
		\int_\xi^{r_j}
		b (\zeta)
		\,
		d\zeta
	}
	f (\xi, \beta M (\xi; u))
	\,
	d\xi
	\nonumber
	\\
	&
	\quad
	\ge
	\int_{r_{j+1}}^{r_j}
	\xi^{1+a}
	e^{
		- k
		\int_\xi^{r_0}
		b (\zeta)
		\,
		d\zeta
	}
	f (\xi, \beta M (\xi; u))
	\,
	d\xi
	\nonumber
	\\
	&
	\quad
	\ge
	\frac{1}{2^j}
	\int_{R_0}^{r_1}
	\xi^{1+a}
	e^{
		- k
		\int_\xi^{r_0}
		b (\zeta)
		\,
		d\zeta
	}
	f (\xi, \beta M (\xi; u))
	\,
	d\xi,
	\label{PL3.5.8}
\end{align}
we obtain
\begin{align*}
	&
	M (r; u) - M (r_0; u)
	\\
	&
	\quad
	\ge
	\gamma_{12}
	2^{- j / (p  - 1) - 1}
	\beta^{- j / 2}
	(
		r_0^{
			- (a - p + 2) / (p - 1)
		}
		-
		r^{
			- (a - p + 2) / (p - 1)
		}
	)
	\\
	&
	\quad
	\phantom{{}\ge}
	\times
	\left(
		\int_{R_0}^{r_1}
		\xi^{1+a}
		e^{
			- k
			\int_\xi^{r_0}
			b (\zeta)
			\,
			d\zeta
		}
		f (\xi, \beta M (\xi; u))
		\,
		d\xi
	\right)^{1 / (p - 1)}.
\end{align*}
The last formula immediately implies~\eqref{PL3.5.3}.

Let proposition~(2) be valid.
If~\eqref{PL3.5.5} holds, then 
$$
	M (r_j; u) - M (r_{j+1} + 0; u)
	\ge
	\gamma_2
	(r_j - r_{j+1})
	\left(
		\int_{
			r_{j+1}
		}^{
			r_j
		}
		e^{
			- k
			\int_\xi^{r_j}
			b (\zeta)
			\,
			d\zeta
		}
		f (\xi, \beta M (\xi; u))
		\,
		d\xi
	\right)^{1 / (p-1)}
$$
by Corollary~\ref{C3.1}. Therefore, taking into account~\eqref{PL3.5.2} 
and the fact that
$
	r_j - r_{j+1}
	>
	2^{-j}
	(r_0 - r_1)
	=
	2^{- j - 1}
	\sigma^{- 1 / 2}
	(r - r_0),
$
we have
\begin{align*}
	&
	M (r_j; u) - M (r_{j+1} + 0; u)
	\\
	&
	\quad
	\ge
	\frac{
		\gamma_2 
		(p - 1)
	}{
		2^{j + 1} \sigma^{1/2} (a - p +2)
	}
	(
		r_0^{
			- (a - p + 2) / (p -1)
		}
		-
		r^{
			- (a - p + 2) / (p -1)
		}
	)
	\\
	&
	\quad
	\phantom{{}\ge}
	\times
	\left(
		\int_{
			r_{j+1}
		}^{
			r_j
		}
		\xi^{1+a}
		e^{
			- k
			\int_\xi^{r_j}
			b (\zeta)
			\,
			d\zeta
		}
		f (\xi, \beta M (\xi; u))
		\,
		d\xi
	\right)^{1 / (p-1)}.
\end{align*}
Combining this with inequalities~\eqref{PL3.5.7} and~\eqref{PL3.5.8},
one can show that
\begin{align*}
	&
	M (r; u) - M (r_0; u)
	\\
	&
	\quad
	\ge
	\frac{
		\gamma_2
		(p - 1)
	}{
		2^{j p / (p - 1) + 2}
		\beta^{j / 2}
		\sigma^{1/2} 
		(a - p +2)
	}
	(
		r_0^{
			- (a - p + 2) / (p -1)
		}
		-
		r^{
			- (a - p + 2) / (p -1)
		}
	)
	\\
	&
	\quad
	\phantom{{}\ge}
	\times
	\left(
		\int_{
			R_0
		}^{
			r_1
		}
		\xi^{1+a}
		e^{
			- k
			\int_\xi^{r_0}
			b (\zeta)
			\,
			d\zeta
		}
		f (\xi, \beta M (\xi; u))
		\,
		d\xi
	\right)^{1 / (p-1)},
\end{align*}
whence~\eqref{PL3.5.3} follows.
On the other hand, if~\eqref{PL3.5.5} is not fulfilled, 
then $r_{j+1} > R_0$ and
\begin{align}
	&
	\int_{
		R_0
	}^{
		r_{j+1}
	}
	\xi^{1+a}
	e^{
		- k
		\int_\xi^{
			r_{j+1}
		}
		b (\zeta)
		\,
		d\zeta
	}
	f (\xi, \beta M (\xi; u))
	\,
	d\xi
	\nonumber
	\\
	&
	\quad
	\ge
	\int_{
		R_0
	}^{
		r_{j+1}
	}
	\xi^{1+a}
	e^{
		- k
		\int_\xi^{r_0}
		b (\zeta)
		\,
		d\zeta
	}
	f (\xi, \beta M (\xi; u))
	\,
	d\xi
	\nonumber
	\\
	&
	\quad
	\ge
	\frac{1}{2}
	\int_{R_0}^{r_j}
	\xi^{1+a}
	e^{
		- k
		\int_\xi^{r_0}
		b (\zeta)
		\,
		d\zeta
	}
	f (\xi, \beta M (\xi; u))
	\,
	d\xi
	\nonumber
	\\
	&
	\quad
	\ge
	\frac{1}{2^j}
	\int_{R_0}^{r_1}
	\xi^{1+a}
	e^{
		- k
		\int_\xi^{r_0}
		b (\zeta)
		\,
		d\zeta
	}
	f (\xi, \beta M (\xi; u))
	\,
	d\xi.
	\label{PL3.5.11}
\end{align}
By the induction hypothesis, we have
\begin{align*}
	&
	M (r_j; u) - M (r_{j+1}; u)
	\\
	&
	\quad
	\ge
	\frac{
		2^{p / (p - 1)} (p - 1)
	}{
		a - p +2
	}
	(
		r_{j+1}^{
			- (a - p + 2) / (p -1)
		}
		-
		r_j^{
			- (a - p + 2) / (p -1)
		}
	)
	\\
	&
	\quad
	\phantom{{}\ge}
	\times
	\left(
		\alpha
		\int_{
			R_0
		}^{
			r_{j+1}
		}
		\xi^{1+a}
		e^{
			- k
			\int_\xi^{
				r_{j+1}
			}
			b (\zeta)
			\,
			d\zeta
		}
		f (\xi, \beta M (\xi; u))
		\,
		d\xi
	\right)^{1 / (p-1)};
\end{align*}
therefore,
\begin{align*}
	&
	M (r_j; u) - M (r_{j+1}; u)
	\\
	&
	\quad
	\ge
	\frac{
		2^{p / (p - 1)}
		(p - 1)
	}{
		2^{j / (p - 1)}
		(a - p +2)
	}
	(
		r_{j+1}^{
			- (a - p + 2) / (p -1)
		}
		-
		r_j^{
			- (a - p + 2) / (p -1)
		}
	)
	\nonumber
	\\
	&
	\quad
	\phantom{{}\ge}
	\times
	\left(
		\alpha
		\int_{
			R_0
		}^{
			r_1
		}
		\xi^{1+a}
		e^{
			- k
			\int_\xi^{
				r_0
			}
			b (\zeta)
			\,
			d\zeta
		}
		f (\xi, \beta M (\xi; u))
		\,
		d\xi
	\right)^{1 / (p-1)}.
\end{align*}
Combining this with~\eqref{PL3.5.7} and the relation
\begin{align*}
	r_{j+1}^{
		- (a - p + 2) / (p -1)
	}
	-
	r_j^{
		- (a - p + 2) / (p -1)
	}
	&
	\ge
	\frac{a - p + 2}{p - 1}
	(r_j - r_{j+1})
	r_j^{
		- (a + 1) / (p - 1)
	}
	\\
	&
	\ge
	\frac{
		a - p + 2
	}{
		2^j
		(p - 1)
	}
	(r_0 - r_1)
	r_0^{
		- (a + 1) / (p - 1)
	}
	\\
	&
	=
	\frac{
		a - p + 2
	}{
		2^{j + 1} \sigma^{1/2}
		(p - 1)
	}
	(r - r_0)
	r_0^{
		- (a + 1) / (p - 1)
	}
	\\
	&
	\ge
	2^{ - j - 1} \sigma^{- 1 / 2}
	(
		r_0^{
			- (a - p + 2) / (p -1)
		}
		-
		r^{
			- (a - p + 2) / (p -1)
		}
	),
\end{align*}
one can establish the validity of the estimate
\begin{align*}
	&
	M (r; u) - M (r_0; u)
	\\
	&
	\quad
	\ge
	\frac{
		2^{p / (p - 1)}
		(p - 1)
	}{
		2^{j p / (p - 1) + 2}
		\beta^{j/2}
		\sigma^{1/2}
		(a - p +2)
	}
	(
		r_0^{
			- (a - p + 2) / (p -1)
		}
		-
		r^{
			- (a - p + 2) / (p -1)
		}
	)
	\\
	&
	\quad
	\phantom{{}\ge}
	\times
	\left(
		\alpha
		\int_{
			R_0
		}^{
			r_1
		}
		\xi^{1+a}
		e^{
			- k
			\int_\xi^{
				r_0
			}
			b (\zeta)
			\,
			d\zeta
		}
		f (\xi, \beta M (\xi; u))
		\,
		d\xi
	\right)^{1 / (p-1)}
\end{align*}
from which~\eqref{PL3.5.3} follows again.

Now, let proposition~(3) be valid.
It is obvious that
$$
	r_1 - r_{j+1}
	=
	\sum_{i = 1}^j
	(r_i - r_{i+1})
	\le
	\sum_{i = 1}^j
	2^{-i}
	(r_0 - r_1)
	\le
	r_0 - r_1.
$$
In particular, $\sigma^{1/2} r_{j+1} \ge r_0$.
Consequently, Corollary~\ref{C3.2} implies the inequality
\begin{align*}
	&
	M (r; u) - M (r_0 + 0; u)
	\\
	&
	\quad
	\ge
	\gamma_3
	(r - r_0)
	\left(
		\frac{
			r_0 - r_j
		}{
			r_j - r_{j+1}
		}
		\int_{
			r_{j+1}
		}^{
			r_j
		}
		e^{
			- k
			\int_\xi^{r_0}
			b (\zeta)
			\,
			d\zeta
		}
		f (\xi, \beta M (\xi; u))
		\,
		d\xi
	\right)^{1 / (p - 1)},
\end{align*}
whence in accordance with~\eqref{PL3.5.2}, \eqref{PL3.5.8} and the fact that
$r_j - r_{j+1} \le 2^{-j} (r_0 - r_1) \le 2^{-j} (r_0 - r_j)$ 
we obtain
\begin{align*}
	&
	M (r; u) - M (r_0 + 0; u)
	\\
	&
	\quad
	\ge
	\frac{
		\gamma_3
		(p - 1)
	}{
		a - p +2
	}
	(
		r_0^{
			- (a - p + 2) / (p -1)
		}
		-
		r^{
			- (a - p + 2) / (p -1)
		}
	)
	\\
	&
	\quad
	\phantom{{}\ge}
	\times
	\left(
		\int_{
			R_0
		}^{
			r_1
		}
		\xi^{1+a}
		e^{
			- k
			\int_\xi^{r_0}
			b (\zeta)
			\,
			d\zeta
		}
		f (\xi, \beta M (\xi; u))
		\,
		d\xi
	\right)^{1 / (p-1)}.
\end{align*}
This obviously implies~\eqref{PL3.5.3}.

Finally, let proposition~(4) be valid.
If $r_{j+1} = R_0$, then the right-hand side of~\eqref{PL3.5.3} is equal to zero;
therefore, estimate~\eqref{PL3.5.3} is trivial.
Thus, one can assume that $r_{j+1} > R_0$. In this case, we have
$M (r_{j+1}; u) \le \beta^{1/2} M (r_j; u) \le M (r_{j+1} + 0; u)$
and Lemma~\ref{L3.4} allows us to assert that
\begin{align*}
	&
	M (r_j; u) - M (r_{j+1}; u)
	\\
	&
	\quad
	\ge
	2^{- p / (p - 1)}
	(1 - \sigma^{- 1/2})
	\beta^{- 1/2}
	r_{j+1}^{
		- (a - p + 2) / (p -1)
	}
	\\
	&
	\quad
	\phantom{{}\ge}
	\times
	\left(
		\alpha
		\int_{
			R_0
		}^{
			r_{j+1}
		}
		\xi^{1+a}
		e^{
			- k
			\int_\xi^{
				r_{j+1}
			}
			b (\zeta)
			\,
			d\zeta
		}
		f (\xi, \beta M (\xi; u))
		\,
		d\xi
	\right)^{1 / (p-1)}.
\end{align*}
Combining the last relation with~\eqref{PL3.5.7} and~\eqref{PL3.5.11}, 
we obtain
\begin{align*}
	&
	M (r; u) - M (r_0; u)
	\\
	&
	\quad
	\ge
	2^{- (p + j) / (p - 1) - 1}
	(1 - \sigma^{- 1/2})
	\beta^{- (j + 1) / 2}
	r_{j+1}^{
		- (a - p + 2) / (p -1)
	}
	\\
	&
	\quad
	\phantom{{}\ge}
	\times
	\left(
		\alpha
		\int_{
			R_0
		}^{
			r_1
		}
		\xi^{1+a}
		e^{
			- k
			\int_\xi^{
				r_0
			}
			b (\zeta)
			\,
			d\zeta
		}
		f (\xi, \beta M (\xi; u))
		\,
		d\xi
	\right)^{1 / (p-1)},
\end{align*}
whence~\eqref{PL3.5.3} follows at once. The proof is completed.
\end{proof}

\begin{proof}[Proof of Theorem~$\ref{T2.2}$]
The proof is by induction over the minimal positive integer $N$
such that $M (R_0 + 0; u) \ge \beta^{N / 2} M (r; u)$.
If $N = 1$, then~\eqref{T2.2.1} follows from Lemma~\ref{L3.2}.
Assume that Theorem~\ref{T2.2} is already proved
for all $N \le N_0$, where $N_0$ is some positive integer.
Let us prove it for $N = N_0 + 1$.
Put
$$
	r_0
	=
	\inf
	\{
		\xi \in (R_0, r)
		:
		M (\xi; u) > \beta^{1 / 2} M (r; u)
	\}.
$$
We have $R_0 < r_0 < r$ and, moreover,
${M (r_0; u)} \le \beta^{1/2} {M (r; u)} \le {M (r_0 + 0; u)}$.

By the induction hypothesis,
\begin{align}
	&
	M (r_0; u) - M (R_0+0; u)
	\nonumber
	\\
	&
	\quad
	\ge
	\int_{R_0}^{r_0}
	dt
	\,
	\left(
		\frac{\alpha}{t^{1+a}}
		\int_{R_0}^t
		\xi^{1+a}
		e^{
			- k
			\int_\xi^t
			b (\zeta)
			\,
			d\zeta
		}
		f (\xi, \beta M (\xi;  u))
		\,
		d\xi
	\right)^{1 / (p-1)}.
	\label{PT2.2.1}
\end{align}
At the same time, it can be shown that
\begin{align}
	&
	M (r; u) - M (r_0; u)
	\nonumber
	\\
	&
	\quad
	\ge
	\int_{r_0}^r
	dt
	\,
	\left(
		\frac{\alpha}{t^{1+a}}
		\int_{R_0}^t
		\xi^{1+a}
		e^{
			- k
			\int_\xi^t
			b (\zeta)
			\,
			d\zeta
		}
		f (\xi, \beta M (\xi;  u))
		\,
		d\xi
	\right)^{1 / (p-1)}.
	\label{PT2.2.2}
\end{align}
Really, the right-hand side of the last expression satisfies the inequality
\begin{align*}
	&
	\int_{r_0}^r
	dt
	\,
	\left(
		\frac{\alpha}{t^{1+a}}
		\int_{R_0}^t
		\xi^{1+a}
		e^{
			- k
			\int_\xi^t
			b (\zeta)
			\,
			d\zeta
		}
		f (\xi, \beta M (\xi; u))
		\,
		d\xi
	\right)^{1 / (p-1)}
	\\
	&
	\quad
	\le
	2^{1 / (p - 1)}
	\int_{r_0}^r
	dt
	\,
	\left(
		\frac{\alpha}{t^{1+a}}
		\int_{r_0}^t
		\xi^{1+a}
		e^{
			- k
			\int_\xi^t
			b (\zeta)
			\,
			d\zeta
		}
		f (\xi, \beta M (\xi; u))
		\,
		d\xi
	\right)^{1 / (p-1)}
	\\
	&
	\quad
	\phantom{{}\le}
	+
	2^{1 / (p - 1)}
	\int_{r_0}^r
	dt
	\,
	\left(
		\frac{\alpha}{t^{1+a}}
		\int_{R_0}^{r_0}
		\xi^{1+a}
		e^{
			- k
			\int_\xi^t
			b (\zeta)
			\,
			d\zeta
		}
		f (\xi, \beta M (\xi; u))
		\,
		d\xi
	\right)^{1 / (p-1)}.
\end{align*}
Thus, formula~\eqref{PT2.2.2} will be proved if we succeed in proving
the estimates
\begin{align}
	&
	M (r; u) - M (r_0 + 0; u)
	\nonumber
	\\
	&
	\quad
	\ge
	2^{p / (p - 1)}
	\int_{r_0}^r
	dt
	\,
	\left(
		\frac{\alpha}{t^{1+a}}
		\int_{r_0}^t
		\xi^{1+a}
		e^{
			- k
			\int_\xi^t
			b (\zeta)
			\,
			d\zeta
		}
		f (\xi, \beta M (\xi; u))
		\,
		d\xi
	\right)^{1 / (p-1)}
	\label{PT2.2.3}
\end{align}
and
\begin{align}
	&
	M (r; u) - M (r_0; u)
	\nonumber
	\\
	&
	\quad
	\ge
	\frac{
		2^{p / (p - 1)}
		(p - 1)
	}{
		a - p +2
	}
	(
		r_0^{
			- (a - p + 2) / (p - 1)
		}
		-
		r^{
			- (a - p + 2) / (p - 1)
		}
	)
	\nonumber
	\\
	&
	\quad
	\phantom{{}=}
	\times
	\left(
		\alpha
		\int_{R_0}^{r_0}
		\xi^{1+a}
		e^{
			- k
			\int_\xi^{r_0}
			b (\zeta)
			\,
			d\zeta
		}
		f (\xi, \beta M (\xi; u))
		\,
		d\xi
	\right)^{1 / (p-1)}
	\nonumber
	\\
	&
	\quad
	\ge
	2^{p / (p - 1)}
	\int_{r_0}^r
	dt
	\,
	\left(
		\frac{\alpha}{t^{1+a}}
		\int_{R_0}^{r_0}
		\xi^{1+a}
		e^{
			- k
			\int_\xi^t
			b (\zeta)
			\,
			d\zeta
		}
		f (\xi, \beta M (\xi; u))
		\,
		d\xi
	\right)^{1 / (p-1)}.
	\label{PT2.2.4}
\end{align}
Estimate~\eqref{PT2.2.3} is a consequence of Lemma~\ref{L3.2}, 
whereas~\eqref{PT2.2.4} can be obtained by Lemma~\ref{L3.4} if
$\sigma^{1/2} r_0 \le r$
or by Lemma~\ref{L3.5} if
$\sigma^{1/2} r_0 > r$.

To compete the proof it remains to sum inequalities~\eqref{PT2.2.1}
and~\eqref{PT2.2.2}.
\end{proof}

\section{Proof of Lemma 3.1}\label{S4}

As in the previous section, we assume that $u$ is a non-negative solution of 
problem~\eqref{1.1}, \eqref{1.3} and, moreover, ${M (\cdot; u )}$ is a 
non-decreasing function on the interval $(R_0, R_1)$ satisfying 
condition~\eqref{T2.1.1}.

\begin{Lemma}\label{L4.1}
There is a symmetric $n \times n$-matrix $\| a_{ij} \|$
with measurable coefficients such that the function
$A = (A_1, \ldots, A_n)$
on the left in~$(\ref{1.1})$ can be written as follows:
$$
	A_i (x, \xi)
	=
	\sum_{j = 1}^n
	a_{ij} (x, \xi)
	|\xi|^{p-2}
	\xi_j,
	\quad
	i = 1, \ldots, n,
$$
for almost all $x \in \Omega_{R_0, R_1}$ and for all $\xi \in {\mathbb R}^n$.
In so doing, we have
\begin{equation}
	\lambda_1
	|\zeta|^2
	\le
	\sum_{i,j = 1}^n
	a_{ij} (x, \xi) \zeta_i \zeta_j
	\le
	\lambda_2
	|\zeta|^2
	\label{L4.1.1}
\end{equation}
for almost all $x \in \Omega_{R_0, R_1}$ and for all $\xi, \zeta \in {\mathbb R}^n$,
where the constants $\lambda_1 > 0$ and $\lambda_2 > 0$ depend only on
$C_1$ and $C_2$.
\end{Lemma}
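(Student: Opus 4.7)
The plan is to give an explicit algebraic construction of $\|a_{ij}\|$ directly from $A$. For $\xi\neq 0$ I would introduce the auxiliary vector $v(x,\xi)=A(x,\xi)/|\xi|^{p-2}$, which by the structural assumptions on $A$ satisfies $v\cdot\xi\ge C_1|\xi|^2$ and $|v|\le C_2|\xi|$. In these variables, the required identity $A_i=\sum_j a_{ij}|\xi|^{p-2}\xi_j$ reads simply $M\xi=v$, where $M=\|a_{ij}\|$. The lemma thus reduces to producing a symmetric matrix $M$, depending measurably on $(x,\xi)$, with $M\xi=v$ and satisfying the ellipticity bound~\eqref{L4.1.1} with constants depending only on $C_1,C_2$.

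As an explicit candidate I would take
$$
a_{ij}(x,\xi)=\mu\left(\delta_{ij}-\frac{\xi_i\xi_j}{|\xi|^2}\right)+\frac{v_i\xi_j+v_j\xi_i}{|\xi|^2}-\frac{(v\cdot\xi)\,\xi_i\xi_j}{|\xi|^4},
$$
where $\mu>0$ is a constant depending only on $C_1,C_2$, to be fixed at the end. Symmetry is manifest from the formula, measurability in $(x,\xi)$ is inherited from $A$, and a direct computation of $\sum_j a_{ij}\xi_j$ yields $0+v_i+(v\cdot\xi)\xi_i/|\xi|^2-(v\cdot\xi)\xi_i/|\xi|^2=v_i$, which is exactly $M\xi=v$.

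The bulk of the work is~\eqref{L4.1.1}. Writing $\zeta=s\xi+\zeta_\perp$ with $s=(\xi\cdot\zeta)/|\xi|^2$ and $\zeta_\perp\cdot\xi=0$, one has $|\zeta|^2=s^2|\xi|^2+|\zeta_\perp|^2$, and after a short computation the quadratic form collapses to
$$
\sum_{i,j}a_{ij}\zeta_i\zeta_j=(v\cdot\xi)\,s^2+2s\,(v\cdot\zeta_\perp)+\mu|\zeta_\perp|^2.
$$
Combining $v\cdot\xi\in[C_1|\xi|^2,C_2|\xi|^2]$, $|v\cdot\zeta_\perp|\le C_2|\xi|\,|\zeta_\perp|$ and the elementary bound $2|s(v\cdot\zeta_\perp)|\le\varepsilon C_2^2 s^2|\xi|^2+\varepsilon^{-1}|\zeta_\perp|^2$, the choice $\varepsilon=C_1/(2C_2^2)$ together with $\mu=C_1/2+2C_2^2/C_1$ yields a two-sided bound with $\lambda_1,\lambda_2$ depending only on $C_1,C_2$. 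The residual case $\xi=0$ is trivial: both sides of the identity $A_i=\sum_j a_{ij}|\xi|^{p-2}\xi_j$ vanish since $|A(x,0)|\le C_2\cdot 0$, so one may simply set $a_{ij}(x,0)=\mu\delta_{ij}$, which preserves~\eqref{L4.1.1} and measurability. The only genuinely nontrivial step—though minor—is picking $\mu$ and $\varepsilon$ consistently so that the lower bound has a strictly positive coefficient on both $s^2|\xi|^2$ and $|\zeta_\perp|^2$; everything else is bookkeeping.
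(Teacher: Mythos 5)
Your construction is correct. Writing $v = A(x,\xi)/|\xi|^{p-2}$ for $\xi\neq 0$, the structural bounds on $A$ give exactly $v\cdot\xi\ge C_1|\xi|^2$ and $|v|\le C_2|\xi|$ (hence also $v\cdot\xi\le C_2|\xi|^2$ by Cauchy--Schwarz), and the problem correctly reduces to finding a symmetric $M$ with $M\xi=v$ and uniform two-sided bounds. Your explicit matrix is symmetric and measurable, the verification $\sum_j a_{ij}\xi_j=v_i$ is right, and in the orthogonal decomposition $\zeta=s\xi+\zeta_\perp$ the quadratic form does collapse to $(v\cdot\xi)s^2+2s(v\cdot\zeta_\perp)+\mu|\zeta_\perp|^2$. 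With $\varepsilon=C_1/(2C_2^2)$ and $\mu=C_1/2+2C_2^2/C_1$ one obtains $\lambda_1=C_1/2$ and a $\lambda_2$ depending only on $C_1,C_2$, and the $\xi=0$ case is dispatched correctly since $|A(x,0)|\le C_2\cdot 0=0$ and $|\xi|^{p-2}\xi_j\to 0$ as $\xi\to 0$ for $p>1$.

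The paper itself does not give a proof here but refers to Lemma~4.1 of the author's earlier article \cite{Me}, so a line-by-line comparison is not possible from what is in front of us. Your argument is the standard one for this kind of factorization lemma: decompose $\mathbb{R}^n$ along $\xi$ and $\xi^\perp$, put the ``tangential'' part $\mu P_{\xi^\perp}$ in by hand, and use the rank-two symmetrization $(v\otimes\xi+\xi\otimes v)/|\xi|^2-(v\cdot\xi)\,\xi\otimes\xi/|\xi|^4$ to enforce $M\xi=v$. It is self-contained and gives explicit constants, which is arguably cleaner than pointing to a reference. No gaps.
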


The proof is given in~\cite[Lemma~4.1]{Me}.

\bigskip

We put
$$
	q (x, \xi)
	=
	\left(
	\sum_{i,j=1}^n
	q_{ij} (x) \xi_i \xi_j
	\right)^{
		(p - 2) / 2
	},
$$
where
$$
	q_{ij} (x)
	=
	\left\{
		\begin{array}{ll}
			a_{ij} (x, D u),
			&
			x
			\in
			\Omega_{R_0, R_1},
			\\
			(\lambda_1 + \lambda_2) \delta_{ij} / 2,
			&
			x
			\in
			{\mathbb R}^n
			\setminus
			\Omega_{R_0, R_1}.
		\end{array}
	\right.
$$
Also let
\begin{equation}
	Q_i (x, \xi)
	=
	h (x)
	q (x, \xi)
	\sum_{j = 1}^n
	q_{ij} (x)
	\xi_j,
	\quad
	i = 1, \ldots, n,
	\label{4.3}
\end{equation}
where
$
	h (x)
	=
	|D u|^{p-2}
	/
	q (x, D u)
$
for all
$x \in \Omega_{R_0, R_1}$
such that
$D u (x) \ne 0$
and
$h (x) =	(\lambda_1^{(2-p)/2} + \lambda_2^{(2-p)/2}) / 2$
for all other $x \in {\mathbb R}^n$.
In the case of $\xi = 0$, we assume that the right-hand side of~\eqref{4.3}
is equal to zero.

Relation~(\ref{L4.1.1}) implies the inequalities
\begin{equation}
	\min
	\{
		\lambda_1^{(2-p)/2},
		\lambda_2^{(2-p)/2}
	\}
	\le
	h (x)
	\le
	\max
	\{
		\lambda_1^{(2-p)/2},
		\lambda_2^{(2-p)/2}
	\}
	\label{4.1h}
\end{equation}
and
\begin{equation}
	\lambda_1
	|\xi|^2
	\le
	\sum_{i,j = 1}^n
	q_{ij} (x) \xi_i \xi_j
	\le
	\lambda_2
	|\xi|^2
	\label{4.1q}
\end{equation}
for almost all $x \in {\mathbb R}^n$ and for all $\xi \in {\mathbb R}^n$.

From Lemma~\ref{L4.1}, it follows that
\begin{equation}
	\diver Q (x, D u)
	\ge
	F (x, u, D u)
	\quad
	\mbox{in }
	\Omega_{R_0, R_1},
	\label{4.2}
\end{equation}
where $Q = (Q_1, \ldots, Q_n)$. In addition, we obtain
$$
	(
		Q (x, \xi)
		-
		Q (x, \zeta)
	)
	(\xi - \zeta)
	>
	0
$$
for almost all $x \in {\mathbb R}^n$ and for all 
$\xi, \zeta \in {\mathbb R}^n$,
$\xi \ne \zeta$.

Let $\omega_1$ and $\omega_2$ be open subsets of ${\mathbb R}^n$ and
$v \in W_{p, loc}^1 (\omega_1 \cap \omega_2)$.
We say that
$$
    \left.
    v
    \right|_{
        \omega_2 \cap \partial \omega_1
    }
    =
    0
$$
if
$
    \varphi
     v
    \in
    {
    	\stackrel{\rm \scriptscriptstyle o}{W}\!\!{}_p^1 
    	(
    		\omega_1 \cap \omega_2
    	)
    }
$
for any $\varphi \in C_0^\infty (\omega_2)$.
Analogously,
$$
    \left.
    v
    \right|_{
        \omega_2 \cap \partial \omega_1
    }
    \le
    0
$$
if
$
    \varphi
    \max \{ v, 0 \}
    \in
    {
    	\stackrel{\rm \scriptscriptstyle o}{W}\!\!{}_p^1 
    	(
    		\omega_1 \cap \omega_2
    	)
    }
$
for any $\varphi \in C_0^\infty (\omega_2)$.

\begin{Lemma}\label{L4.2}
Suppose that
$v \in W_p^1 (\omega_1 \cap \omega_2)$
is a solution of the problem
$$
	\diver Q (x, D v)
	\ge
	g (x)
	\quad
	\mbox{in } \omega_1 \cap \omega_2,
	\quad
	\left.
	v
	\right|_{
		\omega_2 \cap \partial \omega_1
	}
	\le
	0,
$$
where $\omega_1$ and $\omega_2$ are bounded open subsets of ${\mathbb R}^n$
and $g \in L_{p / (p - 1)} (\omega_1 \cap \omega_2)$ is some function.
We denote:
$\omega_0 = \{ x \in \omega_1 \cap \omega_2 : v (x) > 0 \}$,
$$
	v_0 (x)
	=
	\left\{
		\begin{array}{ll}
			v (x),
			&
			x \in \omega_0,
			\\
			0,
			&
			x \in \omega_2 \setminus \omega_0
		\end{array}
	\right.
$$
and
$$
	g_0 (x)
	=
	\left\{
		\begin{array}{ll}
			g (x),
			&
			x \in \omega_0,
			\\
			0,
			&
			x \in \omega_2 \setminus \omega_0.
		\end{array}
	\right.
$$
Then
$$
	\diver Q (x, D v_0)
	\ge
	g_0 (x)
	\quad
	\mbox{in } \omega_2.
$$
\end{Lemma}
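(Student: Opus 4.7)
The plan is to establish the weak inequality $\diver Q(x, Dv_0) \ge g_0$ on $\omega_2$ by testing the weak formulation of the inequality for $v$ on $\omega_1 \cap \omega_2$ against $\psi_\epsilon := \varphi\,\eta_\epsilon(v)$, where $\varphi \in C_0^\infty(\omega_2)$ is non-negative and $\eta_\epsilon\colon \mathbb{R} \to [0, 1]$ is a smooth non-decreasing cutoff with $\eta_\epsilon \equiv 0$ on $(-\infty, 0]$ and $\eta_\epsilon \equiv 1$ on $[\epsilon, \infty)$, and then passing to the limit $\epsilon \to 0^+$. The factor $\eta_\epsilon(v)$ annihilates contributions from $\{v \le 0\}$, so in the limit only $\omega_0 = \{v > 0\}$ survives.

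Extending the weak inequality for $v$ by density from $C_0^\infty(\omega_1 \cap \omega_2)$ to non-negative elements of ${\stackrel{\rm \scriptscriptstyle o}{W}\!\!{}_p^1}(\omega_1 \cap \omega_2)$, substituting $\psi_\epsilon$ and expanding the derivative yield
\begin{align*}
& -\int_{\omega_1 \cap \omega_2} \eta_\epsilon(v)\, Q(x, Dv) \cdot D\varphi \, dx - \int_{\omega_1 \cap \omega_2} \varphi\, \eta_\epsilon'(v)\, Q(x, Dv) \cdot Dv \, dx \\
& \qquad \ge \int_{\omega_1 \cap \omega_2} g\, \varphi\, \eta_\epsilon(v) \, dx.
\end{align*}
The structural formula~\eqref{4.3} together with~\eqref{4.1q} and positivity of $h$ gives $Q(x, Dv) \cdot Dv \ge 0$ a.e., so since $\varphi, \eta_\epsilon' \ge 0$ the second integral on the left is non-negative and may be discarded. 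Sending $\epsilon \to 0^+$, dominated convergence (with dominants $|Q(x, Dv) \cdot D\varphi| \in L_1$ and $|g\varphi| \in L_1$) combined with $\eta_\epsilon(v) \to \chi_{\omega_0}$ pointwise produces
$$
-\int_{\omega_0} Q(x, Dv) \cdot D\varphi \, dx \ge \int_{\omega_0} g\, \varphi \, dx.
$$
On $\omega_0$ one has $Dv_0 = Dv$ and $g_0 = g$, whereas on $\omega_2 \setminus \omega_0$ we have $v_0 = 0$, hence $Dv_0 = 0$, $Q(x, Dv_0) = 0$, and $g_0 = 0$. Consequently the displayed inequality is equivalent to $-\int_{\omega_2} Q(x, Dv_0) \cdot D\varphi\, dx \ge \int_{\omega_2} g_0\, \varphi\, dx$, the required weak divergence inequality on $\omega_2$.

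The principal technical obstacle is the rigorous justification that $\psi_\epsilon \in {\stackrel{\rm \scriptscriptstyle o}{W}\!\!{}_p^1}(\omega_1 \cap \omega_2)$, so that it is legitimate in the extended weak formulation. Intuitively $\eta_\epsilon(v) = \eta_\epsilon(\max\{v, 0\})$ vanishes whenever $v \le 0$, so under the boundary hypothesis $\psi_\epsilon$ has zero trace on $\partial(\omega_1 \cap \omega_2)$. Rigorously, one may introduce an auxiliary cutoff $\chi_n \in C_0^\infty(\omega_1)$ approaching the characteristic function of $\omega_1$, run the argument with the harmless test function $\chi_n \psi_\epsilon \in {\stackrel{\rm \scriptscriptstyle o}{W}\!\!{}_p^1}(\omega_1 \cap \omega_2)$, and then remove $\chi_n$ by invoking $\varphi\max\{v, 0\} \in {\stackrel{\rm \scriptscriptstyle o}{W}\!\!{}_p^1}(\omega_1 \cap \omega_2)$ together with a Lipschitz composition argument. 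A parallel check yields $v_0 \in W_p^1(\omega_2)$ with $Dv_0 = Dv \cdot \chi_{\omega_0}$, which is what permits rewriting the final integral over $\omega_0$ as an integral over $\omega_2$.
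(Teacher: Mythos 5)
The paper does not prove Lemma~\ref{L4.2} itself but refers to~\cite[Lemma~4.2]{Me}, so there is no in-paper argument to compare against. Your truncation proof is correct and is the standard way to establish this kind of cut-off inequality: test the weak formulation against $\varphi\,\eta_\epsilon(v)$, drop the non-negative term $\int\varphi\,\eta_\epsilon'(v)\,Q(x,Dv)\cdot Dv\,dx$ using ellipticity of $Q$ (via~\eqref{4.3} and~\eqref{4.1q}), and pass to the limit $\epsilon\to 0^+$ by dominated convergence. The two points you flag as requiring care --- membership of $\varphi\,\eta_\epsilon(v)$ in ${\stackrel{\rm \scriptscriptstyle o}{W}\!\!{}_p^1}(\omega_1\cap\omega_2)$, and the fact that $v_0\in W_p^1(\omega_2)$ with $Dv_0=\chi_{\omega_0}Dv$ --- are exactly the ones that need to be handled; both follow from the hypothesis $\varphi\max\{v,0\}\in{\stackrel{\rm \scriptscriptstyle o}{W}\!\!{}_p^1}(\omega_1\cap\omega_2)$ together with the remark that $\eta_\epsilon$ vanishes on $(-\infty,0]$ so $\eta_\epsilon(v)=\eta_\epsilon(\max\{v,0\})$, and the sketches you give for these are adequate. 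No gaps.
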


\begin{Lemma}\label{L4.3}
For every non-negative function
$
	w
	\in
	W_p^1 (B_r^y)
	\cap
	L_\infty (B_r^y),
$
$r > 0$, $y \in {\mathbb R}^n$,
there exists a function
$
	\psi
	\in
	{
		\stackrel{\rm \scriptscriptstyle o}{W}\!\!{}_p^1 
		(B_r^y)
	}
	\cap
	L_\infty (B_r^y)
$
such that
$0 \le \psi \le 1$ almost everywhere on $B_r^y$,
$\psi = 1$ almost everywhere on $B_{r / 2}^y$ and, moreover,
$$
	\esssup_{B_r^y} w^{p-1}
	\ge
	- C
	r^{p - n}
	\int_{B_r^y}
	Q (x, D w)
	D \psi
	\,
	dx,
$$
where the constant $C > 0$ depends only on $n$, $p$, $C_1$, and $C_2$.
\end{Lemma}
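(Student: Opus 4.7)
Set $M := \esssup_{B_r^y} w$. If $M = 0$ then $Q(x, Dw) \equiv 0$ and the inequality is trivial for any admissible $\psi$, so assume $M > 0$. The plan starts from the key structural observation that $Q(x, \cdot)$ is the $\xi$-gradient of the convex, $p$-homogeneous potential $F(x, \xi) := (h(x)/p)\bigl(\sum_{i,j} q_{ij}(x)\xi_i \xi_j\bigr)^{p/2}$, which satisfies $c_1|\xi|^p \le F(x, \xi) \le c_2|\xi|^p$ by~\eqref{4.1h}--\eqref{4.1q}. The goal is to combine this with the pointwise bound $|Q(x, Dw)| \le C_2 |Dw|^{p-1}$ from Lemma~\ref{L4.1}.

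A first attempt is $\psi = \eta^p$ for a smooth cutoff $\eta \in C_0^\infty(B_r^y)$ with $\eta \equiv 1$ on $B_{r/2}^y$ and $|D\eta| \le C/r$. H\"older's inequality then gives
$$\left|\int_{B_r^y} Q(x, Dw) D\psi\,dx\right| \le C\left(\int_{B_r^y} \eta^p |Dw|^p\,dx\right)^{(p-1)/p}\left(\int_{B_r^y} |D\eta|^p\,dx\right)^{1/p},$$
and since the second factor is at most $Cr^{(n-p)/p}$, the desired conclusion reduces to the Caccioppoli-type bound $\int \eta^p |Dw|^p \le C M^p r^{n-p}$. Such a bound holds by testing with $w\eta^p$ in the weak formulation when $w$ is a subsolution of $\diver Q(x, Dw) \ge 0$, but it fails for an arbitrary non-negative $w \in W_p^1 \cap L_\infty$: highly oscillatory profiles with $\|w\|_\infty \le M$ can have $\int |Dw|^p$ arbitrarily large. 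Consequently, $\psi$ must be chosen depending on $w$.

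I would therefore construct $\psi$ as the minimizer of the obstacle problem: minimize $\int_{B_r^y} F(x, D\varphi)\,dx$ over $\varphi \in {\stackrel{\rm \scriptscriptstyle o}{W}\!\!{}_p^1}(B_r^y)$ with $0 \le \varphi \le 1$ and $\varphi \equiv 1$ on $B_{r/2}^y$; existence follows from the direct method, and the constraints force the required boundary/cutoff conditions on $\psi$. The pointwise convexity inequality $Q(x, Dw) \cdot \bigl(D(w - M\psi) - Dw\bigr) \le F(x, D(w - M\psi)) - F(x, Dw)$, integrated and combined with the minimizing property of $\psi$ (comparing with the competitor $\varphi = \eta$), yields an upper bound on $-M \int Q(x, Dw) D\psi\,dx$ in terms of $\int F(x, D(w - M\eta))\,dx$. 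The main obstacle will be to extract from this the correct power $CM^{p-1}r^{n-p}$ rather than the dimensionally natural but power-wrong $CM^p r^{n-p}$ that a naive estimate produces. Making this absorption work requires a delicate Young-inequality splitting that uses the pointwise bound $0 \le w \le M$ together with the strict convexity of $F$ to trade one factor of $|Dw|^p$ for one factor of $M$; this refinement of the naive Caccioppoli bound is the technical heart of the argument.
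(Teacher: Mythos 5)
The paper does not reproduce a proof of Lemma~\ref{L4.3}; it cites~\cite[Lemma~4.4]{Me}, where the test function $\psi$ is built \emph{explicitly} from $w$ by a truncation of a fixed cutoff shifted by $w/M$. Your diagnosis of the obstruction is exactly right: a fixed (that is, $w$-independent) cutoff cannot work because $\int|Dw|^p$ is uncontrolled. But your proposed remedy does not carry out the fix, for two concrete reasons.

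First, the obstacle problem you write down --- minimize $\int_{B_r^y}F(x,D\varphi)\,dx$ over $\varphi$ with $0\le\varphi\le 1$, $\varphi\equiv 1$ on $B_{r/2}^y$, zero boundary values --- has neither a functional nor a constraint set that depends on $w$. Its minimizer $\psi$ is therefore again a fixed cutoff, which contradicts your own first paragraph. Even if you intended instead to minimize $\int F(x,Dw-M\,D\varphi)\,dx$ over the same admissible set, the comparison with $\varphi=\eta$ only gives $\int F(x,Dw-M\,D\psi)\le\int F(x,Dw-M\,D\eta)$, and the resulting bound $-M\int Q(x,Dw)D\psi\le\int F(x,Dw-M\,D\eta)-\int F(x,Dw)$ still contains terms of order $\int|Dw|^{p-1}|D\eta|$ with no mechanism to absorb them. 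Second, you defer the ``delicate Young-inequality splitting that trades one factor of $|Dw|^p$ for one factor of $M$'' to an unexplained final step, but this is precisely the substance of the lemma; announcing it does not supply it.

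The missing idea is that $D\psi$ should contain a summand $+Dw/M$, because $Q(x,Dw)\cdot Dw\ge C_1|Dw|^p\ge 0$ and so the contribution $-\frac{1}{M}Q(x,Dw)\cdot Dw$ to $-Q(x,Dw)\cdot D\psi$ has the \emph{favorable sign} and can absorb the unbounded part. Concretely, set $M=\esssup_{B_r^y}w$ (if $M=0$ the claim is trivial), pick a radial Lipschitz function $\eta_0$ with $\eta_0=1$ on $B_{r/2}^y$, $\eta_0=-1$ on $B_r^y\setminus B_{3r/4}^y$, $|D\eta_0|\le 8/r$, and define
\begin{equation*}
\psi=\max\bigl\{0,\min\{1,\eta_0+w/M\}\bigr\}.
\end{equation*}
Then $0\le\psi\le 1$, $\psi=1$ on $B_{r/2}^y$ (since $\eta_0=1$ and $w\ge 0$), and $\psi=0$ on $B_r^y\setminus B_{3r/4}^y$ (since $\eta_0+w/M\le -1+1=0$ there), so $\psi$ has the required boundary and cutoff properties. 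On $E=\{0<\eta_0+w/M<1\}$ one has $D\psi=D\eta_0+Dw/M$, and $D\psi=0$ elsewhere, so
\begin{equation*}
-\int_{B_r^y}Q(x,Dw)D\psi\,dx
=-\int_{E}Q(x,Dw)\cdot D\eta_0\,dx-\frac{1}{M}\int_{E}Q(x,Dw)\cdot Dw\,dx.
\end{equation*}
Using $|Q(x,Dw)|\le C_2|Dw|^{p-1}$, $|D\eta_0|\le 8/r$, and $Q(x,Dw)\cdot Dw\ge C_1|Dw|^p$, the right-hand side is at most
\begin{equation*}
\int_{E}\Bigl(\frac{8C_2}{r}|Dw|^{p-1}-\frac{C_1}{M}|Dw|^{p}\Bigr)\,dx
\le C(n,p,C_1,C_2)\,\frac{M^{p-1}}{r^{p}}\,|E|
\le C'\,M^{p-1}r^{n-p},
\end{equation*}
where the middle inequality is Young's inequality $ab^{p-1}\le\varepsilon b^p+c_p\varepsilon^{1-p}a^p$ with $\varepsilon=C_1/M$, and $|E|\le\mes B_r^y$. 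This is exactly the asserted bound, with $C=1/C'$. Your write-up, by contrast, never produces a $w$-dependent $\psi$ whose gradient contains $+Dw/M$, and that is the gap.
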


The proof of Lemmas~\ref{L4.2} and~\ref{L4.3} is given in~\cite[Lemmas~4.2 and~4.4]{Me}.

\begin{Proposition}[maximum principle]\label{P4.1}
Suppose that
$v \in W_p^1 (\omega) \cap L_\infty (\omega)$,
where $\omega$ is an open bounded subset of ${\mathbb R}^n$ 
with an infinitely smooth boundary and, moreover,
\begin{equation}
	\diver Q (x, D v) 
	+ 
	H (x) 
	|D v|^{p - 1}
	\ge
	0
	\label{P4.1.1}
\end{equation}
in $\omega$ for some funtion $H \in L_\infty (\omega)$. Then
\begin{equation}
	\esssup
	\left.
		v
	\right|_{
		\partial \omega
	}
	=
	\esssup_\omega
	v,
	\label{P4.1.2}
\end{equation}
where the restriction of $v$ to $\partial \omega$
is understood in the sense of the trace and
the $\esssup$ in the left-hand side of~\eqref{P4.1.2} is with respect to
$(n-1)$-dimensional Lebesgue measure on $\partial \omega$.
\end{Proposition}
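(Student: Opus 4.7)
The plan is to argue by contradiction via a Stampacchia-style truncation. Write $M := \esssup v|_{\partial \omega}$ and $M^* := \esssup_\omega v$; we must show $M = M^*$, so suppose for contradiction that $M < M^*$. For each $k \in (M, M^*)$ set $\phi_k := \max\{v - k, 0\}$. Since $v$ is bounded and $v|_{\partial \omega} \le M < k$ in the trace sense, $\phi_k$ is a non-negative, bounded, $W_p^1$-function with zero trace on $\partial \omega$, i.e.\ $\phi_k \in \stackrel{\rm \scriptscriptstyle o}{W}\!\!{}_p^1 (\omega)$. By density it is admissible as a test function in the weak form of~(\ref{P4.1.1}), yielding
$$\int_\omega Q(x, Dv) \cdot D\phi_k \, dx \le \int_\omega H(x) |Dv|^{p-1} \phi_k \, dx.$$

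The first ingredient I would establish is a coercivity/growth pair for $Q$: from the definition~(\ref{4.3}) combined with (\ref{4.1h}), (\ref{4.1q}), a direct computation gives
$$Q(x, \xi) \cdot \xi = h(x) \Bigl( \sum_{i,j=1}^n q_{ij}(x) \xi_i \xi_j \Bigr)^{p/2} \ge c_1 |\xi|^p, \qquad |Q(x, \xi)| \le c_2 |\xi|^{p-1},$$
with $c_1, c_2 > 0$ depending only on $C_1, C_2$. Setting $A_k := \{x \in \omega : v(x) > k\}$ and observing $D\phi_k = Dv \cdot \chi_{A_k}$, the test-function inequality becomes
$$c_1 \int_{A_k} |Dv|^p \, dx \le c_2 \|H\|_{L^\infty(\omega)} \int_{A_k} |Dv|^{p-1} \phi_k \, dx.$$
The second ingredient is a Sobolev--Poincar\'e estimate for $\phi_k$: in all three regimes ($p < n$, $p = n$, $p > n$) one obtains $\|\phi_k\|_{L^p(\omega)} \le C_0 |A_k|^\alpha \|D\phi_k\|_{L^p(\omega)}$ for some $\alpha > 0$ depending only on $n, p$, by using the embedding $\stackrel{\rm \scriptscriptstyle o}{W}\!\!{}_p^1 \hookrightarrow L^{p^*}$ when $p < n$, into $L^q$ with $q > p$ when $p = n$, and the $L^\infty$ bound combined with H\"older on $A_k$ when $p > n$. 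Applying H\"older on the right of the previous inequality together with this estimate gives
$$c_1 \int_{A_k} |Dv|^p \, dx \le c_2 C_0 \|H\|_{L^\infty(\omega)} |A_k|^\alpha \int_{A_k} |Dv|^p \, dx.$$

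Because $v$ is bounded, $|A_k| \to 0$ as $k \nearrow M^*$, so for $k$ sufficiently close to $M^*$ the coefficient on the right drops below $c_1$, forcing $\int_{A_k} |Dv|^p = 0$. Then $D\phi_k \equiv 0$ almost everywhere on $\omega$; since $\phi_k \in \stackrel{\rm \scriptscriptstyle o}{W}\!\!{}_p^1 (\omega)$ and $\omega$ has finitely many smooth components, each with non-empty boundary, the Poincar\'e inequality forces $\phi_k \equiv 0$, i.e.\ $v \le k < M^*$ a.e., contradicting the definition of $M^*$. The main technical point is a unified Sobolev--Poincar\'e estimate across the three regimes of $p$ versus $n$; this is painless here because $\phi_k = (v - k)_+$ is automatically bounded, so the delicate case $p \ge n$ can be handled by H\"older on the small set $A_k$ rather than by a sharper embedding.
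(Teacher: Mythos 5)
The paper gives no proof of Proposition~\ref{P4.1}: it is stated as standard and the reader is referred to~\cite{LU} and~\cite{Serrin}, so there is nothing in the text to compare against. Your Stampacchia-style truncation is indeed the classical route, and most of it is carried out correctly: the admissibility of $\phi_k=(v-k)_+$ as a non-negative test function in $\stackrel{\rm \scriptscriptstyle o}{W}\!\!{}_p^1(\omega)$ once $k>M$, the coercivity and growth bounds for $Q$ from~\eqref{4.1h} and~\eqref{4.1q}, and the reduction via H\"older and Sobolev--Poincar\'e to
$$
c_1\int_{A_k}|Dv|^p\,dx\le C\,\|H\|_{L^\infty(\omega)}\,|A_k|^{\alpha}\int_{A_k}|Dv|^p\,dx
$$
are all fine.

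The gap is in the concluding step. You assert that ``because $v$ is bounded, $|A_k|\to0$ as $k\nearrow M^{*}$''; boundedness of $v$ is irrelevant here, and the assertion is false in general. As $k\nearrow M^{*}$ the sets $A_k=\{v>k\}$ decrease to $\{v\ge M^{*}\}$, and this limit set may have positive measure (nothing in the hypotheses prevents $v$ from equalling its essential supremum on a set of positive measure). If it does, and $\|H\|_\infty$ is large, the factor $C\|H\|_\infty|A_k|^\alpha$ never drops below $c_1$ and no contradiction is reached. The standard repair is to use the fact that $Dv=0$ a.e.\ on the level set $\{v=M^{*}\}$ (a classical property of Sobolev functions) and to run the H\"older and Sobolev--Poincar\'e steps over the smaller set
$$
\tilde A_k=\{x\in\omega:\ v(x)>k,\ Dv(x)\ne0\}
$$
rather than over $A_k$. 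Since the integrand $|Dv|^{p-1}\phi_k$ vanishes off $\tilde A_k$, one has $\int_{A_k}|Dv|^{p-1}\phi_k\,dx=\int_{\tilde A_k}|Dv|^{p-1}\phi_k\,dx$, and estimating $\|\phi_k\|_{L^p(\tilde A_k)}$ by H\"older and the Sobolev embedding yields the same inequality with $|\tilde A_k|^\alpha$ in place of $|A_k|^\alpha$. Now $\tilde A_k$ decreases to $\{v\ge M^{*}\}\cap\{Dv\ne0\}$, which has measure zero, so $|\tilde A_k|\to0$ and your contradiction argument goes through. With this correction the proof is complete.
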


\begin{Proposition}\label{P4.2}
Let $v \in W_p^1 ((0, l)^n)$, $l > 0$. If
$
	\mes
	\{
		x \in (0, l)^n
		:
		v (x) = 0
	\}
	\ge
	l^n / 2,
$
then
$$
	\int_{(0, l)^n}
	|v|^p
	\,
	dx
	\le
	C
	l^p
	\int_{(0, l)^n}
	|D v|^p
	\,
	dx,
$$
where the constant $C > 0$ depends only on $n$ and $p$.
\end{Proposition}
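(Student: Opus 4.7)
The plan is to reduce to the unit cube via scaling and then derive the statement from the classical mean-value Poincar\'e inequality, with the vanishing hypothesis entering only through an estimate on the mean.

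First, the substitution $y = x / l$ shows that it suffices to treat $l = 1$: the map $v \mapsto v_l$, $v_l(y) = v(l y)$, sends $W_p^1((0,l)^n)$ bijectively onto $W_p^1((0,1)^n)$, preserves the assumption $\mes\{v_l = 0\} \ge 1/2$, and scales the two sides of the target inequality by the same factor $l^n \cdot l^{-p}$ relative to each other. So I reduce to proving
$$
	\int_{(0,1)^n} |v|^p \, dx
	\le
	C
	\int_{(0,1)^n} |Dv|^p \, dx
$$
whenever $v \in W_p^1((0,1)^n)$ and $\mes\{v = 0\} \ge 1/2$.

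Second, I would invoke the standard mean-value Poincar\'e inequality on the unit cube,
$$
	\int_{(0,1)^n} |v - \bar v|^p \, dx
	\le
	C_1
	\int_{(0,1)^n} |Dv|^p \, dx,
	\qquad
	\bar v := \int_{(0,1)^n} v \, dx,
$$
with $C_1$ depending only on $n$ and $p$. This is classical and follows, e.g., by integration along coordinate segments and H\"older's inequality.

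Third, I would use the vanishing set to control the constant $\bar v$. Set $E = \{x \in (0,1)^n : v(x) = 0\}$. On $E$ one has $|v - \bar v| = |\bar v|$, whence
$$
	\frac{1}{2} |\bar v|^p
	\le
	(\mes E) |\bar v|^p
	\le
	\int_E |v - \bar v|^p \, dx
	\le
	C_1 \int_{(0,1)^n} |Dv|^p \, dx,
$$
so $|\bar v|^p \le 2 C_1 \int_{(0,1)^n} |Dv|^p \, dx$. The elementary inequality $|v|^p \le 2^{p-1} |v - \bar v|^p + 2^{p-1} |\bar v|^p$ combined with the previous two bounds then yields the required estimate with $C = 2^{p-1}(C_1 + 2 C_1) = 3 \cdot 2^{p-1} C_1$. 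Rescaling restores the factor $l^p$. The whole argument is a one-step reduction, and I do not foresee any genuine obstacle beyond quoting the standard cube Poincar\'e inequality.
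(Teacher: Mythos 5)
Your proof is correct. The paper itself omits the proof of Propositions 4.1--4.3, stating only that they are ``pretty standard'' with references to Ladyzhenskaya--Ural'tseva and Serrin, so there is no argument in the paper to compare against; your scaling reduction to the unit cube, followed by the mean-value Poincar\'e inequality and the use of the vanishing set to bound the mean $\bar v$, is exactly the standard route and every step checks out, including the constant tracking.
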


\begin{Proposition}[Moser's inequality]\label{P4.3}
Assume that $v \in W_p^1 (B_r^y) \cap L_\infty (B_r^y)$ 
is a non-negative solution of inequality~\eqref{P4.1.1}
in the ball $B_r^y$, $r > 0$, $y \in {\mathbb R}^n$,
where $H \in L_\infty (B_r^y)$ satisfies the condition
\begin{equation}
	r
	\esssup_{
		B_r^y
	}
	|H|
	\le 
	1.
	\label{P4.3.1}
\end{equation}
Then
$$
	\esssup_{
		B_{r/2}^y
	}
	v^p
	\le
	C
	r^{-n}
	\int_{
		B_r^y
	}
	v^p
	\,
	dx,
$$
where the constant $C > 0$ depends only on $n$, $p$, $C_1$, and $C_2$.
\end{Proposition}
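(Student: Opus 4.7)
The plan is to carry out a standard Moser iteration. The starting point is the weak form of inequality~\eqref{P4.1.1}. For a parameter $q \ge p$ and a cutoff $\eta \in C_0^\infty(B_r^y)$ with $0 \le \eta \le 1$, I take as test function $\varphi = \eta^p v^{q - p + 1}$, which is admissible thanks to $v \in L_\infty$. Using the lower ellipticity bound from~\eqref{4.1q} together with~\eqref{4.1h} and the definition~\eqref{4.3}, the principal term $Q(x, Dv) \cdot D\varphi$ produces a good term comparable to $(q - p + 1) \eta^p v^{q - p} |Dv|^p$, while the cross term with $D\eta$ and the lower-order term $H |Dv|^{p - 1}$ are absorbed by Young's inequality. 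This yields a Caccioppoli-type estimate
\begin{equation*}
\int_{B_r^y} \eta^p v^{q - p} |Dv|^p \, dx \le \gamma(q) \int_{B_r^y} \bigl( |D\eta|^p + \eta^p |H|^p \bigr) v^q \, dx,
\end{equation*}
with $\gamma(q)$ growing only polynomially in $q$.

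Rewriting in terms of $w = v^{q/p}$, so that $|Dw|^p$ is comparable to $(q/p)^p v^{q - p} |Dv|^p$, and feeding the result into the Sobolev inequality applied to $\eta w$ (with exponent $p^* = np/(n - p)$ when $p < n$, and an analogous modification when $p \ge n$), I arrive at
\begin{equation*}
\|\eta w\|_{L^{p^*}(B_r^y)}^p \le C \int_{B_r^y} \bigl( |D\eta|^p + \eta^p |H|^p \bigr) v^q \, dx.
\end{equation*}
Choosing concentric radii $r_k = r/2 + r/2^{k + 1}$ and cutoffs $\eta_k$ with $\eta_k = 1$ on $B_{r_{k+1}}^y$ and $|D\eta_k| \le C\, 2^k / r$, and using the hypothesis~\eqref{P4.3.1} to dominate $\eta_k^p |H|^p$ by $r^{-p} \le (2^k / r)^p$, I obtain the reverse H\"older-type inequality
\begin{equation*}
\|v\|_{L^{\chi q}(B_{r_{k+1}}^y)} \le \bigl( \gamma' q \cdot 2^{k p} r^{-p} \bigr)^{1/q} \|v\|_{L^q(B_{r_k}^y)}, \qquad \chi = p^*/p > 1.
\end{equation*}

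Finally, I iterate with $q_k = p \chi^k$ and multiply the resulting inequalities. Since $\sum \chi^{-k}$ and $\sum k \chi^{-k}$ both converge and $\sum q_k^{-1} \log q_k < \infty$, the infinite product of the multiplicative constants converges to a quantity depending only on $n$, $p$, $C_1$, $C_2$; passing to the limit $k \to \infty$ and keeping track of the $r^{-n}$ factor that emerges from the telescoping of $r^{-p/q_k}$ gives precisely
\begin{equation*}
\esssup_{B_{r/2}^y} v^p \le C r^{-n} \int_{B_r^y} v^p \, dx.
\end{equation*}
The main obstacle is the handling of the lower-order term $H |Dv|^{p - 1}$: one must verify that hypothesis~\eqref{P4.3.1} allows it to be absorbed uniformly in the iteration index $k$ and that at every step the constant has the same scaling shape $(\gamma q)^{1/q} (2^k / r)^{p/q}$ as in the standard Moser scheme; without this uniformity the infinite product would diverge.
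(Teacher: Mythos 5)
The paper does not give a proof of Proposition~\ref{P4.3} at all; it explicitly omits it as ``pretty standard'' and refers the reader to~\cite{LU} and~\cite{Serrin}. Your argument is exactly the Moser iteration that those references carry out for the $p$-Laplace structure, and it is correct: testing with $\eta^p v^{q-p+1}$, absorbing the $H|Dv|^{p-1}$ term via Young's inequality together with the bound $|H|\le r^{-1}$ (so that $\eta_k^p|H|^p$ is dominated by $|D\eta_k|^p\sim 2^{kp}r^{-p}$), passing through Sobolev with $w=v^{q/p}$, and iterating over $q_k=p\chi^k$. The one point you flag yourself --- uniformity of the absorption constant in $k$ --- is indeed harmless, since the good term from the principal part carries the factor $q-p+1\ge 1$, so the Young parameter can be fixed once and for all depending only on $p$, $C_1$, $C_2$; and the $q^p$ growth of the iteration constant contributes $\sum q_k^{-1}\log q_k<\infty$, as in the standard scheme. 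So this is a correct proof taking the same (standard) route the paper defers to by citation.
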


We omit the proof of Propositions~\ref{P4.1}--\ref{P4.3}
as it is pretty standard (see~\cite{LU}, \cite{Serrin}).

\begin{Corollary}\label{C4.1}
For all $r \in (R_0, R_1)$
\begin{equation}
	M (r; u)
	=
	\esssup_{
		\Omega_{R_0, r}
	}
	u.
	\label{C4.1.1}
\end{equation}
\end{Corollary}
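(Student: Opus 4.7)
The plan is to establish the nontrivial inequality $\esssup_{\Omega_{R_0,r}} u \le M(r; u)$ via a maximum principle argument. The reverse bound is a routine trace estimate, since for $u \in W_p^1 \cap L_\infty$ the essential supremum of the trace on $S_r \cap \Omega$ is dominated by the essential supremum of $u$ on a one-sided neighborhood of $S_r$ inside $\Omega_{R_0, r}$.

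First, I combine~\eqref{4.2} with the lower bound~\eqref{1.5} and the non-negativity of $u$ and $f$. On the set $\{u > 0\} \cap \Omega_{R_0, R_1}$ this produces the pointwise inequality
\[
	\diver Q(x, Du)
	\ge
	F(x, u, Du)
	\ge
	-\tilde{b}(x)\,|Du|^{p-1},
\]
where $\tilde b(x) = \inf_{s \in (|x|/\sigma,\sigma|x|) \cap (R_0, R_1)} b(s)$ is locally bounded.

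Next, I fix $c > M(r; u)$, set $w = u - c$ on $\omega_1 = \Omega_{R_0, r}$, take $\omega_2 = B_R$ a ball with $\overline{\omega_1} \subset B_R$, and put $g = F(\cdot, u, Du) \in L_{p/(p-1)}(\omega_1)$. The inequality $w \le 0$ on $\omega_2 \cap \partial \omega_1$ holds in the trace sense: on $\Gamma_{R_0, r}$ since $u = 0$ there by~\eqref{1.3}; on $S_r \cap \overline{\Omega}$ by the very definition of $M(r; u)$; and on $S_{R_0} \cap \overline{\Omega}$ when $R_0 > 0$ by monotonicity, $u \le M(R_0 + 0; u) \le M(r; u) < c$. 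Lemma~\ref{L4.2} then produces an extension $w_0$, supported in $\{u > c\}$, with $\diver Q(x, Dw_0) \ge g_0$ on $B_R$. On $\{w_0 > 0\}$ we have $u > c \ge 0$, so Step~1 yields $g_0 \ge -\tilde b(x)|Dw_0|^{p-1}$; off this set, $Dw_0 = 0$ and $g_0 = 0$, so the inequality is trivial. Hence $\diver Q(x, Dw_0) + \tilde b(x) |Dw_0|^{p-1} \ge 0$ on all of $B_R$.

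Finally, Proposition~\ref{P4.1} applied to $w_0$ on the smooth bounded domain $B_R$ gives $\esssup_{B_R} w_0 = \esssup_{\partial B_R} w_0 = 0$, because $w_0$ vanishes in a neighborhood of $\partial B_R$. Therefore $u \le c$ almost everywhere on $\Omega_{R_0, r}$, and letting $c \downarrow M(r; u)$ concludes the proof. The most delicate point I anticipate is the trace bound on the inner sphere $S_{R_0}$ when $R_0 > 0$, because $M(R_0 + 0; u)$ is only a one-sided limit; I would handle this by first running the argument on $\Omega_{R_0 + \epsilon, r}$ and then letting $\epsilon \downarrow 0$, which is legitimate thanks to the monotonicity of $M(\cdot;u)$.
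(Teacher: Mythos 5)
Your argument is essentially the paper's: both pass from~\eqref{1.5} and~\eqref{4.2} to a differential inequality of the form $\diver Q(x, Du) + b(|x|)\,|Du|^{p-1} \ge 0$, extend by zero across $\partial\Omega$ via Lemma~\ref{L4.2}, and invoke the maximum principle Proposition~\ref{P4.1}. The one material variation is your choice of $\omega_2 = B_R$ with $\overline{B_r} \subset B_R$, whereas the paper takes $\omega_2$ to be the open annulus $B_{R_0, r}$: with the annulus, $\omega_2 \cap \partial\omega_1$ reduces to $\Gamma_{R_0, r}$ and Lemma~\ref{L4.2} needs only~\eqref{1.3}, so the inner trace on $S_{R_0}$ that worries you never enters the extension step, and the paper can work directly with $u_0 = u\chi_{\{u>0\}}$ instead of $(u-c)_+$ and a limit in $c$. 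Your $\epsilon$-shift and $c\downarrow M(r;u)$ limit are a perfectly sound substitute, but the paper's choice of $\omega_2$ sidesteps both more cleanly.
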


\begin{proof}
Without loss of generality it can be assumed that $R_0 > 0$; otherwise 
we pass in~\eqref{C4.1.1} to the limit as $R_0 \to +0$.
Take some $r \in (R_0, R_1)$.
By~\eqref{1.5} and~\eqref{4.2}, the function $u$ satisfies the inequality
$$
	\diver Q (x, D u) 
	+ 
	b (|x|) 
	|D u|^{p - 1}
	\ge
	0
	\quad
	\mbox{in }
	\Omega_{R_0, r}.
$$
Let us put 
$$
	u_0 (x)
	=
	\left\{
		\begin{array}{ll}
			u (x),
			&
			x \in \omega_0,
			\\
			0,
			&
			x \in B_{R_0, r} \setminus \omega_0,
		\end{array}
	\right.
$$
where 
$
	\omega_0 
	= 
	\{ 
		x \in \Omega_{R_0, r}
		:
		u (x) > 0
	\}
$
and
$
	B_{R_0, r} 
	= 
	\{
		x \in {\mathbb R}^n
		:
		R_0 < |x| < r
	\}.
$
Lemma~\ref{L4.2} obviously implies that
$$
	\diver Q (x, D u_0) 
	+ 
	b (|x|) 
	|D u_0|^{p - 1}
	\ge
	0
	\quad
	\mbox{in }
	B_{R_0, r}.
$$
Thus,
$$
	\esssup
	\left.
		u_0
	\right|_{
		\partial B_{R_0, r}
	}
	=
	\esssup_{
		B_{R_0, r}
	}
	u_0
$$
according to Proposition~\ref{P4.1}.
To complete the proof it remains to notice that
$$
	M (r; u)
	=
	\esssup
	\left.
		u_0
	\right|_{
		\partial B_{R_0, r}
	}
$$
and
$$
ô	\esssup_{
		B_{R_0, r}
	}
	u
	=
	\esssup_{
		B_{R_0, r}
	}
	u_0.
$$
\end{proof}

\begin{Lemma}\label{L4.4}
Let the hypotheses of Proposition~$\ref{P4.3}$ be fulfilled, then
for any $\varepsilon > 0$ there exists a real number $\delta > 0$ 
depending only on $n$, $p$, $C_1$, $C_2$, and $\varepsilon$
such that the relation
$
	\mes
	\{
		x \in B_r^y
		:
		v (x) > 0
	\}
	\le
	\delta
	r^n	
$
implies the estimate
$$
	\esssup_{
		B_{r/8}^y
	}
	v
	\le
	\varepsilon
	\esssup_{
		B_r^y
	}
	v.
$$
\end{Lemma}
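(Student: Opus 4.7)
The plan is to derive Lemma~\ref{L4.4} as an immediate consequence of Moser's inequality (Proposition~\ref{P4.3}), exploiting the smallness of the support of $v$ to make the resulting $L^p$-integral arbitrarily small. Set $M := \esssup_{B_r^y} v$ and assume $M > 0$ (otherwise the conclusion is trivial). Since $0 \le v \le M$ almost everywhere on $B_r^y$ and $v$ vanishes outside the set $E := \{x \in B_r^y : v(x) > 0\}$, the hypothesis $\mes E \le \delta r^n$ yields
$$
	\int_{B_r^y} v^p \, dx
	=
	\int_E v^p \, dx
	\le
	M^p \, \mes E
	\le
	\delta \, r^n \, M^p.
$$

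Next I would apply Proposition~\ref{P4.3} to $v$ on the ball $B_r^y$, which is legitimate under the standing hypotheses of Lemma~\ref{L4.4}. This gives
$$
	\esssup_{B_{r/2}^y} v^p
	\le
	C \, r^{-n} \int_{B_r^y} v^p \, dx
	\le
	C \, \delta \, M^p,
$$
where the constant $C > 0$ depends only on $n$, $p$, $C_1$, and $C_2$. Since $B_{r/8}^y \subset B_{r/2}^y$, taking $p$-th roots yields
$$
	\esssup_{B_{r/8}^y} v
	\le
	(C \delta)^{1/p} \, \esssup_{B_r^y} v.
$$
It then suffices to choose $\delta := \varepsilon^p / C$ to obtain the required inequality.

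There is essentially no obstacle in this argument; the only point to check is the applicability of Proposition~\ref{P4.3}, which is guaranteed by hypothesis. The gap between the radii $r/8$ and $r$ in the statement (rather than $r/2$ and $r$) does not appear essential at this stage and presumably leaves room for iteration in subsequent applications; a single invocation of Moser's inequality is already enough to establish the claim as formulated.
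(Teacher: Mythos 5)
Your proof is correct and considerably simpler than the one in the paper, though it reaches the same conclusion. You pass directly from the elementary bound
$\int_{B_r^y} v^p \, dx \le \delta r^n \esssup_{B_r^y} v^p$
to Moser's inequality (Proposition~\ref{P4.3}) applied on $B_r^y$. The paper's proof instead takes three steps: it first derives a Caccioppoli-type estimate for $\int_{B_{r/2}^y}|Dv|^p\,dx$ in terms of $\int_{B_r^y}v^p\,dx$ by testing the differential inequality against $\eta^p v$; it then applies the measure-theoretic Poincar\'e inequality (Proposition~\ref{P4.2}) on a covering of $B_{r/4}^y$ by cubes of edge length of order $\delta^{1/n}$, obtaining $\int_{B_{r/4}^y}v^p\,dx \le \zeta\,\delta^{p/n}\int_{B_{r/2}^y}|Dv|^p\,dx$; and only then invokes Moser's inequality on $B_{r/4}^y$. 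The paper's route therefore yields a bound of the form $\esssup_{B_{r/8}^y}v \le C\,\delta^{1/n}\esssup_{B_r^y}v$, whereas yours gives $\esssup_{B_{r/8}^y}v \le C\,\delta^{1/p}\esssup_{B_r^y}v$; neither exponent is uniformly sharper, and both vanish as $\delta\to 0$, which is all that the lemma requires. Your approach dispenses with the Caccioppoli and Poincar\'e machinery entirely, and since the paper's own final line also appeals to the bound $v\le\esssup_{B_r^y}v$ (to dominate $\int_{B_1}v^p\,dx$ by $\mes B_1$ under the normalization), nothing is actually gained by the longer route. The nested radii $r/8$, $r/4$, $r/2$, $r$ in the paper's proof are dictated by its three successive localizations; with your one-shot argument the conclusion already holds on $B_{r/2}^y$, and passing to the smaller ball $B_{r/8}^y$ is trivial, as you note.
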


\begin{proof}
In the case of
$
	\esssup\nolimits_{
		B_r^y
	}
	v
	=
	0,
$
Lemma~\ref{L4.4} is trivial.
Without loss of generality it can be assumed that
$
	\esssup\nolimits_{
		B_r^y
	}
	v
	=
	1;
$
otherwise we replace the function $v$ by 
$
	v
	/
	\esssup\nolimits_{
		B_r^y
	}
	v.
$
Also it can be assumed that $r = 1$ and $y = 0$; 
otherwise we use the change of variables.

Take a non-negative function $\eta \in C_0^\infty (B_1)$ such that
$
	\left.
		\eta
	\right|_{
		B_{1/2}
	}
	=
	1.
$
From~\eqref{P4.1.1}, we have
$$
	-
	\int_{
		B_1
	}
	Q (x, D v) 
	D (\eta^p v)
	\,
	dx
	+ 
	\int_{
		B_1
	}
	H (x) 
	|D v|^{p - 1}
	\eta^p v
	\,
	dx
	\ge
	0
$$
or, in other words,
$$
	\int_{
		B_1
	}
	\eta^p
	Q (x, D v) 
	D v
	\,
	dx
	\le
	\int_{
		B_1
	}
	H (x) 
	|D v|^{p - 1}
	\eta^p v
	\,
	dx
	-
	p
	\int_{
		B_1
	}
	\eta^{p-1}
	v
	Q (x, D v) 
	D \eta
	\,
	dx.
$$
Using Young's inequality, one can show that
$$
	\int_{
		B_1
	}
	|H (x)|
	|D v|^{p - 1}
	\eta^p v
	\,
	dx
	\le
	\mu
	\int_{
		B_1
	}
	\eta^p
	|D v|^p
	\,
	dx
	+
	\mu_*
	\int_{
		B_1
	}
	\eta^p
	v^p
	\,
	dx
$$
and
$$
	\int_{
		B_1
	}
	\eta^{p-1}
	v
	Q (x, D v) 
	D \eta
	\,
	dx
	\le
	\mu
	\int_{
		B_1
	}
	\eta^p
	|Q (x, D v)|^{p / (p - 1)}
	\,
	dx
	+
	\mu_*
	\int_{
		B_1
	}
	|D \eta|^p
	v^p
	\,
	dx
$$
for all real numbers $\mu > 0$, where the constant $\mu_* > 0$ 
depends only on $p$ and $\mu$.
On the other hand, in accordance with~\eqref{4.1h} and~\eqref{4.1q}
there are constants $\varkappa > 0$ and $\varkappa_* > 0$ 
depending only on $n$, $p$, $C_1$, and $C_2$ such that
$$
	\varkappa
	\int_{
		B_1
	}
	\eta^p
	|D v|^p
	\,
	dx
	\le
	\int_{
		B_1
	}
	\eta^p
	Q (x, D v) 
	D v
	\,
	dx
$$
and
$$
	\int_{
		B_1
	}
	\eta^p
	|Q (x, D v)|^{p / (p - 1)}
	\,
	dx
	\le
	\varkappa_*
	\int_{
		B_1
	}
	\eta^p
	|D v|^p
	\,
	dx.
$$
Hence, choosing sufficiently small $\mu > 0$, we obtain the estimate
$$
	\int_{
		B_{1/2}
	}
	|D v|^p
	\,
	dx
	\le
	\int_{
		B_1
	}
	\eta^p
	|D v|^p
	\,
	dx
	\le
	\tau
	\int_{
		B_1
	}
	v^p
	\,
	dx,
$$
where the constant $\tau > 0$ depends only on $n$, $p$, $C_1$, and $C_2$.

Let
$
	\mes
	\{
		x \in B_1
		:
		v (x) > 0
	\}
	\le
	\delta
$
for some $\delta > 0$ such that $\delta < 8^{- n} n^{- n / 2}$.
There exists a finit family of the disjoint open cubs 
$J_i$, $i = 1, 2, \ldots, N$, 
with the edge length equal to $2 \delta^{1/n}$ such that
$
	B_{1/4}
	\subset
	\bigcup_{i=1}^N
	\overline{J}_i
	\subset
	B_{1/2}.
$
According to Proposition~\ref{P4.2}, we have
$$
	\int_{J_i}
	v^p
	\,
	dx
	\le
	\zeta
	\delta^{p / n}
	\int_{J_i}
	|D v|^p
	\,
	dx,
	\quad
	i = 1,2,\ldots,N,
$$
where the constant $\zeta > 0$ depends only on $n$ and $p$; 
therefore,
$$
	\int_{
		B_{1/4}
	}
	v^p
	\,
	dx
	\le
	\zeta
	\delta^{p / n}
	\int_{
		B_{1/2}
	}
	|D v|^p
	\,
	dx.
$$
At the same time, from Moser's inequality, it follows that
$$
	\esssup_{
		B_{1/8}
	}
	v^p
	\le
	\theta
	\int_{
		B_{1/4}
	}
	v^p
	\,
	dx,
$$
where the constant $\theta > 0$ depends only on $n$, $p$, $C_1$, and $C_2$.
Thus, we obtain
$$
	\esssup_{
		B_{1/8}
	}
	v^p
	\le
	\tau
	\zeta
	\theta
	\delta^{p / n}
	\int_{
		B_1
	}
	v^p
	\,
	dx
	\le
	\tau
	\zeta
	\theta
	\delta^{p / n}
	\mes B_1.
$$
To complete the proof it remains to take the real number $\delta > 0$
satisfying the condition
$
	\tau
	\zeta
	\theta
	\delta^{p / n}
	\mes B_1
	\le
	\varepsilon^p.
$
\end{proof}

\begin{Lemma}\label{L4.5}
Let the hypotheses of Proposition~$\ref{P4.3}$ be fulfilled, then
\begin{equation}
	\int_{
		B_{r/2}^y
	}
	|H (x)|
	|D v|^{p-1}
	\,
	dx
	\le
	C
	r^{n - p}
	\esssup_{
		B_r^y
	}
	v^{p - 1},
	\label{L4.5.1}
\end{equation}
where the constant $C > 0$ depends only on $n$, $p$, $C_1$, and $C_2$.
\end{Lemma}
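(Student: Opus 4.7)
The plan is to reduce Lemma~\ref{L4.5} to a Caccioppoli-type estimate together with H\"older's inequality. The key input is that condition~\eqref{P4.3.1} allows us to absorb the lower-order term $H|Dv|^{p-1}$ into the leading part of the Caccioppoli argument.

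First I would establish the auxiliary estimate
$$
	\int_{B_{r/2}^y} |Dv|^p\,dx
	\le
	C r^{-p}
	\int_{B_r^y} v^p\,dx
	\le
	C r^{n - p}
	\esssup_{B_r^y} v^p,
$$
with $C$ depending only on $n$, $p$, $C_1$, $C_2$. Choose a cutoff $\eta \in C_0^\infty(B_r^y)$ with $\eta \equiv 1$ on $B_{r/2}^y$ and $|D\eta| \le C/r$, and test inequality~\eqref{P4.1.1} against the non-negative function $\eta^p v$ (this is legitimate since $v \ge 0$ and $v \in W_p^1 \cap L_\infty$). The resulting identity
$$
	\int_{B_r^y} \eta^p\,Q(x,Dv)\,Dv\,dx
	\le
	p \int_{B_r^y} \eta^{p-1} v\,|Q(x,Dv)|\,|D\eta|\,dx
	+
	\int_{B_r^y} |H|\,|Dv|^{p-1}\eta^p v\,dx
$$
is then handled exactly as in the proof of Lemma~\ref{L4.4}: Young's inequality together with~\eqref{4.1h} and~\eqref{4.1q} gives equivalence between $\int \eta^p Q(x,Dv)Dv$ and $\int \eta^p |Dv|^p$, and the factor $\esssup_{B_r^y} r|H| \le 1$ lets us absorb the $H$-term into the main term, yielding the claimed Caccioppoli estimate after using $|D\eta| \le C/r$ on the remaining boundary term.

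Then I would simply combine this with H\"older's inequality (exponents $p/(p-1)$ and $p$) and the assumption $|H| \le 1/r$:
$$
	\int_{B_{r/2}^y} |H|\,|Dv|^{p-1}\,dx
	\le
	\frac{1}{r}\,(\mes B_{r/2}^y)^{1/p}
	\left(
		\int_{B_{r/2}^y} |Dv|^p\,dx
	\right)^{(p-1)/p}.
$$
Substituting the Caccioppoli bound, the exponent of $r$ becomes $-1 + n/p + (n-p)(p-1)/p = n - p$, and the power of the essential supremum becomes $p - 1$. This gives exactly~\eqref{L4.5.1}.

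The only real step with content is the Caccioppoli estimate, and the only delicate point there is the absorption of the $H$-term; that is why condition~\eqref{P4.3.1} is hypothesized in the scale-invariant form $r \esssup |H| \le 1$. Everything else is a scaling of the argument already carried out in the proof of Lemma~\ref{L4.4} followed by a direct H\"older computation, so I do not expect any genuine obstacle beyond bookkeeping of constants.
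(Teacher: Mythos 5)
Your argument is correct, and it takes a genuinely different route from the paper's. Both arguments test the inequality against a cutoff times $v$ and absorb terms by Young's inequality using~\eqref{4.1h}, \eqref{4.1q}, and the scale-invariant hypothesis $r\,\esssup|H|\le 1$; but the paper never writes down the Caccioppoli bound
$\int_{B_{r/2}^y}|Dv|^p\,dx \le C r^{n-p}\esssup_{B_r^y}v^p$
as a standalone step. Instead, it invokes the barrier-type estimate of Lemma~\ref{L4.3} (with $w=v^{p/(p-1)}$, so that $Q(x,Dw)=\bigl(p/(p-1)\bigr)^{p-1}vQ(x,Dv)$) to produce an upper bound on $-r^{p-n}\int Q(x,Dv)D\psi\,v\,dx$ in terms of $\esssup v^p$, then sets $I=\int|H||Dv|^{p-1}\psi\,dx$, feeds the weak form of~\eqref{P4.1.1} into the $\psi$-weighted energy, and arrives at the self-referential inequality $I\le \zeta r^{n/p-1}\bigl(I\esssup v + r^{n-p}\esssup v^p\bigr)^{(p-1)/p}$, which it then solves by a case distinction. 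Your approach is the more elementary one and avoids Lemma~\ref{L4.3} entirely: you first establish the Caccioppoli estimate directly (the same way the paper does in the proof of Lemma~\ref{L4.4}, only with the explicit $|H|\le 1/r$ scaling inserted), and then a single application of H\"older with exponents $p$ and $p/(p-1)$ produces~\eqref{L4.5.1}. The exponent bookkeeping $-1 + n/p + (n-p)(p-1)/p = n-p$ is correct. What the paper's approach buys, if anything, is uniformity with the machinery already set up for Lemma~\ref{L4.3}; what yours buys is transparency and a shorter chain of dependencies. Both give constants depending only on $n$, $p$, $C_1$, $C_2$.
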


\begin{proof}
Applying Lemma~\ref{L4.3} with $w = v^{p / (p - 1)}$, we have
\begin{equation}
	\esssup_{B_r^y} v^p
	\ge
	- \mu
	r^{p - n}
	\int_{B_r^y}
	Q (x, D v)
	D \psi
	v
	\,
	dx
	\label{PL4.5.1}
\end{equation}
for some non-negative function
$
	\psi
	\in
	{
		\stackrel{\rm \scriptscriptstyle o}{W}\!\!{}_p^1 
		(B_r^y)
	}
	\cap
	L_\infty (B_r^y)
$
such that $0 \le \psi \le 1$ almost everywhere on $B_r^y$ and
$\psi = 1$ almost everywhere on $B_{r / 2}^y$,
where the constant $\mu > 0$ depends only on $n$, $p$, $C_1$, and $C_2$.
Put
$$
	I
	=
	\int_{B_r^y}
	|H (x)|
	|D v|^{p-1}
	\psi
	\,
	dx.
$$
By the H\"older inequality,
\begin{eqnarray}
	I
	&
	\le
	&
	\esssup_{B_r^y}
	|H|
	\int_{B_r^y}
	|D v|^{p-1}
	\psi
	\,
	dx
	\nonumber
	\\
	&
	\le
	&
	\esssup_{B_r^y}
	|H|
	\left(
		\int_{B_r^y}
		\psi
		\,
		dx
	\right)^{1/p}
	\left(
		\int_{B_r^y}
		|D v|^p
		\psi
		\,
		dx
	\right)^{( p - 1) / p};
	\nonumber
\end{eqnarray}
therefore, taking into account condition~\eqref{P4.3.1}, we obtain
\begin{equation}
	I
	\le
	r^{n / p - 1}
	(\mes B_1)^{1 / p}
	\left(
		\int_{B_r^y}
		|D v|^p
		\psi
		\,
		dx
	\right)^{( p - 1) / p}.
	\label{PL4.5.2}
\end{equation}
At the same time, relations~\eqref{4.1h} and~\eqref{4.1q} imply the estimate
\begin{equation}
	\varkappa
	\int_{B_r^y}
	|D v|^p
	\psi
	\,
	dx
	\le
	\int_{
		B_r^y
	}
	Q (x, D v) 
	D v
	\psi
	\,
	dx,
	\label{PL4.5.3}
\end{equation}
where the constant $\varkappa > 0$ depends only on $p$, $C_1$, and $C_2$.
Since the function $v$ is a non-negative solution of inequality~\eqref{P4.1.1} in $B_r^y$, 
we have
$$
	- \int_{
		B_r^y
	}
	Q (x, D v) 
	D (\psi v)
	\,
	dx
	+
	\int_{B_r^y}
	H (x)
	|D v|^{p-1}
	\psi
	v
	\,
	dx
	\ge
	0.
$$
Consequently,
\begin{eqnarray}
	\int_{
		B_r^y
	}
	Q (x, D v) 
	D v
	\psi
	\,
	dx
	&
	=
	&
	\int_{
		B_r^y
	}
	Q (x, D v) 
	D (\psi v)
	\,
	dx
	-
	\int_{
		B_r^y
	}
	Q (x, D v) 
	D \psi
	v
	\,
	dx
	\nonumber
	\\
	&
	\le
	&
	\int_{B_r^y}
	H (x)
	|D v|^{p-1}
	\psi
	v
	\,
	dx
	-
	\int_{
		B_r^y
	}
	Q (x, D v) 
	D \psi
	v
	\,
	dx.
	\nonumber
\end{eqnarray}
The last relation and~\eqref{PL4.5.1} allow us to assert that
$$
	\int_{
		B_r^y
	}
	Q (x, D v) 
	D v
	\psi
	\,
	dx
	\le
	I
	\esssup_{
		B_r^y
	}
	v
	+
	\frac{
		r^{n - p}
	}{
		\mu
	}
	\esssup_{
		B_r^y
	}
	v^p.
$$
Combining this with~\eqref{PL4.5.2} and~\eqref{PL4.5.3}, we obtain
\begin{equation}
	I
	\le
	\zeta
	r^{n/p - 1}
	\left(
		I
		\esssup_{
			B_r^y
		}
		v
		+
		r^{n - p}
		\esssup_{
			B_r^y
		}
		v^p
	\right)^{( p - 1) / p},
	\label{PL4.5.4}
\end{equation}
where the constant $\zeta > 0$ depends only on $n$, $p$, $C_1$, and $C_2$.

In the case of
\begin{equation}
	I
	\esssup_{
		B_r^y
	}
	v
	\ge
	r^{n - p}
	\esssup_{
		B_r^y
	}
	v^p,
	\label{PL4.5.5}
\end{equation}
formula~\eqref{PL4.5.4} enables one to establish the validity of the inequality
$$
	I
	\le
	2^{( p - 1) / p}
	\zeta
	r^{n/p - 1}
	\left(
		I
		\esssup_{
			B_r^y
		}
		v
	\right)^{( p - 1) / p}
$$
or, in other words,
$$
	I
	\le
	2^{p - 1}
	\zeta^p
	r^{n - p}
	\esssup_{
		B_r^y
	}
	v^{p - 1},
$$
whence~\eqref{L4.5.1} immediately follows.
On the other hand, if~\eqref{PL4.5.5} does not hold, 
then in accordance with~\eqref{PL4.5.4} we have
$$
	I
	\le
	2^{(p - 1) / p}
	\zeta
	r^{n - p}
	\esssup_{
		B_r^y
	}
	v^{p - 1}.
$$
This also implies~\eqref{L4.5.1}.
The lemma is completely proved.
\end{proof}

\begin{proof}[Proof of Lemma~$\ref{L3.1}$]
Let
$s \in [r_1 / \sigma, \sigma r_0] \cap (R_0, R_1)$,
$
	r 
	=
	\min
	\{
		1 / \lambda,
		(r_1 - r_0) / 4
	\}
$
and, moreover, $N$ be the maximal integer such that $N r < (r_1 - r_0) / 2$.
Put $\rho_i = r_0 + i r$, $i = 1, \ldots, N$. 
For each $i \in \{ 1, \ldots, N \}$ we take a point 
$y_i \in S_{\rho_i} \cap \Omega$
satisfying the condition
\begin{equation}
	\lim_{
		\varepsilon \to +0
	}
	\esssup_{
		B_\varepsilon^{y_i} \cap \Omega
	}
	u
	\ge
	M (\rho_i; u).
	\label{PL3.1.5}
\end{equation}

Assume that
\begin{equation}
	\esssup_{
		B_r^{y_i}
		\cap
		\Omega
	}
	u
	\ge
	(2 - \beta^{1/2})
	M (\rho_i; u)
	\label{PL3.1.1}
\end{equation}
for some $i \in \{ 1, \ldots, N \}$.
We denote
$
	\omega_i
	=
	\{
		x \in B_r^{y_i} \cap \Omega
		:
		u (x) > \beta^{1/2} M (\rho_i; u)
	\}
$
and
$$
	v_i (x)
	= 
	\left\{
		\begin{array}{ll}
			u (x) - \beta^{1/2} M (\rho_i; u),
			&
			x \in \omega_i,
			\\
			0,
			&
			x \in  B_r^{y_i} \setminus \omega_i.
		\end{array}
	\right.
$$
From Lemma~\ref{L4.2} and relation~\eqref{4.2}, it follows that
\begin{equation}
	\diver Q (x, D v_i)
	\ge
	F (x, u, D v_i)
	\chi_{\omega_i} (x)
	\quad
	\mbox{in }
	B_r^{y_i},
	\label{PL3.1.8}
\end{equation}
where $\chi_{\omega_i}$ is the characteristic function of $\omega_i$.
Therefore, in accordance with~\eqref{1.5} we obtain
\begin{equation}
	\diver Q (x, D v_i)
	+
	\lambda
	|D v_i|^{p - 1}
	\ge
	f (s, \beta^{1/2} M (\rho_i; u))
	\chi_{\omega_i} (x)
	\quad
	\mbox{in }
	B_r^{y_i}.
	\label{PL3.1.2}
\end{equation}

Since $\lambda r \le 1$, Lemma~\ref{L4.5} implies the inequality
\begin{equation}
	\mu
	r^{n - p}
	\esssup_{
		B_r^{y_i}
	}
	v_i^{p - 1}
	\ge
	\lambda
	\int_{
		B_{r/2}^{y_i}
	}
	|D v_i|^{p-1}
	\,
	dx.
	\label{PL3.1.3}
\end{equation}
where the constant $\mu > 0$ depends only on $n$, $p$, $C_1$, and $C_2$.
At the same time, by Lemma~\ref{L4.3}, there exists a function 
$
	\psi_i
	\in
	{
		\stackrel{\rm \scriptscriptstyle o}{W}\!\!{}_p^1 
		(B_{r/2}^{y_i})
	}
	\cap
	L_\infty (B_{r/2}^{y_i})
$
such that
$0 \le \psi_i \le 1$ almost everywhere on $B_{r/2}^{y_i}$,
$\psi_i = 1$ almost everywhere on $B_{r / 4}^{y_i}$ and, moreover,
\begin{equation}
	\esssup_{
		B_{r/2}^{y_i}
	} 
	v_i^{p-1}
	\ge
	-
	\nu
	r^{p - n}
	\int_{
		B_{r/2}^{y_i}
	}
	Q (x, D v_i)
	D \psi_i
	\,
	dx,
	\label{PL3.1.4}
\end{equation}
where the constant $\nu > 0$ depends only on $n$, $p$, $C_1$, and $C_2$.
According to~\eqref{PL3.1.2}, we have
$$
	- \int_{
		B_{r/2}^{y_i}
	}
	Q (x, D v_i)
	D \psi_i
	\,
	dx
	+
	\lambda
	\int_{
		B_{r/2}^{y_i}
	}
	|D v_i|^{p - 1}
	\psi_i
	\,
	dx
	\ge
	f (s, \beta^{1/2} M (\rho_i; u))
	\int_{
		\omega_i 
		\cap 
		B_{r/2}^{y_i}
	}
	\psi_i
	\,
	dx.
$$
Combining the last relation with~\eqref{PL3.1.3} and \eqref{PL3.1.4}, 
one can show that
\begin{equation}
	\esssup_{
		B_r^{y_i}
	} 
	v_i^{p-1}
	\ge
	\varkappa
	r^{p - n}
	\mes (\omega_i \cap B_{r/4}^{y_i})
	f (s, \beta^{1/2} M (\rho_i; u)),
	\label{PL3.1.6}
\end{equation}
where the constant $\varkappa > 0$ depends only on $n$, $p$, $C_1$, and $C_2$.
By condition~\eqref{PL3.1.5},
$$
	\lim_{
		\varepsilon \to +0
	}
	\esssup_{
		B_\varepsilon^{y_i}
	}
	v_i
	\ge
	(1 - \beta^{1/2})
	M (\rho_i; u)
	>
	0.
$$
Since 
$M (\rho_i; u) \ge M (r_0; u) \ge \beta^{1/2} M (r_1; u)$
and
$$
	M (r_1; u)
	=
	\esssup_{
		\Omega_{R_0, r_1}
	}
	u
	\ge
	\esssup_{
		B_r^{y_i}
	}
	v_i,
$$
this implies the estimate
$$
	\lim_{
		\varepsilon \to +0
	}
	\esssup_{
		B_\varepsilon^{y_i}
	}
	v_i
	\ge
	(1 - \beta^{1/2})
	\beta^{1/2}
	\esssup_{
		B_r^{y_i}
	}
	v_i.
$$
Therefore, by Lemma~\ref{L4.4}, there exists a real number $\delta > 0$
depending only on $n$, $p$, $C_1$, $C_2$, and $\beta$ such that
$\mes (\omega_i \cap B_{r/4}^{y_i}) \ge \delta r^n$.
At the same time, taking into account~\eqref{PL3.1.1}, we obtain
$$
	\esssup_{
		B_r^{y_i}
		\cap
		\Omega
	}
	u
	-
	M (\rho_i; u)
	\ge
	(1 - \beta^{1/2})
	M (\rho_i; u)
	\ge
	(1 - \beta^{1/2})
	\beta^{1/2}
	\esssup_{
		B_r^{y_i}
	} 
	v_i.
$$
Thus, formula~\eqref{PL3.1.6} enable us to assert that
\begin{equation}
	\esssup_{
		B_r^{y_i}
		\cap
		\Omega
	}
	u
	-
	M (\rho_i; u)
	\ge
	\tau
	r^{p / (p - 1)}
	f^{1 / (p-1)} (s, \beta^{1/2} M (\rho_i; u)),
	\label{PL3.1.7}
\end{equation}
where the constant $\tau > 0$ depends only on $n$, $p$, $C_1$, $C_2$, and $\beta$.
Now, assume that~\eqref{PL3.1.1} is not valid,
then there exists a real number $\zeta > 0$ for which
\begin{equation}
	\esssup_{
		B_r^{y_i}
		\cap
		\Omega
	}
	u
	+
	\zeta
	<
	(2 - \beta^{1/2})
	M (\rho_i; u).
	\label{PL3.1.9}
\end{equation}
We denote
$
	\omega_i
	=
	\{
		x \in B_r^{y_i} \cap \Omega
		:
		u (x) 
		> 
		2 
		M (\rho_i; u)
		- 
		\esssup\nolimits_{
			B_r^{y_i}
			\cap
			\Omega
		}
		u
		-
		\zeta
	\}
$
and
$$
	v_i (x)
	=
	\left\{
		\begin{array}{ll}
			u (x)
			-
			2 
			M (\rho_i; u) 
			+ 
			\esssup\nolimits_{
				B_r^{y_i}
				\cap
				\Omega
			}
			u
			+
			\zeta,
			&
			x \in \omega_i
			\\
			0,
			&
			x \in B_r^{y_i} \setminus \omega_i.
		\end{array}
	\right.
$$
As above, the function $v_i$ satisfies inequality~\eqref{PL3.1.8}.
From~\eqref{PL3.1.9}, it follows that
$
	u (x) > \beta^{1/2} M (\rho_i; u)
$
for all $x \in \omega_i$.
Hence, in accordance with~\eqref{1.5} the function $v_i$ also satisfies 
inequality~\eqref{PL3.1.2}.
Consequently, repeating the previous arguments, we again obtain~\eqref{PL3.1.6}.
Further, it presents no special problems to verify that 
$$
	\esssup_{
		B_r^{y_i}
	} 
	v_i
	=
	2
	\left(
		\esssup_{
			B_r^{y_i}
			\cap
			\Omega
		}
		u
		-
		M (\rho_i; u)
	\right)
	+
	\zeta.
$$
In addition, formula~\eqref{PL3.1.5} implies the estimate
$$
	\lim_{\varepsilon \to +0}
	\esssup_{
		B_\varepsilon^{y_i}
	} 
	v_i
	\ge
	\esssup_{
		B_r^{y_i}
		\cap
		\Omega
	}
	u
	-
	M (\rho_i; u)
	+
	\zeta
	>
	0.
$$
Therefore, we have
$$
	\lim_{\varepsilon \to +0}
	\esssup_{
		B_\varepsilon^{y_i}
	} 
	v_i
	\ge
	\frac{1}{2}
	\esssup_{
		B_r^{y_i}
	}
	v_i
	>
	0
$$
and Lemma~\ref{L4.4} enables us to assert that
$\mes (\omega_i \cap B_{r/4}^{y_i}) \ge \delta r^n$,
where the constant $\delta > 0$ depends only on $n$, $p$, $C_1$, and $C_2$.
Thus, 
$$
	2
	\left(
		\esssup_{
			B_r^{y_i}
			\cap
			\Omega
		}
		u
		-
		M (\rho_i; u)
	\right)
	+
	\zeta
	\ge
	(\varkappa \delta)^{1 / (p  -1)}
	r^{p / (p - 1)}
	f^{1 / (p  -1)} (s, \beta^{1/2} M (\rho_i; u))
$$
by inequality~\eqref{PL3.1.6}.
Finally, passing to the limit in the last expression as $\zeta \to +0$,
we derive~\eqref{PL3.1.7} once more.

Since
$$
	M (r_1; u) - M (r_0; u)
	\ge
	\esssup_{
		B_r^{y_1}
		\cap
		\Omega
	}
	u
	-
	M (\rho_1; u)
$$
and $M (\rho_1; u) \ge M (r_0; u) \ge \beta^{1/2} M (r_1; u)$,
relation~\eqref{PL3.1.7} with $i = 1$ proves the lemma in the case of 
$r = (r_1 - r_0) / 4$.
For $r = 1 / \lambda$, using~\eqref{PL3.1.7}, we obtain
$$
	\sum_{i=1}^N
	\left(
		\esssup_{
			B_r^{y_i}
			\cap
			\Omega
		}
		u
		-
		M (\rho_i; u)
	\right)
	\ge
	\tau
	N 
	r
	\lambda^{- 1 / (p - 1)}
	f^{1 / (p-1)} (s, \beta M (r_1; u)),
$$
whence in accordance with the inequalities 
$N r \ge (r_1 - r_0) / 4$,
$$
	M (\rho_{i + 1}; u)
	\ge
	\esssup_{
		B_r^{y_i}
		\cap
		\Omega
	}
	u,
	\quad
	i = 1, \ldots, N - 1,
$$
and
$$
	M (r_1; u)
	\ge
	\esssup_{
		B_r^{y_N}
		\cap
		\Omega
	}
	u
$$
it follows that
$$
	M(r_1; u) - M (\rho_1; u)
	\ge
	\frac{1}{4}
	\tau
	(r_1 - r_0)
	\lambda^{- 1 / (p - 1)}
	f^{1 / (p-1)} (s, \beta M (r_1; u)).
$$
Lemma~\ref{L3.1} is completely proved.
\end{proof}

\end{document}